\documentclass[a4paper,12pt]{article}
 \usepackage{amssymb,amsthm,amsmath,amsfonts}
\usepackage[left=2.5cm,right=2.5cm,top=2cm,bottom=2cm]{geometry}
\usepackage{cite}
 \usepackage{color}
\newtheorem{thm}{Theorem}[section]
\newtheorem{cor}[thm]{Corollary}
\newtheorem{lem}[thm]{Lemma}
\newtheorem{prop}[thm]{Proposition}
\theoremstyle{definition}
\newtheorem{defn}[thm]{Definition}
\theoremstyle{remark}
\newtheorem{rem}[thm]{Remark}
\newtheorem*{ex}{Example}
\numberwithin{equation}{section}               
\begin{document}

\date{}
\title{Some global operators and  the material derivative}
\author{J. O. Gonz\'alez-Cervantes$^{(1)}$, D. Gonz\'alez-Campos$^{(1)}$,\\ Juan Bory-Reyes$^{(2)}$}
\vskip 1truecm

\date{\small $^{(1)}$ Departamento de Matem\'aticas, ESFM-Instituto Polit\'ecnico Nacional. 07338, Ciudad M\'exico, M\'exico\\ Email: jogc200678@gmail.com, daniel\_uz13@hotmail.com\\ $^{(2)}$ {SEPI, ESIME-Zacatenco-Instituto Polit\'ecnico Nacional. 07338, Ciudad M\'exico, M\'exico}\\Email: juanboryreyes@yahoo.com}

\maketitle
\begin{abstract} 
\noindent
  
  The theory of the operator   
$$G(x) = |\underline{x}|^2  \frac{\partial }{\partial x_0} + \underline{x}  \sum_{j=1}^n x_j  \frac{\partial }{\partial  x_j} $$ 
   is deeply associated with the  slice monogenic function theory  and   has grown in recent years. In particular,  for $n=3$ the quaternionic version of $G$ has been recently used to study the quaternionic slice regular function theory.

This work extends the study of the $G$ operator in two senses:

a) Clifford's analysis structure. The function theory induced by   the operator 
\begin{align*}
	\mathcal H_a (x)  = {\underline a} ( {x})  \frac{\partial  }{\partial x_0} - \sum_{i=1}^n \left( \sum_{j=1}^n a_j ( {x}) \frac{\partial (a^{-1})_i}{\partial  y_j}\circ  a ( {x}) \right) \frac{\partial}{\partial x_i},  
\end{align*} 
 where $a$ is a function with certain properties with domain  in $\mathbb R^{n+1}$ is presented extending the already known results of the $G$ (if $a=I$ is the identity function   
 we can see that  $G=\mathcal H_I$) such as Splitting  Lemma,  Representations Theorem,  Cauchy formula,  a characterization of the zero sets of $H_a$ and  
 a development in  power series.  
 
Also  some properties  of  the  material derivative:
\begin{align*}
{D}_u =  \frac{\partial }{\partial x_0} + \sum_{j=1}^n u_j \frac{\partial  }{\partial x_j}, 
\end{align*}
where  $u $ is a certain $\mathbb R^{n}$-valued function  with domains in $\mathbb R^{n}$, or $\mathbb R^{n+1}$, are presented  as   consequences of function theory induced by      $\mathcal H_a$.

	b) Structure of quaternionic analysis. In particular, the case $n=3$  is approached from the point of view of quaternionic analysis presenting results such as  Splitting  Lemma,  Representations Theorem,   Borel-Pompeiu, Stokes and  Cauchy formulas, a  conformal covariant property  and  a characterization of the zero sets of the quaternonic version of $\mathcal H_a$ and its consequences for $D_u$.

 \end{abstract}

\vspace{0.3 cm}

\small
\noindent
{\textbf{Keywords.} Nonconstant coefficient differential operator, Clifford and Quaternionic analysis, Borel-Pompeiu and Cauchy formulas, Conformal covariant property, material derivatives.\\
\noindent
\textbf{Mathematics Subject Classification (2020).} Primary 47S05. Secondary 46S05, 30G35}  

\section{Introduction}

The properties of the  operator 
$$G(x) = |\underline{x}|^2  \frac{\partial }{\partial x_0} +  \underline{x}  \sum_{j=1}^n x_j  \frac{\partial }{\partial  x_j},$$
where $\underline{x}$ is the 1-vector part of the paravector $x = x_0 + \underline{x}$ and $n \in \mathbb N$ are well known in the theory of Clifford analysis, particularly   in
  the theory of slice monogenic functions,    
see  \cite{A1, A2, A3, CS, CZ, GlobalOp, GP}.  Despite the fact that $G$ has nonconstant coefficients, the kernel of this operator is associated to two different definitions of slice monogenic functions that turn out to be equivalent under suitable conditions on the domains on which they are defined.

The   material derivative 
\begin{align*}
{D}_u =  \frac{\partial }{\partial x_0} + \sum_{j=1}^n u_j \frac{\partial  }{\partial x_j}, 
\end{align*}
where  $u $ is a certain $\mathbb R^{n}$-valued function  with domains in $\mathbb R^{n}$, or $\mathbb R^{n+1}$,  is the rate of change over time of a tensor field  $\displaystyle y$ (temperature, velocity, stress tensor, or other physical characteristic) with the sense that it depends only on position and time coordinates; see \cite{B,PK,Z}.
 The multidimensional material derivative has many applications, see \cite{F}. Among some important applications of the material derivative are $(n+1)$-incompressible Navier-Stokes equation system: 
\begin{align*}
	& {  D}_u(u) - \alpha \sum_{j=1}^n   \frac{\partial^2 u}{\partial x_j^2} = - \nabla p+ f(x, t).
\end{align*}
Or equivalently, 
\begin{align*}
	(1  +{\underline a} )   \frac{\partial u}{\partial x_0}    
	- H_a(u)  - \alpha \sum_{j=1}^n   \frac{\partial^2 u}{\partial x_j^2 }  =-  \nabla p+ f(x, t),\end{align*}
where 
\begin{align*}
	H_a (x)  = & {\underline a} (\underline{x})  \frac{\partial  }{\partial x_0} - \sum_{i=1}^n \left( \sum_{j=1}^n a_j (\underline{x}) \frac{\partial (a^{-1})_i}{\partial  y_j}\circ  a (\underline{x}) \right) \frac{\partial}{\partial x_i},  \\
	{  {D}_u f} =&  (1  +{\underline a}) {\frac {\partial f}{\partial y_0}} - H_a(f),
	\end{align*}
and $a$  is a bijective function   with   specific properties that will be given in this work. 
Although applications of the $n-$dimensional incompressible Navier-Stokes equations for $n>3$ are more common in mathematics than in other areas of science, for $n\leq 3$, there are  some applications of these equations in a wide range of engineering and scientific fields, such as aerodynamics, weather forecasting, ocean fluid mechanics, and energy system design (see \cite{Bat, OHJ, GR} and the references given there).

Another interesting study is obtained in case $n=3$ in which  the theory of quaternionic right-linear operator     
$$G(x) = |\underline{x}|^2  \frac{\partial }{\partial x_0} +  \underline{x}  \sum_{j=1}^3 x_j  \frac{\partial }{\partial  x_j} ,$$
where $\underline{x}$ is  3-dimensional  vector part  and   $x = x_0 + \underline{x}\in \mathbb H$, 
is associated with the quaternionic slice regular function theory, see  \cite{G, GG1, GG2, GPG}. In addition,  an important  application of the $4-$dimensional material derivative occurs in Euler equations of gas-dynamics:
\begin{align*}
	\frac{\partial \rho }{\partial t}   + \nabla\cdot  (\rho u) =& 0 ,\\
	\rho   {  D}_u  ( u)  + \nabla p  =& \rho a ,\\
	{  D}_u (p)  + \gamma_p ( \nabla \cdot u ) =& 0 , 
\end{align*}
see \cite{CC} for more details.

 In general, this work  introduces the theory of a  new family of  operators
  extending     the already known theory of  $G$ in Clifford and   quaternionic analysis  and  shows new properties of the material derivative.

This paper presents some extension  of  Splitting Lemma,  of Representation theorem,  of power series development, of Cauchy formula and a characterizations of   $\textrm{Ker}(H_a)$ and its consequences  for $ D_u)$  in Clifford analysis framework. 
  Meanwhile,  
 some extensions of Borel-Pompeiu, Stokes, and Cauchy formulas, of power series development, of a conformal covariant property associated to the operators $H_a$ and ${D}_u$ and   characterizations of   $\textrm{Ker}(H_a)$ and its consequences for $D_u)$ in the  quaternionic analysis framework  are also presented by this work.

This paper is organized as follows: Subsection \ref{SMT} presents some basic properties  of  the slice monogenic functions and  results about the operator $G$ in Clifford analysis. 
Subsection \ref{QSRFT} shows some results of the quaternionic slice regular function theory and 
 properties of the quaternionic operator $G$. Section \ref{MR} 
   presents the main results grouped into two subsections:   
 \ref{CLF} and \ref{QFR} which are  the 
 Clifford   and  quaternionic analysis frameworks, respectively.

\section{Preliminaries}\label{prel}
\subsection{Slice monogenic function theory}\label{SMT}

In this subsection we remember  some results  presented in   \cite{GlobalOp, CSS9}.

The real Clifford algebra over $n$ is denoted by $\mathbb R_{0,n}$. Their  imaginary units are $e_1, \dots , e_n$ 
satisfy   $e_i e_j +e_j e_i = -2\delta_{ij}$.
An element of $\mathbb R_{0,n}$ is given by 
$\sum_{A}e_ A x _A$, where $A = i_1\cdots i_r$,  $i\in \{1, 2, \dots , n\}$, $ i_1 < \cdots < i_r$
is a multi-index, $e_A = e_{i_1}e_{i_2} \cdots e_{i_r}$, $e_{\emptyset} = 1$ and  $x_A \in\mathbb R$. Some specific elements
in $\mathbb R_{0,n}$ are identified with vectors in the Euclidean space $\mathbb R^n$:  
$(x_1, x_2, . . . , x_n) \in \mathbb R^n$  is identified with a so-called 1-vector  $\underline x = x_1e_1 + \cdots + x_ne_n$
in the Clifford
algebra.

The element $(x_0, x_1, . . . , x_n) \in\mathbb R^{n+1} $  will be identified with 
the  paravector
$x = x_0 + \underline{x} = x_0 +\sum_{j=1}^n x_je_j$.  
The norm and conjugation of $x \in \mathbb  R^{n+1}$ are defined as
$|x|^2 = x^2_0 + x^2_1 + \cdots  + x^2_n$ and 
$\overline{x} = x_0 -  \underline{x}$, respectively. The real part of $x$ is   $Re[x]=x_0$. A
function $f : U \subset \mathbb  R^{n+1}\to \mathbb R_{0,n}$
is treated as a function $f(x)$ of the paravector $x$.
The symbol $\mathbb S$ will denote the sphere of unit $1$-vectors in $\mathbb R^n$, i.e,  
$\mathbb S = \{\underline x = e_1x_1 + \cdots + e_nx_n \ \mid \ x^2_1
+ \cdots + x^2_n = 1\}$, or equivalently, the   $(n-1)$-dimensional sphere in $\mathbb  R^{n+1}$ lying on the hyperplane $x_0 = 0$ 
that can be identified with the Euclidean space $\mathbb R^n$. Given $I\in \mathbb S$, then the vector space $\mathbb R+I\mathbb R $ will be denoted by $\mathbb C_I$ and  an element belonging to $\mathbb C_I$ will be denoted by $u+Iv$, for $u, v \in\mathbb  R$. Observe that $\mathbb C_I$ equipped with the inherited operations from $\mathbb R_{0,n}$ is isomorphic to $\mathbb C$ as fields, since $I^2=-1$ for every $I \in \mathbb S$.

Given a paravector $x = x_0 + \underline{x} \in \mathbb  R^{n+1}$ let us set
$$ I_x =\left\{ \begin{array}{l}  \frac{ \underline{x} }{ | \underline{x} |} ,  \ \  \underline{x} 
	\neq 0, \\
	\textrm{any element of $\mathbb S$ otherwise},
\end{array}  \right.$$
so, by definition, we have $x = x_0 + I_x | \underline{x}| \in \mathbb C_{I_x}$. Therefore, 
$\mathbb R^{n+1} = \bigcup_{I\in \mathbb S} \mathbb C_I$.

\begin{defn}  (Slice monogenic functions). Let $U \subset \mathbb R^{n+1}$ be an open set and
	$f : U\to \mathbb R_{0,n}$ be a real differentiable function. Let $I \in \mathbb  S$ denote 
	$ U_I = \mathbb C_I \cap U$. Then we say that $f$ is a   slice monogenic function,
	or s-monogenic function, if for every $I\in\mathbb S$, we have
	$$
	\left( \frac{\partial} 
	{\partial u}+ I \frac{\partial} 
	{\partial v} \right) f \mid_{U_I} (u+Iv) = 0, \quad  \forall   u+Iv \in  U_I.$$
\end{defn}
Let $\mathcal{SM}(U)$ denotes the set of slice monogenic functions defined on $U$.

\begin{defn} (Slice domains). Let $U\subset\mathbb  R ^{n+1}$ be a domain. We say that $U$ is a slice domain (s-domain for short) if $U \cap \mathbb  R$ is nonempty and if $\mathbb C_I \cap  U$ is a domain in $\mathbb C_I$ for all $I \in \mathbb S$.
\end{defn}

\begin{defn} (Axially symmetric domains). Let $U\subset\mathbb  R ^{n+1}$. We say that $U$ is
	axially symmetric if, for all $u+Iv \in U$, the whole $(n-1)$-sphere 
	$$[u+Iv]:=\{u+Jv \ \mid \ J\in \mathbb S\} \subset U.$$
\end{defn}

\begin{defn} (Holomorphic maps of a paravector variable). Let $U\subset\mathbb  R ^{n+1}$  
	an axially symmetric open set and $\mathcal U\subset \mathbb R \times \mathbb R$ be such that $x = u + Iv \in  U$ for
	all $(u, v) \in \mathcal  U$. The  holomorphic maps of a paravector variable defined on
	$U$ are of the form $f(x) = \alpha(u, v) + I\beta (u, v)$,  where the $\mathbb R^n$-valued differentiable
	functions  $\alpha, \beta$ are such that $\alpha (u, v) = \alpha(u,-v)$,
	$\beta(u, v) = - \beta (u,-v)$ for all $(u, v) \in \mathcal U$ and   Cauchy-Riemann system
	\begin{align*}
		\frac{\partial \alpha}{\partial u}- \frac{\partial \beta}{\partial v} = 0, \quad 
		\frac{\partial \alpha}{\partial v}+ \frac{\partial \beta}{\partial u} = 0 
	\end{align*}
	  is satisfied.
	  
	  By  $\mathcal{HM}(U)$ mean  the set of holomorphic maps of a paravector variable on   $U$.

	\end{defn}

\begin{thm} \cite{CSS9} Let $U \subset \mathbb  R^{n+1}$ be an axially symmetric s-domain and 
	$f : U \to \mathbb R^{n+1}$ be a real differentiable function. Then 
	$f\in\mathcal { HM}(U)$ if and only if $f \in \mathcal{SM}(U)$.
\end{thm}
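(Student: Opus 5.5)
The plan is to prove the two inclusions separately. The key tool for the hard direction is the representation (structure) formula for slice monogenic functions on axially symmetric s-domains, which I will derive from the identity principle.

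\emph{$\mathcal{HM}(U)\subset\mathcal{SM}(U)$.} Let $f(x)=\alpha(u,v)+I\beta(u,v)$ with the stated parity conditions and the Cauchy--Riemann system. The parity conditions ensure that $f$ is well defined on $U$. For $x$ on the real axis ($v=0$), oddness gives $\beta(u,0)=0$, so the choice of $I$ is irrelevant there. For $x=u+Iv=u+(-I)(-v)$, the even/odd symmetry gives the same value with either representation.

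Fix $J\in\mathbb S$ and restrict to $U_J$, where $f(u+Jv)=\alpha(u,v)+J\beta(u,v)$. Here $\alpha$ and $\beta$ take values in $\mathbb R_{0,n}$ and act as coefficients. A direct computation gives
\[
\Bigl(\frac{\partial}{\partial u}+J\frac{\partial}{\partial v}\Bigr)f = \Bigl(\frac{\partial \alpha}{\partial u}-\frac{\partial\beta}{\partial v}\Bigr)+J\Bigl(\frac{\partial \alpha}{\partial v}+\frac{\partial\beta}{\partial u}\Bigr),
\]
using $J^2=-1$. Both brackets vanish by the Cauchy--Riemann system, so $f\in\mathcal{SM}(U)$. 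Some care is needed about whether $J$ multiplies on the left and whether it commutes with the values of $\alpha$ and $\beta$. I will follow the convention of \cite{CSS9}, where $J$ stands on the left and the operator acts on the left, so no commutation is required.

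\emph{$\mathcal{SM}(U)\subset\mathcal{HM}(U)$.} This is the main step. Let $f\in\mathcal{SM}(U)$ and fix $I\in\mathbb S$. For $x=u+Jv\in U$, define
\[
\alpha(u,v)=\tfrac12\bigl(f(u+Iv)+f(u-Iv)\bigr),\qquad \beta(u,v)=\tfrac12 I\bigl(f(u-Iv)-f(u+Iv)\bigr).
\]
Both are well defined because $U$ is axially symmetric. The parity conditions $\alpha(u,-v)=\alpha(u,v)$ and $\beta(u,-v)=-\beta(u,v)$ are immediate. The Cauchy--Riemann system follows by differentiating, using the fact that $f|_{U_I}$ is in the kernel of $\partial_u+I\partial_v$ and that $v\mapsto f(u-Iv)$ satisfies the conjugate relation.

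It remains to show that $f(u+Jv)=\alpha(u,v)+J\beta(u,v)$ for every $J\in\mathbb S$, and this is the obstacle. Set $g(u+Jv)=\alpha(u,v)+J\beta(u,v)$. By the first inclusion, $g|_{U_J}$ is slice monogenic, and it agrees with $f$ on $U\cap\mathbb R$: at $v=0$ we get $\alpha(u,0)=f(u)$ and $\beta(u,0)=0$. Then $f|_{U_J}-g|_{U_J}$ is $J$-holomorphic on the domain $U_J$, which is connected because $U$ is an s-domain. It vanishes on $U\cap\mathbb R$, a nonempty open subset of the real line.

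To apply the identity principle, I split the values along a basis: choose $K$ anticommuting with $J$, so that $\mathbb R_{0,n}$ decomposes as a direct sum of copies of $\mathbb C_J$. In that decomposition, $f-g$ becomes finitely many classical holomorphic functions on $U_J$, each vanishing on a real segment. The classical identity principle then forces $f=g$ on $U_J$. Since $J$ was arbitrary, $f=\alpha+I_x\beta$ on all of $U$, so $f\in\mathcal{HM}(U)$.

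The delicate points are well-definedness on the real axis, which is handled by the oddness of $\beta$, and the use of the s-domain hypothesis, which is exactly what the identity principle needs.
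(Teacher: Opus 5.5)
The paper gives no proof of this statement; it quotes it from \cite{CSS9}, so there is no internal argument to compare against. Your proof is correct, and it is the standard route.

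The easy direction is the direct computation $(\partial_u+J\partial_v)(\alpha+J\beta)=(\alpha_u-\beta_v)+J(\alpha_v+\beta_u)$. In the hard direction, the identity you establish is exactly the Representation formula (Theorem \ref{Repth}, also quoted in the paper), since $\alpha(u,v)+J\beta(u,v)=\tfrac12(1-JI)f(u+Iv)+\tfrac12(1+JI)f(u-Iv)$. If that theorem is taken as known, the hard direction reduces to reading off $\alpha,\beta$ and checking the Cauchy--Riemann system, which you did correctly. Your version is self-contained because it re-derives the formula through the identity principle on each slice $U_J$.

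One small correction to the splitting step. A single $K$ anticommuting with $J$ gives $\mathbb R_{0,n}=\mathbb C_J\oplus\mathbb C_J K$ only for $n=2$; for $n=1$ no splitting is needed. For $n\ge 3$, complete $J=I_1$ to $I_1,\dots,I_n$ and use the $2^{n-1}$ products $I_A$, $A\subset\{2,\dots,n\}$, as in Lemma \ref{Split}. Alternatively, any basis of $\mathbb R_{0,n}$ as a left $\mathbb C_J$-vector space will do. Your componentwise identity-principle argument then goes through unchanged.
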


\begin{defn} Let $U \subset \mathbb  R^{n+1}$ be an open set and $f\in C^1(U, R^n)$. Define
	$$ G [f] (x) = |\underline{x}|^2  \frac{\partial f}{\partial x_0} (x)+  \underline{x}  \sum_{j=1}^n x_j  \frac{\partial f}{\partial  x_j} (x), \quad \forall x\in U.$$
 
 Denote $\mathcal {GM}(U) = C^1(U,R^n) \cap \textrm{Ker}(G)$. Moreover, we set 
$$\mathcal {GM}_p(U) := \{f \in \mathcal{GM}(U) \ \mid \  \alpha	\ \textrm{ and  } \  \beta  \ \textrm{  do not depend	on} \  I \in \mathbb S\},$$ 
where 
$$\alpha  (u,v):= \frac{1}{2} (f(u + Iv) + f(u - Iv)), \quad \beta (u,v) := \frac{I}{2} (f(u - Iv) - f(u + Iv))$$
and $x = u + Iv$, $u = \textrm{Re}(x)$, $v = |x|$.
\end{defn}	

\begin{rem} Given  an open set  $U \subset \mathbb  R^{n+1}$   and  $f\in C^1(U, R^n)$ the right version of $G$:
	$$ G _r [f] (x) =  |\underline{x}|^2\frac{\partial f}{\partial x_0} (x)  +   \sum_{j=1}^n x_j \frac{\partial f}{\partial  x_j} (x)   \underline{x} , \quad \forall x\in U,$$
	   induces a function theory profoundly similar to that induced by $ G$, see \cite{GlobalOp}.
\end{rem}

\begin{thm}\cite{GlobalOp} \label{teoHGM} \ {} 
	\begin{enumerate}
		\item  If $U\subset \mathbb R^{n+1}$ is an axially symmetric s-domain and $
		f \in \mathcal { SM}(U)$ then $f \in  \mathcal {GM}(U)$.
		
		\item  If $U\subset \mathbb R^{n+1}$ is an axially symmetric domain and 
		$f\in \mathcal{ HM}(U)$ then
		$f \in\mathcal{ GM}(U)$.
		\item Let $U\subset \mathbb R^{n+1}$ be an axially symmetric domain that does not intersect
		the real axis and let $f : U\to \mathbb R_{0,n}$ be a real differentiable function.  Then
		$f \in \mathcal {HM}(U)$ if and only if $f \in \mathcal {GM}_p(U)$.
	\end{enumerate}
\end{thm}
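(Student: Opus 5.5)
Write each point off the real axis as $x=u+Iv$ with $u=x_0$, $v=|\underline x|>0$ and $I=I_x=\underline x/|\underline x|$, so that $\underline x = Iv$. The plan rests on one identity: the operator $G$ factors through the slice Cauchy--Riemann operator. Since $\sum_{j=1}^n x_j\,\partial_{x_j}$ is the Euler operator in the $\underline x$ variables, for $f$ of class $C^1$ we have
$$\sum_{j=1}^n x_j \frac{\partial f}{\partial x_j}(x) = v\,\frac{\partial}{\partial v} f(u+Iv)$$
with $I$ held fixed. Consequently
$$G[f](x)=v^2\frac{\partial f}{\partial u}+ Iv\cdot v\frac{\partial f}{\partial v} = |\underline x|^2\Big(\frac{\partial}{\partial u}+I\frac{\partial}{\partial v}\Big)f|_{U_I}(u+Iv).$$
On $\mathbb R$ itself the factor $|\underline x|^2$ and the vector $\underline x$ vanish, so $G[f]=0$ there automatically. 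Hence, on any open $U$, $G[f]\equiv 0$ if and only if $f|_{U_I}$ satisfies the slice Cauchy--Riemann equation at every non-real point of $U_I$, for every $I$.

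For item 1, take $f\in\mathcal{SM}(U)$. The Cauchy--Riemann equation holds on every slice, and by the identity $G[f]=0$ off the real axis and trivially on it, so $f\in\mathcal{GM}(U)$. Item 2 then follows by writing $f=\alpha+I\beta$ and computing $(\partial_u+I\partial_v)(\alpha+I\beta)=(\alpha_u-\beta_v)+I(\alpha_v+\beta_u)$, which vanishes by the Cauchy--Riemann system. I take $\alpha,\beta$ to be independent of $I$ and real-valued, or at least commuting with $I$, as in the definition of $\mathcal{HM}$. Then $G[f]=0$ off $\mathbb R$ and trivially on $\mathbb R$. Alternatively one can invoke the cited equivalence $\mathcal{HM}=\mathcal{SM}$, but that needs an s-domain, so the direct computation is preferable here.

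For item 3, $U\cap\mathbb R=\emptyset$, so $G[f]=0$ is equivalent to the slice Cauchy--Riemann equation on every $U_I$. Given $f\in\mathcal{GM}_p(U)$, define $\alpha,\beta$ as in the definition of $\mathcal{GM}_p(U)$. The even/odd symmetry in $v$ is built in. One checks $f(u+Iv)=\alpha+I\beta$ using $I^2=-1$. Independence of $I$ lets us apply the slice equation at both $u+Iv$ and $u-Iv$; the latter lies in the slice of $-I$, where the equation reads $(\partial_u-I\partial_{v'})$ with $v'=-v$, and this is the same equation. Adding and subtracting the two then yields the Cauchy--Riemann system for $\alpha,\beta$, so $f\in\mathcal{HM}(U)$. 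Conversely, item 2 gives $\mathcal{HM}(U)\subset\mathcal{GM}(U)$, and for $f=\alpha+I\beta$ the averages defining the $\mathcal{GM}_p$ data recover exactly $\alpha$ and $\beta$, which do not depend on $I$. Hence $f\in\mathcal{GM}_p(U)$.

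The main obstacle I expect is bookkeeping rather than analysis. The Euler identity must be handled with $I$ held fixed, and one must be careful about noncommutativity when $\alpha,\beta$ are Clifford-valued: $I$ multiplies on the left, and $\alpha,\beta$ must not depend on $I$. One must also check that the parametrizations $u+Iv$ and $u+(-I)(-v)$ are consistent, which is exactly what the parity conditions on $\alpha,\beta$ guarantee.
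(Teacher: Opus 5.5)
The paper gives no proof of this theorem (it is quoted from \cite{GlobalOp}). Your route is the standard one: off the real axis $G=|\underline x|^2(\partial_u+I_x\partial_v)$, because $\sum_j x_j\partial_{x_j}$ equals $v\partial_v$ along $v\mapsto u+I_xv$, while on $\mathbb R$ both coefficients of $G$ vanish. Your computations in all three items are correct.

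\textbf{The missing step is regularity.} $\mathcal{GM}(U)$ is defined as $C^1(U)\cap\textrm{Ker}(G)$, and you only check the kernel condition. Your identity is pointwise and valid on every open set, so it shows $G[f]=0$ for every slice monogenic $f$ on any open set. What the hypothesis ``axially symmetric s-domain'' in item 1 actually supplies is $f\in C^1$, and this can fail without it. For instance, take $n\ge 2$, an axially symmetric domain missing $\mathbb R$, and $f(x)=h(I_x)$ with $h$ differentiable but not $C^1$ on $\mathbb S$. Then $f$ is real differentiable and slice monogenic, being locally constant on each $U_I$, yet it is not $C^1$.

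To close this, use Theorem \ref{Repth}. For a fixed $I$ it gives $f(x)=\alpha(u,v)+I_x\beta(u,v)$ with $\alpha=\frac12\big(f(u+Iv)+f(u-Iv)\big)$ and $\beta=\frac{I}{2}\big(f(u-Iv)-f(u+Iv)\big)$. Since $f|_{U_I}$ is holomorphic, $\alpha$ and $\beta$ are smooth, with $\alpha$ even and $\beta$ odd in $v$. That parity is exactly what makes $\alpha(x_0,|\underline x|)+\frac{\underline x}{|\underline x|}\beta(x_0,|\underline x|)$ of class $C^1$ across the real axis: the $x_j$-derivative of the second term tends to $e_j\,\partial_v\beta(x_0,0)$. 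The same remark gives $C^1$ in items 2 and 3, since the Cauchy--Riemann system makes $\alpha+I\beta$ holomorphic on each slice.

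\textbf{Two smaller points.}
\begin{enumerate}
\item Your hedge that $\alpha,\beta$ be real-valued or commute with $I$ is not part of the definition of $\mathcal{HM}$ (they are Clifford-valued), and it is not needed. $I$ acts from the left both in $\alpha+I\beta$ and in $I\partial_v$, so $I(I\partial_v\beta)=-\partial_v\beta$ without any commutation.
\item In item 3, $I$-independence is not what lets you use the slice equation at $u\pm Iv$. That is just $G[f]=0$ at both points, via the $(I,v)\leftrightarrow(-I,-v)$ consistency you noted. $I$-independence is what makes $\alpha,\beta$ functions of $(u,v)$ alone, as $\mathcal{HM}$ requires.
\end{enumerate}
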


\begin{thm}\cite{CSS9}\label{Repth}
	(Representation formula). Let $U \subset \mathbb R^{n+1}$ be an axially symmetric
	s-domain and $f\in \mathcal {SM}(U)$. For any vector $x = u + I_x v \in  U$ the following
	formula holds:
	$$
	f(x) = \frac{1}{2} (1 - I_x I) f(u + Iv) + \frac{1}{2}  (1 + I_xI) f(u-Iv), \quad I\in \mathbb  S.
	$$
\end{thm}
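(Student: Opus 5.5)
The argument combines the Splitting/identity principle for slice monogenic functions with uniqueness on each slice. Fix $I\in\mathbb S$ and define on $U$ the function
$$g(x)=\tfrac12(1-I_xI)f(u+Iv)+\tfrac12(1+I_xI)f(u-Iv),\qquad x=u+I_xv .$$
It is well defined because $U$ is axially symmetric, so $u\pm Iv\in U$ whenever $u+I_xv\in U$. It is also independent of the choice of $I_x$ when $v=0$, since then both terms reduce to $f(u)$. The goal is to show $g=f$. The strategy is to prove that $g$ is slice monogenic and agrees with $f$ on $U_I$, and then invoke the identity principle on the s-domain $U$.

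\textbf{Step 1: agreement on $\mathbb C_I$.} On $U_I$ take $I_x=I$. Then $1-I_xI=1-I^2=2$ and $1+I_xI=0$, so $g(u+Iv)=f(u+Iv)$. If instead $x=u-Iv$ is written with $I_x=-I$, one gets $1-I_xI=0$ and $1+I_xI=2$, so $g(u-Iv)=f(u-Iv)$. Hence $g|_{U_I}=f|_{U_I}$.

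\textbf{Step 2: $g$ is slice monogenic.} Fix $J\in\mathbb S$ and restrict $g$ to $U_J$, where $g(u+Jv)=\tfrac12 f(u+Iv)+\tfrac12 f(u-Iv)-\tfrac12 JI\bigl(f(u+Iv)-f(u-Iv)\bigr)$. Set $\alpha(u,v)=\tfrac12\bigl(f(u+Iv)+f(u-Iv)\bigr)$ and $\beta(u,v)=\tfrac I2\bigl(f(u-Iv)-f(u+Iv)\bigr)$, so that $g(u+Jv)=\alpha+J\beta$. Then $\alpha$ is even and $\beta$ is odd in $v$. Writing $\partial_I=\partial_u+I\partial_v$ and using $\partial_I f(u+Iv)=0$ together with $(\partial_u-I\partial_v)f(u-Iv)=0$, a short computation gives the Cauchy--Riemann system $\partial_u\alpha=\partial_v\beta$ and $\partial_v\alpha=-\partial_u\beta$. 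From this, $(\partial_u+J\partial_v)(\alpha+J\beta)=\partial_u\alpha-\partial_v\beta+J(\partial_v\alpha+\partial_u\beta)=0$, since $J$ is a constant left factor. So $g\in\mathcal{SM}(U)$; equivalently, $g\in\mathcal{HM}(U)$, and by the theorem of \cite{CSS9} quoted above it is slice monogenic on the axially symmetric s-domain $U$.

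\textbf{Step 3: identity principle.} $f-g$ is slice monogenic and vanishes on $U_I$, which is an open subset of $\mathbb C_I$ meeting $\mathbb R$. Its restriction to any $U_J$ is a $\mathbb C_J$-holomorphic function, after splitting into $\mathbb C_J$-components via the standard Splitting Lemma. This restriction vanishes on $U\cap\mathbb R$, which contains an open real interval, and $U_J$ is connected because $U$ is an s-domain. Hence $f-g\equiv0$ on every $U_J$, and therefore on $U$.

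The main obstacle is Step 3. It needs the Splitting Lemma to convert slice monogenicity into genuine holomorphy of components on each $U_J$, and it needs $U\cap\mathbb R$ to contain a set with an accumulation point. The latter holds because $U$ is open and meets $\mathbb R$. The sign bookkeeping in Step 2, where $I$ and $J$ must be kept on the left of the values of $f$, is routine but must respect noncommutativity.
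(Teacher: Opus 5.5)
The paper does not prove this result; it quotes it from \cite{CSS9}. Your argument is the standard proof of the representation formula in the slice monogenic literature, and it is correct: you define the right-hand side $g$, verify the Cauchy--Riemann system for $\alpha,\beta$ on each slice $U_J$, observe $g=f$ on $U_I$, and conclude by the identity principle on the connected slices. One cosmetic point: calling $g\in\mathcal{SM}(U)$ formally needs real differentiability of $g$ at real points, which follows from $\alpha$ being even and $\beta$ odd in $v$; but Step 3 only uses slice-wise holomorphy of $(f-g)|_{U_J}$ and agreement on $U\cap\mathbb R$, so nothing is lost.
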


\begin{lem}\label{Split} \cite{CSS9} (Splitting Lemma). Given $f\in \mathcal {SM}(U)$. For every $I = I_1 \in \mathbb S$, let $ I_2, 
	\dots , I_n$ be
	a completion to a basis of $\mathbb R_{0,n}$ satisfying the defining relations $I_rI_s + I_sI_r 
	= -2\delta_{r,s}$.
	Then there exist $2^{n-1}$ holomorphic functions $F_A : U_{I}= U \cap \mathbb C_I\to \mathbb C_I$ such that
	for every $z = u + Iv$,
$$f \mid_{U_I}(z) = \sum^{n-1}_{|A|=0}	F_A(z)I_A, \quad  I_A = I_{i_1}\cdots I_{i_s} ,$$
	where $A = i_1\dots i_s$ is a subset of $\{2, . . . , n\}$, with $i_1 < \dots < i_s$, or, when $|A| = 0$,  $I_{\emptyset} = 1$.
\end{lem}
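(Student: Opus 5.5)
The natural route is to restrict $f$ to the slice $U_I$ and decompose its values along a Clifford basis adapted to $I$. Fix $I=I_1\in\mathbb S$ and complete it to $I_1,\dots,I_n$ with $I_rI_s+I_sI_r=-2\delta_{r,s}$. The elements $I_A$, with $A$ running over the ordered subsets of $\{1,\dots,n\}$, form a real basis of $\mathbb R_{0,n}$; this follows because the $I_r$ are images of $e_1,\dots,e_n$ under an orthogonal change of basis of $\mathbb R^n$, which induces an algebra automorphism of $\mathbb R_{0,n}$. Grouping each basis element with its left product by $I_1$, every element of $\mathbb R_{0,n}$ can be written uniquely as $\sum_{A\subset\{2,\dots,n\}}(a_A+I b_A)I_A$ with $a_A,b_A\in\mathbb R$. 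Hence $\mathbb R_{0,n}=\bigoplus_{A}\mathbb C_I I_A$, a direct sum of $2^{n-1}$ copies of $\mathbb C_I$, where $\mathbb C_I$ acts on the left.

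Applying this decomposition pointwise to $f|_{U_I}$ gives unique functions $F_A:U_I\to\mathbb C_I$ with $f|_{U_I}(z)=\sum_A F_A(z)I_A$. Each $F_A$ has real differentiability class equal to that of $f$, since the $F_A$ are obtained from $f$ by fixed real-linear coordinate projections.

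It then remains to show that each $F_A$ is holomorphic. Apply $\frac{\partial}{\partial u}+I\frac{\partial}{\partial v}$ to the decomposition. Because the $I_A$ are constants multiplied on the right and $I$ acts on the left, this gives
\[
0=\Big(\frac{\partial}{\partial u}+I\frac{\partial}{\partial v}\Big)f|_{U_I}
=\sum_A\Big(\frac{\partial F_A}{\partial u}+I\frac{\partial F_A}{\partial v}\Big)I_A .
\]
Each coefficient $\frac{\partial F_A}{\partial u}+I\frac{\partial F_A}{\partial v}$ lies in $\mathbb C_I$, since $\mathbb C_I$ is closed under multiplication by $I$. Uniqueness in the direct sum therefore forces every coefficient to vanish. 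Thus each $F_A$ satisfies the Cauchy–Riemann equation in $\mathbb C_I\cong\mathbb C$ and is holomorphic.

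The main point needing care is the direct-sum claim, namely that the $2^n$ products $I^{\epsilon}I_A$, with $\epsilon\in\{0,1\}$ and $A\subset\{2,\dots,n\}$, are linearly independent. I would obtain this from the universal property of $\mathbb R_{0,n}$: the map $e_r\mapsto I_r$ defines an automorphism, which carries the standard basis $e_B$ onto these products up to sign.
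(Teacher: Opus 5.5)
Your argument is correct and is essentially the standard proof from \cite{CSS9}, which the paper only cites without proof. That proof decomposes $f|_{U_I}$ in the basis $\{I_B\}_{B\subset\{1,\dots,n\}}$, groups components as $F_A = f_A + f_{\{1\}\cup A}\, I$ for $A\subset\{2,\dots,n\}$, and reads off $(\partial_u + I\partial_v)F_A = 0$ from linear independence after applying the slice Cauchy--Riemann operator. One small point: the universal property alone only gives a homomorphism $e_r\mapsto I_r$; bijectivity comes, as your first paragraph says, from the $I_r$ being orthonormal $1$-vectors, so that the inverse is induced by the transpose orthogonal map.
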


\begin{prop}\cite{CSS9} \label{Power}If  $f\in \mathcal {SM}(\mathbb B^{n+1}(0,1))$  there exists a sequence $(a_n)_{n\geq 0}$ of elements of $\mathbb R_{0,n}$  such that 
	\begin{align*}  f  (x) = \sum_{n=0}^\infty x^n a_n,\quad \forall x\in\mathbb B^{n+1}(0,1).  
	\end{align*}
\end{prop}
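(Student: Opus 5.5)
Since $\mathbb B^{n+1}(0,1)$ is an axially symmetric s-domain, I will reduce the statement to the classical Taylor expansion on a single slice and then return to the whole ball through the Representation formula (Theorem \ref{Repth}).

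\textbf{Step 1: restrict to one slice.} Fix $I\in\mathbb S$ and write $f_I=f\mid_{U_I}$ with $U_I=\mathbb B^{n+1}(0,1)\cap\mathbb C_I$, which is the unit disc of $\mathbb C_I$. By Lemma \ref{Split}, after completing $I=I_1$ to $I_1,\dots,I_n$, we have
$$f_I(z)=\sum_A F_A(z)I_A$$
with each $F_A$ holomorphic on the disc with values in $\mathbb C_I\cong\mathbb C$.

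\textbf{Step 2: expand each component.} Each $F_A$ has a Taylor series $F_A(z)=\sum_m z^m c_{A,m}$, where $c_{A,m}\in\mathbb C_I$, converging on the whole disc. Setting
$$a_m=\sum_A c_{A,m}I_A=\frac{1}{m!}\frac{\partial^m f}{\partial u^m}(0),$$
and using that $z^m$ and $c_{A,m}$ both lie in the commutative field $\mathbb C_I$, we get
$$f_I(z)=\sum_m z^m a_m \quad\text{on } U_I.$$
The key observation is that $a_m$ is obtained from real derivatives along the real axis, since the slice Cauchy--Riemann equation gives $\partial_u f_I=f_I'$. Hence $a_m$ is independent of $I$, and the same sequence serves every slice.

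\textbf{Step 3: pass to the whole ball.} Any $x\in\mathbb B^{n+1}(0,1)$ lies in $\mathbb C_{I_x}$ with $|x|<1$. Applying Step 2 with $I=I_x$ gives $f(x)=\sum_m x^m a_m$ directly. Alternatively, one can use Theorem \ref{Repth} with a fixed $I$; there one must note that $\frac12(1-I_xI)(u+Iv)^m+\frac12(1+I_xI)(u-Iv)^m=x^m$, which is the Representation formula applied to the slice monogenic function $x^m$.

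\textbf{Main obstacle.} The delicate point is the independence of the coefficients from $I$. I would settle it by the derivative formula for $a_m$ in Step 2. The hypothesis that the ball meets the real axis is essential here, since it is what makes the real-axis derivatives at $0$ available.
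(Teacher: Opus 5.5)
Your proof is correct and is essentially the standard argument of \cite{CSS9}; the paper cites that argument without reproducing it. The argument restricts $f$ to a slice, applies Lemma \ref{Split}, Taylor-expands each holomorphic $F_A$ on the unit disc of $\mathbb C_I$, and notes that the coefficients $a_m=\frac{1}{m!}\frac{\partial^m f}{\partial x_0^m}(0)$ involve only the values of $f$ on the real axis, so they are independent of $I$ and taking $I=I_x$ gives the expansion at every point of the ball. Your alternative through Theorem \ref{Repth} is also valid and does not even need the $I$-independence, since it uses the coefficients from a single fixed slice together with the identity $\frac12(1-I_xI)(u+Iv)^m+\frac12(1+I_xI)(u-Iv)^m=x^m$.
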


\begin{thm}\label{CauchyFormSM} \cite{CSS9} (Cauchy formula). Let $ W \subset \mathbb R^{n+1}$ be an open set and let 
	$f \in \mathcal {HM}(W)$. Let $U$ be a bounded axially symmetric open set such that 
	$U \subset W$. Suppose that the boundary of $U\cap \mathbb  C_I$ consists of a finite number of rectifiable Jordan curves
	for any $I\in \mathbb  S$. If $x \in U$ then 
	$$
	f(x) = \frac{1}{2\pi} \int_{\partial (U\cap \mathbb C_I )}
	S^{-1}(s, x)ds_I f(s), $$
	where $ds_I = \frac{ds}{I}$ and
	$$
	S^{-1}(s, x) = -(x^2-  2x \textrm{Re}[s] +
	 |s|^2)^{-1}(x- \overline{s}). $$
	The previous integral does not depend on $U$ nor on the imaginary unit $I \in \mathbb S$.
\end{thm}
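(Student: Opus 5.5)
The statement is Theorem~\ref{CauchyFormSM}, the Cauchy formula. I would reduce it to the classical one-variable Cauchy formula on the slice $\mathbb C_I$, and then pass from $\mathbb C_I$ to an arbitrary $x\in U$ using the Representation formula (Theorem~\ref{Repth}).

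\textbf{Step 1: the case $x\in U\cap\mathbb C_I$.} Since $f\in\mathcal{HM}(W)$, its restriction to $W\cap\mathbb C_I$ is $f(u+Iv)=\alpha(u,v)+I\beta(u,v)$, with $\alpha,\beta$ satisfying the Cauchy--Riemann system. Choose $I_2,\dots,I_n$ completing $I$ as in Lemma~\ref{Split}. Expanding the Clifford-valued $\alpha,\beta$ in the basis $I_A$ writes $f|_{U_I}=\sum_A F_A I_A$, where each $F_A$ is a $\mathbb C_I$-valued holomorphic function. Here the $I_A$ stand on the right, so they commute past the integral.

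Next, for $x,s\in\mathbb C_I$ everything commutes, and $x^2-2x\textrm{Re}[s]+|s|^2=(x-s)(x-\overline s)$. Hence
$$S^{-1}(s,x)=-(x-s)^{-1}=(s-x)^{-1}.$$
With $ds_I=ds/I=-I\,ds$, the kernel $\frac{1}{2\pi}S^{-1}(s,x)\,ds_I$ equals the classical Cauchy kernel $\frac{1}{2\pi I}(s-x)^{-1}ds$. The boundary of $U\cap\mathbb C_I$ is a finite union of rectifiable Jordan curves, so the classical Cauchy theorem applies to each $F_A$. It gives $f(x)=\frac1{2\pi}\int_{\partial(U\cap\mathbb C_I)}S^{-1}(s,x)\,ds_I\,f(s)$ for $x\in U\cap\mathbb C_I$.

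\textbf{Step 2: a general $x=u+I_xv\in U$.} Because $U$ is axially symmetric, both $u+Iv$ and $u-Iv$ lie in $U\cap\mathbb C_I$. For $f\in\mathcal{HM}$ the representation formula $f(x)=\frac12(1-I_xI)f(u+Iv)+\frac12(1+I_xI)f(u-Iv)$ follows directly from $f=\alpha+I\beta$ and the parity of $\alpha,\beta$, so it also holds on open sets missing the real axis. Substituting Step 1 for $f(u\pm Iv)$ reduces the claim to the kernel identity
$$S^{-1}(s,x)=\tfrac12(1-I_xI)S^{-1}(s,u+Iv)+\tfrac12(1+I_xI)S^{-1}(s,u-Iv).$$
To prove it, I would observe that $x\mapsto S^{-1}(s,x)$ is of the form $\alpha'+I_x\beta'$ with $\alpha'$ even and $\beta'$ odd in $v$. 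Indeed, $Q(x)=x^2-2x\textrm{Re}[s]+|s|^2$ is a polynomial in $x$ with real coefficients, and $(x-\overline s)$ carries the Clifford constant $\overline s$ on the right. Since $x$ and $Q(x)^{-1}$ both lie in $\mathbb C_{I_x}$, the expression $-Q(x)^{-1}(x-\overline s)$ is a left slice function in $x$ with right coefficients, so the same representation formula holds for it. This kernel identity is the step I expect to be the main obstacle: one must track noncommutativity carefully, since $s$ need not lie in $\mathbb C_{I_x}$, and handle invertibility of $Q(x)$. Invertibility holds because $Q(x)=0$ only on the sphere $[s]$, and $x\notin\partial U$.

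\textbf{Independence.} Independence of $U$ follows from the classical Cauchy theorem applied on each slice. Independence of $I$ follows because the final expression equals $f(x)$ for every choice of $I$.
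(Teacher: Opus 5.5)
The paper does not prove this statement; it quotes it from \cite{CSS9}. Your argument is correct and is the standard one: the classical Cauchy formula on $U\cap\mathbb C_I$ via the splitting, then extension to every $x\in U$ by applying the representation formula to both $f$ and the left slice kernel $x\mapsto S^{-1}(s,x)$, which, as you note, requires no s-domain hypothesis.

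The only point to make explicit is that the classical formula needs $f$ holomorphic up to $\partial(U\cap\mathbb C_I)$. So the hypothesis must be read as $\overline U\subset W$ (as in \cite{GlobalOp}); with merely $U\subset W$ the boundary integral need not even be defined.
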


\begin{thm} \cite{GlobalOp} For	$x\in  [s]$, $S^1(\cdot, x)$ belongs to $\textrm{Ker}(G)$ and $S^{-1}(s,\cdot)$ is in the kernel of the right operator $G_R$.
\end{thm}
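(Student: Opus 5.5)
The idea is to compute directly. We verify that $G$ annihilates the Cauchy kernel $S^{-1}(s,x)=-(x^2-2x\,\textrm{Re}[s]+|s|^2)^{-1}(x-\overline{s})$, viewed as a function of $s$, off the sphere $[x]$ (the statement's ``$x\in[s]$'' is read as ``$x\notin[s]$'', so that the kernel is defined). We do not grind through coordinates. Instead we reduce to the structural facts already recorded: slice monogenic functions lie in $\textrm{Ker}(G)$ (Theorem \ref{teoHGM}), and the right-handed analogue gives $\textrm{Ker}(G_r)$.

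The kernel has the standard equivalent form
$$S^{-1}(s,x)=(s-\overline{x})(s^2-2\textrm{Re}[x]s+|x|^2)^{-1}.$$
Here $(s^2-2\textrm{Re}[x]s+|x|^2)^{-1}$ and $s-\overline{x}$ are polynomials/rational functions in $s$ whose coefficients are paravectors placed on the left. I would prove this equality first, using the identity in $\mathbb C_I$ for each fixed slice.

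\textbf{Step 1 (the kernel in $s$ is slice monogenic).} Restrict to $s=u+Iv\in\mathbb C_I$. Since $s^2-2\textrm{Re}[x]s+|x|^2$ has real coefficients, it lies in $\mathbb C_I$. Hence $(s^2-2\textrm{Re}[x]s+|x|^2)^{-1}$ is holomorphic in $\mathbb C_I$ off the two points of $[x]\cap\mathbb C_I$. The factor $s-\overline{x}$, with the constant $\overline{x}$ on the left, is annihilated by $\partial_u+I\partial_v$ acting from the right. So $s\mapsto S^{-1}(s,x)$ is right slice monogenic on the axially symmetric set $\mathbb R^{n+1}\setminus[x]$. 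When $x\notin\mathbb R$ this set does not meet $\mathbb R$ in a problematic way; in any case it is axially symmetric. The function is moreover of the form $\alpha+\beta I$ with the required even/odd dependence on $v$, so it is a right holomorphic map of a paravector variable.

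\textbf{Step 2 (pass to $G$).} I then invoke the right-handed version of Theorem \ref{teoHGM}(2), valid by the Remark about $G_r$, to conclude $G_r[S^{-1}(\cdot,x)]=0$. This matches the statement's first claim, up to left/right conventions.

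\textbf{Step 3 (the other variable).} For $x\mapsto S^{-1}(s,x)$ with the original left form, fix $s$. On $x\in\mathbb C_J$ the factor $(x^2-2x\textrm{Re}[s]+|s|^2)^{-1}\in\mathbb C_J$ is holomorphic, and it multiplies $(x-\overline s)$ on the left. So the function is left slice monogenic in $x$, and Theorem \ref{teoHGM}(1)/(2) gives membership in $\textrm{Ker}(G)$.

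The obstacle is bookkeeping of sides: which of $G$ and $G_r$ goes with which variable. As a sanity check, I would apply $G$ directly to the scalar function $Q(x)=x^2-2x\textrm{Re}[s]+|s|^2=(x_0^2-|\underline x|^2-2x_0\textrm{Re}[s]+|s|^2)+2\underline x(x_0-\textrm{Re}[s])$. Using $\sum_j x_j\partial_j\underline x=\underline x$ and $\sum_j x_j\partial_j|\underline x|^2=2|\underline x|^2$, this gives $G[Q]=0$. Combined with the Leibniz-type rule $G[fg]=G[f]g+\dots$ for slice-preserving $f$, this confirms the result independently of the domain hypotheses in Theorem \ref{teoHGM}.
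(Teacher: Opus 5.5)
The paper gives no proof of this statement; it only cites \cite{GlobalOp}. Your route is the natural one: the Cauchy kernel is left slice monogenic in $x$ and right slice monogenic in $s$, and slice monogenic functions are annihilated by $G$ (resp.\ $G_R$). Steps 1--3 are correct. However, they prove $S^{-1}(s,\cdot)\in\textrm{Ker}(G)$ and $S^{-1}(\cdot,x)\in\textrm{Ker}(G_R)$ for $x\notin[s]$. That is not the printed statement ``up to left/right conventions''. With $G$ carrying $\underline{x}$ on the left and $S^{-1}$ fixed as in Theorem \ref{CauchyFormSM}, the printed assignment is false, and so is your opening sentence. Write $S^{-1}(s,x)=(s-\overline{x})h(s)$ with $h(s)=(s^2-2\textrm{Re}[x]s+|x|^2)^{-1}$. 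On $s=u+Iv$ you get
\[
(\partial_u+I\partial_v)S^{-1}(s,x)=-\big(\overline{x}+I\,\overline{x}\,I\big)h'(s)=2\big(\underline{x}-\langle\underline{x},I\rangle I\big)h'(s).
\]
For $n\geq 2$ this is nonzero in general (e.g.\ $x=e_2$, $s=1+e_1$), so $G[S^{-1}(\cdot,x)]\neq 0$; similarly $G_R[S^{-1}(s,\cdot)]\neq 0$. Besides $S^1\to S^{-1}$ and $x\in[s]\to x\notin[s]$, the statement attaches $G$ and $G_R$ to the wrong variables. Say this outright and prove the corrected version, which is what your Steps 1--3 actually establish.

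There are also two smaller points. First, justifying
\[
-(x^2-2x\textrm{Re}[s]+|s|^2)^{-1}(x-\overline{s})=(s-\overline{x})(s^2-2\textrm{Re}[x]s+|x|^2)^{-1}
\]
``in $\mathbb C_I$ for each fixed slice'' does not work as stated, because $x$ and $s$ generally lie in different slices and do not commute. Instead, expand $(x^2-2s_0x+|s|^2)(s-\overline{x})+(x-\overline{s})(s^2-2x_0s+|x|^2)$, with $x_0=\textrm{Re}[x]$ and $s_0=\textrm{Re}[s]$. Using $x^2=2x_0x-|x|^2$ and $s^2=2s_0s-|s|^2$, it reduces to $-|x|^2(s+\overline{s}-2s_0)-|s|^2(x+\overline{x}-2x_0)=0$.

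Second, your worry about domain hypotheses, and the appeal to a ``right version'' of Theorem \ref{teoHGM}(2) through an informal remark, are unnecessary. For $x=u+Iv$ with $v=|\underline{x}|>0$ one has $\sum_j x_j\partial_{x_j}=v\,\partial_v$. Hence $G[f](x)=v^2(\partial_u+I\partial_v)f(u+Iv)$ and $G_R[f](x)=v^2(\partial_u f+\partial_v f\,I)(u+Iv)$, and both operators vanish trivially on $\mathbb R$. So left (resp.\ right) slice monogenicity gives the kernel property pointwise on any open set.

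Finally, your closing ``sanity check'' is in fact a complete proof of the $x$-half. You have $G[Q]=0$. The rule $G[\varphi g]=G[\varphi]g+\varphi G[g]$ for slice-preserving $\varphi$ then gives $G[Q^{-1}]=0$. Since $G[x-\overline{s}]=|\underline{x}|^2+\underline{x}^2=0$, it follows that $G[S^{-1}(s,\cdot)]=0$.
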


\begin{thm} \label{CauchyFormSM2} \cite{GlobalOp} Let $W \subset \mathbb R^{n+1}$ be an open set and $f \in \mathcal{GM}_p(U)$, where $U$ is a
	bounded axially symmetric open set such that $U\subset  W$ that does not intersect the real
	line. Suppose that the boundary of $U \cap  \mathbb C_I$ consists of a finite number of rectifiable
	Jordan curves for any $I \in\mathbb S$. Then, if $x\in U$, we have
	$$ f(x) = \frac{1}{2\pi}\int_{\partial(U\cap \mathbb C_I )}
	S^{-1}(s, x)ds_I f(s),$$
	where $ds_I = \frac{ds}{I}$  and the integral does not depend
	on $U$ nor on the imaginary unit $I\in\mathbb  S$.
\end{thm}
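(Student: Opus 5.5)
The statement is Theorem \ref{CauchyFormSM2}, the Cauchy formula for $\mathcal{GM}_p(U)$ on axially symmetric sets avoiding the real axis. I would not prove a new Cauchy formula from scratch. Instead I would reduce to the Cauchy formula for holomorphic maps of a paravector variable, Theorem \ref{CauchyFormSM}, using the identification $\mathcal{GM}_p(U)=\mathcal{HM}(U)$ given by item 3 of Theorem \ref{teoHGM}.

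\textbf{Step 1.} Since $U$ is axially symmetric, open, and does not meet the real line, and $f\in\mathcal{GM}_p(U)$ is real differentiable, item 3 of Theorem \ref{teoHGM} gives $f\in\mathcal{HM}(U)$. To do this I would check that $f$ is defined and $C^1$ on all of $U$, so that it is real differentiable there. Concretely, on each slice $U\cap\mathbb C_I$ the function $f$ then has the form $\alpha(u,v)+I\beta(u,v)$. Here $\alpha,\beta$ are independent of $I$, have the required even/odd symmetry in $v$, and satisfy the Cauchy--Riemann system.

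\textbf{Step 2.} I would apply Theorem \ref{CauchyFormSM} with $W$ replaced by $U$ itself, or by a slightly larger axially symmetric open set on which $f$ is holomorphic. This is the main subtlety. Theorem \ref{CauchyFormSM} needs $f$ holomorphic on an open $W$ containing the bounded set $U$ together with its boundary curves $\partial(U\cap\mathbb C_I)$. The hypothesis as written, $f\in\mathcal{GM}_p(U)$ with $U\subset W$, should be read with $f\in\mathcal{GM}_p(W)$ and $\overline U\subset W$, where $W$ is axially symmetric and also avoids the real axis.

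\textbf{Step 3 (fallback).} If one wants to avoid relying on the precise domain conventions of Theorem \ref{CauchyFormSM}, I would argue directly on a slice. Fix $I$ and $x\in U$. By axial symmetry, the points $x_I=u+Iv$ and $\overline{x_I}=u-Iv$ lie in $U\cap\mathbb C_I$, where $x=u+I_xv$. The restriction $f|_{U\cap\mathbb C_I}$ is a $\mathbb C_I$-holomorphic function with values in $\mathbb R_{0,n}$, acting by left multiplication of $\mathbb C_I$. The classical one-variable Cauchy formula on the finitely many rectifiable Jordan curves then gives $f(x_I)$ and $f(\overline{x_I})$ as integrals of $(s-z)^{-1}\,ds_I f(s)$ over the boundary. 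Combining these through the representation formula (Theorem \ref{Repth}, valid for $\mathcal{HM}$ functions by the same slice computation), namely
\[
f(x)=\tfrac12(1-I_xI)f(x_I)+\tfrac12(1+I_xI)f(\overline{x_I}),
\]
one obtains the kernel
\[
\tfrac12(1-I_xI)(s-x_I)^{-1}+\tfrac12(1+I_xI)(s-\overline{x_I})^{-1}.
\]
The algebraic identity showing this equals $S^{-1}(s,x)$ is the computational core. I would verify it by clearing denominators, using $(s-x_I)(s-\overline{x_I})=s^2-2us+u^2+v^2$ together with the fact that $x$ commutes with $x^2-2x\textrm{Re}[s]+|s|^2$.

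\textbf{Step 4.} Independence of $I$ follows because the left-hand side $f(x)$ does not depend on $I$. Independence of $U$ follows from the classical Cauchy theorem on each slice.

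The expected obstacle is Step 3's kernel identity. In particular, $s\in\mathbb C_I$ does not commute with $I_x$, so the noncommutative ordering must be handled carefully.
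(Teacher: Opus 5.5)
Your main route (item 3 of Theorem \ref{teoHGM} to pass from $\mathcal{GM}_p$ to $\mathcal{HM}$, then Theorem \ref{CauchyFormSM}) is the natural reduction for this result, which the paper does not prove but only quotes from \cite{GlobalOp}, and your repair of the hypotheses to $f\in\mathcal{GM}_p(W)$ with $\overline{U}\subset W$, $W$ axially symmetric and disjoint from the real axis, is what makes the boundary integral meaningful. Your fallback slice argument is also sound: the combined kernel collapses to $(s-\overline{x})(s^2-2\textrm{Re}[x]s+|x|^2)^{-1}$, and its equality with $S^{-1}(s,x)$ follows on clearing denominators from the paravector relations $x^2=2\textrm{Re}[x]x-|x|^2$ and $s^2=2\textrm{Re}[s]s-|s|^2$, not from the commutation fact you mention.
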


\subsection{Quaternionic slice regular function theory}\label{QSRFT}
This subsection shows a brief summary of the  slice regular function theory obtained from    \cite{cgs, CSS, GS, GlobalOp, G, GG1, GG2, GPG, gpr, gssbook}. 

A quaternion is given by $\displaystyle q=\sum_{k=0}^3x_ke_k$, where $x_k\in\mathbb R$, for  all $k$. The skew-field of quaternions is denoted by $\mathbb H$. The real part of $q\in \mathbb H$ is Re$(q)=x_0$  and its vector part is   $\underline{q} = x_1e_1 +  x_2e_2  + x_3e_3$  which is   usually identified  with    $ ( x_1, x_2, x_3)\in\mathbb R^3=:Vec(\mathbb H)$.  The quaternionic conjugation and the quaternionic modulus of  $q$ are $ \bar q :=x_0 - \underline{q} $ and  $\|q\|=\sqrt{ q\bar q }= \sqrt{x_0^2+ x_1^2+  x_2^2 + x_3^2 }$, respectively. 

Denote
\begin{align*}
\mathbb B^{4}(0,1)=& \{q\in \mathbb H \ \mid \ \| q\| <1\}, \\
 \mathbb S^{3} = & \{q\in \mathbb H \ \mid \ \| q\| = 1\}.
 \end{align*}

The unit sphere  in $\mathbb R^3$ is denoted by
$$ \mathbb S^2 = \{\underline{q} \in Vec(\mathbb H) \ \mid \ \|\underline{ q}\|=1 \}.$$

Note that for all  ${ {I}} \in \mathbb S^2$ one has $ {I}^2=-1$ and   $\mathbb C( {I}):=\{x+ {I}y  \mid  x,y\in\mathbb R\} \cong \mathbb C$ as fields.

\begin{rem}
	It is important to see that  the vector part of a quaternion $q$ is denoted as $\vec q$ or ${\bf q}$ in some papers, but in this work we have decided to denote it as $\underline{q}$ (or $I$ and $J$)  in order to use a single notation throughout this paper (to facilitate the notation of a unit vector  and its relationship with a complex plane).
\end{rem}

\begin{defn}  (Slice regular functions). Let $U \subset \mathbb H$ be an open set and	$f : U\to  \mathbb H$ be a real differentiable function. Let $ {I} \in \mathbb  S^2$, denote 
	$ U_{ {I}} = \mathbb C_{ {I} } \cap U$. Then we say that $f$ is a slice regular function,
	or s-regular function, if for every $ {I}\in\mathbb S^2$, we have
	$$
	\left( \frac{\partial} 
	{\partial u}+  {I} \frac{\partial} 
	{\partial v} \right) f \mid_{U_{ {I}}} (u+ {I}v) = 0, \quad  \forall   u+ {I}v \in  U_{ {I}}. $$
Let $\mathcal{SR}(U)$ denotes the set of slice regular functions defined on $U$.
\end{defn}

\begin{defn} (Slice domains). Let $U\subset\mathbb  H$ be a domain. We say that $U$ is a  slice domain (s-domain for short) if $U \cap \mathbb  R$ is nonempty and if $\mathbb C_{ {I}} \cap  U$ is a domain  in $\mathbb C_{ {I}}$ for all ${I} \in \mathbb S^2$.
\end{defn}

\begin{defn} (Axially symmetric domains). Let $U\subset\mathbb  H$. We say that $U$ is
	axially symmetric if, for all $u+ {I}v \in U$, the whole $(2)$-sphere $[u+ {I}v]:=\{u+ Jv \ \mid \  J\in \mathbb S^2  \}$ is contained
	in $U$.
\end{defn}

\begin{defn} (Holomorphic maps of a paravector variable). Let $U\subset\mathbb  H$  
	an axially symmetric open set and $\mathcal U\subset \mathbb R \times \mathbb R$ be such that $q = u +  {I}v \in  U$ for
	all $(u, v) \in \mathcal  U$. The (left) holomorphic maps of a paravector variable defined on
	$U$ are of the form $f(q) = \alpha(u, v) + {I}\beta (u, v)$,  where $\alpha, \beta$ are $\mathbb R^n$-valued differentiable
	functions, satisfying   $\alpha (u, v) = \alpha(u,-v)$,
	$\beta(u, v) = - \beta (u,-v)$,  for all $(u, v) \in \mathcal U$, and   the Cauchy-Riemann system
	\begin{align*}
		\frac{\partial \alpha}{\partial u}- \frac{\partial \beta}{\partial v} = 0, \quad 
		\frac{\partial \alpha}{\partial v}+ \frac{\partial \beta}{\partial u} = 0.
	\end{align*}
	We denote by $\mathcal{HR}(U)$ the set of holomorphic maps of a paravector variable on the
	open set $U$.
\end{defn}

\begin{defn} Let $U \subset \mathbb H$ be an open set and let 
	$f : U \to \mathbb H$ be a real
	differentiable function. Define 
	$$ G [f] (q) = |\underline{x}|^2  \frac{\partial f}{\partial x_0} (q)+  \underline{x}  \sum_{j=1}^3 x_j  \frac{\partial f}{\partial  x_j} (q), \quad \forall q= x_0 +\underline{x}\in U,$$
	for all $f\in C^1(\mathbb H, \mathbb H)$.

Set $\mathcal GR(U) := C^1(U, \mathbb H) \cap \textrm{Ker}(G)$. 
	\end{defn}	

\begin{rem}
According to the previous notations and hypothesis we see that    the right version of $G$ is given by 
	$$ G_r [f] (q) = |\underline{x}|^2  \frac{\partial f}{\partial x_0} (q)+    \sum_{j=1}^3 x_j  \frac{\partial f}{\partial  x_j} (q)    \underline{x} , \quad \forall q\in U.$$
\end{rem}

\begin{thm}\cite{GlobalOp} \label{HGquater}
Let $U \subset \mathbb H$ be an axially symmetric s-domain.
\begin{enumerate}
		\item 	If  $f \in \mathcal {SR}(U)$  then $f \in \mathcal {GR}(U)$.
		\item 	If $f \in \mathcal { HR}(U)$  then $f\in \mathcal{ GR}(U)$.
	\end{enumerate}
\end{thm}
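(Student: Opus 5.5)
The plan is to prove both items by a direct computation. First reduce $G$ on each slice $\mathbb C_I$ to the operator $\frac{\partial}{\partial u}+I\frac{\partial}{\partial v}$, and then invoke the defining equation of $\mathcal{SR}(U)$, resp. the Cauchy--Riemann system of $\mathcal{HR}(U)$.

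\textbf{Slice reduction of $G$.} Fix $q=x_0+\underline{x}\in U$ with $\underline{x}\neq 0$, and write $u=x_0$, $v=|\underline{x}|$, $I=I_q=\underline{x}/v$. The Euler-type term $\sum_{j=1}^3 x_j\frac{\partial}{\partial x_j}$ is $v$ times the radial derivative in the vector variable. Hence, for $f$ differentiable along the ray $t\mapsto u+tI$ (which stays in $\mathbb C_I$),
$$\sum_{j=1}^3 x_j\frac{\partial f}{\partial x_j}(q)=v\,\frac{\partial}{\partial v}\big(f\mid_{U_I}\big)(u+Iv).$$
Therefore
$$G[f](q)=v^2\frac{\partial f}{\partial u}(u+Iv)+vI\,v\frac{\partial f}{\partial v}(u+Iv)=v^2\Big(\frac{\partial}{\partial u}+I\frac{\partial}{\partial v}\Big)f\mid_{U_I}(u+Iv).$$
At points with $\underline{x}=0$ both coefficients of $G$ vanish, so $G[f](q)=0$ automatically.

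\textbf{Item 1.} If $f\in\mathcal{SR}(U)$, the bracket vanishes for every $I\in\mathbb S^2$ by definition. Since every $q$ lies in $\mathbb C_{I_q}$, this gives $G[f]\equiv 0$. The $C^1$ regularity required for $\mathcal{GR}(U)$ is standard for slice regular functions on s-domains; one can take it from the representation formula in the quaternionic setting, the analogue of Theorem \ref{Repth}.

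\textbf{Item 2.} If $f\in\mathcal{HR}(U)$, write $f(u+Iv)=\alpha(u,v)+I\beta(u,v)$ with $\alpha,\beta$ independent of $I$, so that $f$ is $C^1$. Then
$$\Big(\frac{\partial}{\partial u}+I\frac{\partial}{\partial v}\Big)(\alpha+I\beta)=\Big(\frac{\partial\alpha}{\partial u}-\frac{\partial\beta}{\partial v}\Big)+I\Big(\frac{\partial\alpha}{\partial v}+\frac{\partial\beta}{\partial u}\Big)=0$$
by the Cauchy--Riemann system, using $I^2=-1$ and the fact that $I$ is constant on the slice. So $G[f]=0$ by the slice reduction above. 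The parity conditions on $\alpha$ and $\beta$ ensure that $f$ is well defined when $v$ is replaced by $-v$ and $I$ by $-I$, so the computation is consistent on each full plane $\mathbb C_I$.

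\textbf{Main obstacle.} The delicate step is the identity relating the Euler operator to $\partial_v$ on the slice, since it must be justified for real differentiable $f$ restricted to slices. It follows from the chain rule applied to $t\mapsto f(x_0+tI)$ at $t=v$, noting that $\frac{d}{dt}(x_0+tI)=I$ corresponds to the direction $\underline{x}/v$. The behaviour on the real axis is covered by the vanishing of the coefficients there.
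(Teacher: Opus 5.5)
Your proof is correct and essentially matches the source: the paper gives no proof of its own here but quotes the result from \cite{GlobalOp}, and the argument there rests on the same observation. In the coordinates $u=x_0$, $v=|\underline{x}|$, $I=\underline{x}/|\underline{x}|$, the Euler operator $\sum_{j=1}^3 x_j\frac{\partial}{\partial x_j}$ acts as $v\frac{\partial}{\partial v}$, so $G=|\underline{x}|^2\left(\frac{\partial}{\partial u}+I\frac{\partial}{\partial v}\right)$ on each slice, and the (slice) Cauchy--Riemann equations then give $G[f]=0$. The one point you treat lightly is the $C^1$ membership in item 2: it is better justified by noting that the Cauchy--Riemann system makes each $\alpha_k+i\beta_k$ holomorphic (with $\alpha_k,\beta_k$ the real components of $\alpha,\beta$) and that the parity conditions handle the real axis, but this is a routine regularity remark rather than a gap.
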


\begin{defn}\label{def1} Reproducing functions.
\begin{align*}
		A (\tau , x ) = & \frac{    1}{2\pi^2}  \frac{ \underline{x} \overline{(\tau  - x )}  \underline{\tau}   }{\| \underline{x}  \|^2\|\tau  - x \|^4  \|\underline{\tau} \|^2  }  , \\
		B (\tau , x)= &  
		\frac{1}{\pi^2   } \frac{\underline{x} }{   \|\underline{x} \|^2 } \bigg[     \frac{  {\tau}  + 3 \overline{x }   - 4 \overline{{\tau} }      }{   \|\tau -x \|^4  }     + \frac{  4 (\overline{ \tau - x  }) [   (x_0 - \tau  )\underline{\tau}   -  \left\langle \underline{\tau} , \underline{x}  \right\rangle  ] }{   \|\tau  - x \|^6}   \bigg],\\
		C (x,\tau)  = &    \frac{1}{\pi^2   }  \bigg[     \frac{   {\tau}  + 3 \overline{x }   - 4 \overline{{\tau} }      }{   \|x - \tau \|^4  }     + \frac{  4  [   (x_0- \tau  )\underline{\tau}   -  \left\langle \underline{\tau} , \underline{x} \right\rangle  ] (   \overline{     \tau   - x  }) }{   \| \tau - x  \|^6}   \bigg]  \frac{\underline{x} }{   \| \underline{x} \|^2 },\\
	\end{align*}  
where $\tau , x  \in \mathbb H\setminus\mathbb R$ such that  $x = x_0   + \underline{x} $ and   $x \neq \tau $.  In addition,  for any  $x\in\mathbb H\setminus\mathbb R$ denote
	$$\displaystyle  \nu _x :=   2 d\hat{x}_0+ 2  \frac{\underline{x} }{ \|\underline{x}  \|^2 } \sum_{k=1}^3 x_k d \hat{x}_k.$$ 
\end{defn}

\begin{thm}\cite{GG1}\label{quaternionic_Borel-Pompeiu_form} (The global  quaternionic Borel-Pompeiu-type formula).
	Let $\Omega\subset\mathbb H$ be a domain such that  $\partial \Omega$ is a 3-dimensional compact smooth surface and $\overline{\Omega}\subset \mathbb H\setminus \mathbb R$. Then
	\begin{align*}   
		& \displaystyle \int_{\partial \Omega}  \|\underline{\tau} \|^2  \big[  {g(\tau)} \nu_\tau   A( x, \tau)  - A (\tau, x)  \nu_\tau  f(\tau) \big]   +  		\int_{\Omega} \big[ B (y, x)f(y)  -     
		g(y) C (x,y)	\big]
		dy \nonumber   \\ 
		&  +  2	\int_{\Omega}  \big[ 	   A (y, x)   G[f](y)    -    G_r[g](y) A (x,y)  \big]
		dy =       \left\{ \begin{array}{ll} f(x) + g(x), &  x\in \Omega, \\ 0 ,&  x\in \mathbb H\setminus\overline{\Omega},  \end{array}       \right.    \end{align*}
	for all  $f,g \in C^{1}(\overline{\Omega},\mathbb H)$.	
\end{thm}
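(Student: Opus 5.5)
The plan is to derive a Stokes-type formula adapted to the variable-coefficient operator $G$, and then apply it with the reproducing functions of Definition \ref{def1} as kernels, in the usual way a Borel--Pompeiu formula is obtained from Stokes' theorem. The key observation is the factorization
$$G = \|\underline{x}\|^2\Big(\frac{\partial}{\partial x_0} + \frac{\underline{x}}{\|\underline{x}\|^2}\sum_{k=1}^3 x_k\frac{\partial}{\partial x_k}\Big),$$
valid on $\mathbb H\setminus\mathbb R$. The form $\nu_x$ is built precisely from the coefficient vector $\big(1,\tfrac{\underline{x}}{\|\underline{x}\|^2}x_1,\dots\big)$, with the factor $2$ inserted for normalization.

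The first step is to compute, for $f,g\in C^1(\overline\Omega,\mathbb H)$, the exterior derivative $d\big(\|\underline{x}\|^2 g\,\nu_x\, f\big)$ using the Leibniz rule for $\mathbb H$-valued forms. This will produce three kinds of terms:
\begin{itemize}
\item[(i)] $2\big(G_r[g]f+gG[f]\big)\,dx$, up to the ordering of the non-commuting factor $\underline{x}$;
\item[(ii)] terms in which $d$ falls on $\|\underline{x}\|^2$;
\item[(iii)] terms in which $d$ falls on the coefficients $\underline{x}x_k/\|\underline{x}\|^2$ of $\nu_x$.
\end{itemize}
The terms of types (ii) and (iii) are zero-order in $f,g$. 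Integrating over $\Omega$ gives a global Stokes formula in which the boundary integral equals the volume integral of $2(G_r[g]f+gG[f])$ plus zero-order terms.

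Next, fix $x\in\Omega$ and apply this formula twice on $\Omega_\varepsilon=\Omega\setminus \overline{B(x,\varepsilon)}$:
\begin{itemize}
\item once with the pair $(g,f)$ replaced by $(A(\cdot,x),f)$;
\item once with $(g,f)$ replaced by $(g,A(x,\cdot))$.
\end{itemize}
On $\Omega_\varepsilon$ the kernels are smooth because $\overline\Omega$ avoids $\mathbb R$. The volume terms in which $G_r$ (respectively $G$) acts on the kernel, together with the zero-order terms from the first step, are then computed explicitly. I expect them to assemble into exactly $B(y,x)$ and $C(x,y)$. Indeed, $A$ is a Cauchy--Fueter kernel $\overline{(\tau-x)}/\|\tau-x\|^4$ sandwiched between $\underline{x}/\|\underline{x}\|^2$ and $\underline{\tau}/\|\underline{\tau}\|^2$, and it is not annihilated by $G$. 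The expressions $\tau+3\bar x-4\bar\tau$ and $\overline{(\tau-x)}[(x_0-\tau)\underline\tau-\langle\underline\tau,\underline x\rangle]/\|\tau-x\|^6$ in $B$ and $C$ are precisely the kind of terms produced by differentiating $\overline{(\tau-x)}\,\underline\tau/\|\tau-x\|^4$ along the vector field $\sum_k \tau_k\partial_{\tau_k}$. The point $x\notin\overline\Omega$ is simpler: no ball is removed and the same computation yields $0$.

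Then let $\varepsilon\to0$. The volume integrals converge, since $A$ is $O(\|\tau-x\|^{-3})$ and $B,C$ are $O(\|\tau-x\|^{-4})$ at worst but with cancelling angular structure; if needed they are interpreted as limits over $\Omega_\varepsilon$. On the small sphere $\|\tau-x\|=\varepsilon$, write $\tau=x+\varepsilon\omega$. Then $\|\underline{\tau}\|^2 A(\tau,x)\nu_\tau$ equals
$$\frac{1}{2\pi^2}\,\frac{\underline{x}}{\|\underline{x}\|^2}\;\frac{\bar\omega}{\varepsilon^3}\;\underline{x}\;\nu_x \;+\; O(\varepsilon^{-2}).$$
Since $\nu_x$ restricted to the sphere is, up to the factor $2$, the rotated surface element, the leading term integrates against $f(x)$ to give $f(x)$, by the classical Cauchy--Fueter normalization $\int_{\mathbb S^3}\bar\omega\,d\sigma\,\omega = 2\pi^2$. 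The conjugation by $\underline{x}/\|\underline{x}\|$ cancels, and the analogous computation for $g$ gives $g(x)$.

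The main obstacle is the algebra of the second step: verifying that the derivatives of the kernel together with the $d\nu$ terms combine exactly into the stated $B$ and $C$, with all non-commutative orderings correct. This must be done carefully using $\partial_{\tau_k}\|\tau-x\|^{-4}$ and $\sum_k\tau_k\partial_{\tau_k}\underline\tau=\underline\tau$. A secondary delicate point is justifying the singular limit of the volume terms.
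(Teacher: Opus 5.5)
The paper gives no proof of this statement; it is quoted from \cite{GG1}, so your plan has to stand on its own. Its architecture is the natural one, and your first step is correct. With the orientation for which Proposition \ref{StokesG} holds,
\[
d\big(\|\underline{y}\|^2\, g\,\nu_y\, h\big)=\big(2G_r[g]\,h+2g\,G[h]+8g\,\underline{y}\,h\big)\,dy .
\]
The gap is the step you postpone: that the zero-order terms ``assemble into exactly $B$ and $C$''.

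\begin{itemize}
\item For the pair $(A(\cdot,x),f)$ the resulting kernel is $2G_r[A(\cdot,x)](y)+8A(y,x)\underline{y}$. This does equal $B(y,x)$.
\item For the pair $(g,A(x,\cdot))$ it is $2G[A(x,\cdot)](y)+8\underline{y}A(x,y)$, and this equals $-C(x,y)$, not $C(x,y)$. At $x=e_1$, $y=t+e_1$ ($t$ real, $t\neq 0$) it is $\frac{1}{\pi^2}(3t^{-3}e_1-2t^{-4})$, while Definition \ref{def1} gives $C(x,y)=\frac{1}{\pi^2}(2t^{-4}-3t^{-3}e_1)$.
\end{itemize}

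Consequently, take $f\equiv 0$, $g\equiv 1$, $x=e_1$, and $\Omega$ a small ball around $1+e_1$. Stokes' theorem turns the left-hand side into $-2\int_\Omega C(x,y)\,dy\neq 0$, even though $x\notin\overline{\Omega}$. So the statement as printed has a sign error in the $g$-volume term. Carried out, your argument proves the formula with $+\int_\Omega g(y)C(x,y)\,dy$; it cannot prove it as written. Deferring this check with ``I expect'' is exactly where the problem slips through.

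The small-sphere step is also justified incorrectly. For $\tau=x+\varepsilon\omega$ with $\omega\in\mathbb S^3$, one has $\nu_\tau=2\,\omega_{I_x}\,\varepsilon^3\,d\sigma(\omega)+O(\varepsilon^4)$, where $\omega_{I_x}:=\omega_0+I_x\langle I_x,\underline{\omega}\rangle$. This is the projection of the normal onto $\mathbb C_{I_x}$, not a rotated copy of $\omega\,d\sigma$.

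Had it been $2\omega\,d\sigma$, your leading term would integrate to $\frac{1}{\pi^2}I_x\big(\int_{\mathbb S^3}\overline{\omega}\,I_x\,\omega\,d\sigma\big)f(x)=0$. Indeed $\int_{\mathbb S^3}\overline{\omega}\,I\,\omega\,d\sigma=0$ for every unit vector $I$, so the conjugation by $I_x$ does not cancel.

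The actual mechanism has two parts:
\begin{itemize}
\item $\omega_{I_x}$ commutes with $I_x$;
\item $\int_{\mathbb S^3}\overline{\omega}\,\omega_{I_x}\,d\sigma=\int_{\mathbb S^3}\omega_{I_x}\overline{\omega}\,d\sigma=\pi^2$, which is half of $2\pi^2$ and is compensated by the factor $2$ in $\nu$.
\end{itemize}
Hence, with the small sphere oriented outward, $\int_{\|\tau-x\|=\varepsilon}\|\underline{\tau}\|^2A(\tau,x)\nu_\tau f(\tau)\to\frac{1}{\pi^2}I_x\,\pi^2\,I_x\,f(x)=-f(x)$. Similarly $\int_{\|\tau-x\|=\varepsilon}\|\underline{\tau}\|^2g(\tau)\nu_\tau A(x,\tau)\to g(x)$. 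These limits are what produce the opposite signs in front of the two boundary terms.

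Your remark on the $\|y-x\|^{-4}$ singularity of $B$ and $C$ is correct: the spherical mean of the leading part vanishes. Reading the volume integrals as limits over $\Omega_\varepsilon$, which the Stokes argument does automatically, handles it.
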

 
\begin{prop} \cite{GG1}  \label{quaternionic_Cauchy_integral_form} (The  global quaternionic Cauchy-type integral formula).
	Let $\Omega\subset\mathbb H$ be a domain such that  $\partial \Omega$ is a 3-dimensional compact smooth surface and $\overline{\Omega}\subset \mathbb H\setminus \mathbb R$. Then
	\begin{align*} 
		&  \displaystyle \int_{\partial \Omega}   \|\underline{\tau} \|^2\big[  {g(\tau)} \nu_\tau   A ( x, \tau)  - A (\tau, x)   \nu_\tau   f(\tau) \big]    +  		\int_{\Omega} \big[ B (y, x)f(y)  -     
		g(y) C (x,y)	  \big]
		dy \nonumber \\
		&    =  \left\{ \begin{array}{ll} f(x) + g(x), &  x\in \Omega, \\ 0 ,&  x\in \mathbb H\setminus\overline{\Omega},  \end{array}       \right.    \end{align*}
	for all  $f\in \textrm{Ker}( G) \cap  C^{1}(\overline{\Omega},\mathbb H)$ and $g\in \textrm{Ker}(G_r)  \cap  C^{1}(\overline{\Omega},\mathbb H)$.
\end{prop}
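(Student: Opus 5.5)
This follows directly from Theorem \ref{quaternionic_Borel-Pompeiu_form}, the global quaternionic Borel-Pompeiu-type formula. The plan is to specialize that identity to the kernels of $G$ and $G_r$. The hypotheses on $\Omega$ are identical in both statements: $\partial\Omega$ is a compact smooth 3-dimensional surface and $\overline{\Omega}\subset\mathbb H\setminus\mathbb R$. Moreover, $\textrm{Ker}(G)\cap C^1(\overline{\Omega},\mathbb H)$ and $\textrm{Ker}(G_r)\cap C^1(\overline{\Omega},\mathbb H)$ are subsets of $C^1(\overline{\Omega},\mathbb H)$. So for $f\in\textrm{Ker}(G)\cap C^1(\overline{\Omega},\mathbb H)$ and $g\in \textrm{Ker}(G_r)\cap C^1(\overline{\Omega},\mathbb H)$ the Borel-Pompeiu formula holds verbatim for the pair $(f,g)$.

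The key step is to look at the third group of terms,
$$2\int_{\Omega}\big[A(y,x)G[f](y)-G_r[g](y)A(x,y)\big]\,dy .$$
Since $G[f]\equiv 0$ and $G_r[g]\equiv 0$ pointwise on $\overline{\Omega}$, the integrand vanishes identically and this volume integral is zero. What remains is exactly the boundary integral plus the volume integral with kernels $B$ and $C$. It equals $f(x)+g(x)$ for $x\in\Omega$ and $0$ for $x\in\mathbb H\setminus\overline{\Omega}$, which is the asserted Cauchy-type formula.

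The only point that needs attention is whether the integral being dropped was well defined in the first place. The kernel $A(y,x)$ has a singularity of order $\|y-x\|^{-3}$ at $y=x$, which is integrable in $\mathbb R^4$. The factors $\|\underline{y}\|^{-2}$ are bounded because $\overline{\Omega}$ stays away from the real axis. Hence that volume integral converges absolutely, as already implicit in Theorem \ref{quaternionic_Borel-Pompeiu_form}, and setting an absolutely convergent integral of the zero function to zero is legitimate. No further work is expected; the main "obstacle" is just checking that the hypotheses of the two statements match.
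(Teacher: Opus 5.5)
Your proposal is correct and follows the intended argument. The paper quotes this result from \cite{GG1} as the specialization of Theorem \ref{quaternionic_Borel-Pompeiu_form} to $f\in\textrm{Ker}(G)$ and $g\in\textrm{Ker}(G_r)$, where the term $2\int_\Omega[A(y,x)G[f](y)-G_r[g](y)A(x,y)]\,dy$ drops out; this is exactly how the paper later obtains Corollary \ref{cor_quaternionic_Borel-Pompeiu_form_L_a} from Theorem \ref{quaternionic_Borel-Pompeiu_form_La}. Your integrability remark is harmless but unnecessary, since the dropped integrand is identically zero.
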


\begin{prop} \cite{G} \label{StokesG} {(Integral  Stokes' theorem, $   G$  and $  G_r$).}\label{prop3.8}
	Let $\Omega\subset\mathbb H\setminus \mathbb R$ be a    domain such that  $\partial \Omega \subset \mathbb H\setminus \mathbb R$ is a 3-dimensional compact smooth surface. Then 
	$$  \displaystyle  \int_{\partial \Omega} g\nu_x  f = 4 \int_{\Omega} g  \frac{\underline{x} }{\| \underline{x} \|^2} f dx + 2 \int_{\Omega}  \frac{1}{\|\underline{x} \|^2}   \left(    G_r[g] f  +    g   G[f]  \right)dx  $$
	for all $f,g\in C^1(\overline{\Omega},\mathbb H)$.
\end{prop}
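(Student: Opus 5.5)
The plan is to derive the identity from the classical Gauss (Stokes) theorem for quaternion-valued differential forms on $\Omega$. Since $\overline{\Omega}\subset\mathbb H\setminus\mathbb R$, the coefficients $\underline{x}/\|\underline{x}\|^2$ are smooth there. Write $\sigma_x=\sum_{k=0}^3(-1)^k e_k\, d\hat x_k$ for the usual quaternionic surface form. Up to the sign convention for $d\hat x_k$, chosen so that $d(\,\cdot\, d\hat x_k)=\partial_{x_k}(\cdot)\,dx$, one has
$$\nu_x = 2\,d\hat x_0+2\frac{\underline{x}}{\|\underline{x}\|^2}\sum_{k=1}^3 x_k\,d\hat x_k .$$
The key observation is that
$$\frac{1}{\|\underline{x}\|^2}G[f]=\partial_{x_0}f+\frac{\underline{x}}{\|\underline{x}\|^2}\sum_{k=1}^3x_k\,\partial_{x_k}f ,$$
so $\nu_x$ is exactly the $3$-form whose exterior derivative pairs with $f$ to produce $G[f]/\|\underline{x}\|^2$.

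The first step is to compute $d(g\,\nu_x\,f)$ for $f,g\in C^1(\overline\Omega,\mathbb H)$. By the Leibniz rule,
$$d(g\nu_xf)=2\Big[\partial_{x_0}(gf)+\sum_{k=1}^3\partial_{x_k}\Big(g\,\frac{x_k\underline{x}}{\|\underline{x}\|^2}\,f\Big)\Big]dx .$$
Expanding the second term produces three kinds of contributions:
\begin{enumerate}
\item terms with $\partial_{x_k}g$, which give $\frac{1}{\|\underline{x}\|^2}\,G_r[g]\,f$;
\item terms with $\partial_{x_k}f$, which give $\frac{1}{\|\underline{x}\|^2}\,g\,G[f]$;
\item terms in which the derivative falls on the coefficient $x_k\underline{x}/\|\underline{x}\|^2$.
\end{enumerate}
The terms of type 1 and 2 both come with the factor $2$, which gives the $2\int_\Omega\frac{1}{\|\underline{x}\|^2}(G_r[g]f+gG[f])\,dx$ term. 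Note that $\underline{x}$ commutes with the real scalars $x_k$, so for the $g$-derivatives it can be moved to the right to match $G_r$, and for the $f$-derivatives it can be left in place to match $G$.

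The main step is computing the divergence in type 3:
$$\sum_{k=1}^3\partial_{x_k}\Big(\frac{x_k\underline{x}}{\|\underline{x}\|^2}\Big).$$
The map $\underline{x}\mapsto x_k\underline{x}/\|\underline{x}\|^2$ is homogeneous of degree $0$. Euler's identity gives $\sum_k x_k\partial_{x_k}(\underline{x}/\|\underline{x}\|^2)=-\underline{x}/\|\underline{x}\|^2$, and $\sum_k\partial_{x_k}(x_k)=3$. Hence the divergence equals
$$3\frac{\underline{x}}{\|\underline{x}\|^2}-\frac{\underline{x}}{\|\underline{x}\|^2}=2\frac{\underline{x}}{\|\underline{x}\|^2}.$$
Multiplied by the overall factor $2$, this yields exactly $4\,g\,\frac{\underline{x}}{\|\underline{x}\|^2}\,f$, the first volume term.

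Finally, I apply Stokes' theorem $\int_{\partial\Omega}g\nu_xf=\int_\Omega d(g\nu_xf)$. This is legitimate because $\partial\Omega$ is a compact smooth surface avoiding $\mathbb R$ and all forms are $C^1$ up to the boundary. The main obstacle is bookkeeping rather than analysis: fixing orientation and sign conventions for $d\hat x_k$ so that $d(h\,d\hat x_k)=\partial_k h\,dx$, and keeping the noncommutative order $g\cdot(\text{coefficient})\cdot f$ throughout. I will also check the divergence constant (which gives the factor $4$) carefully, since an off-by-one in the Euler-homogeneity step would change it.
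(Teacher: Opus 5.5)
Your argument is correct. The paper gives no proof of this proposition; it imports the result from \cite{G}. Your derivation is a natural self-contained one: apply the classical Stokes theorem to the quaternion-valued $3$-form $g\nu_x f$, and split $d(g\nu_x f)$ by the noncommutative Leibniz rule into the pieces $G_r[g]f/\|\underline{x}\|^2$ and $gG[f]/\|\underline{x}\|^2$. The remaining step is the divergence
$$\sum_{k=1}^3\partial_{x_k}\bigl(x_k\underline{x}/\|\underline{x}\|^2\bigr)=2\,\underline{x}/\|\underline{x}\|^2 .$$
A componentwise check confirms it: the $e_j$-component is $4x_j/\|\underline{x}\|^2-2x_j/\|\underline{x}\|^2$.

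Two points are worth stating explicitly rather than leaving as bookkeeping.

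\emph{The sign convention is forced, not cosmetic.} With the unsigned convention $d\hat x_k=dx_0\wedge\cdots\wedge\widehat{dx_k}\wedge\cdots\wedge dx_3$ you get $dx_k\wedge d\hat x_k=(-1)^k dx$. The $k=1$ and $k=3$ terms would then flip sign, $d(g\nu_x f)$ would no longer produce $G$ and $G_r$, and the stated identity would fail. So the reading $dx_k\wedge d\hat x_k=dx$ that you adopt is the only one under which the proposition holds as written.

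\emph{Boundedness of $\Omega$ should be justified.} You need it both for Stokes' theorem and to have $\|\underline{x}\|$ bounded below on $\overline{\Omega}$. It follows from the hypotheses. Take $R$ with $\partial\Omega\subset B_R$; the connected set $\mathbb H\setminus\overline{B_R}$ misses $\partial\Omega$. So if $\Omega$ were unbounded, this set would lie entirely in $\Omega$ and hence contain real points, contradicting $\Omega\subset\mathbb H\setminus\mathbb R$.
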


\begin{defn}\label{Moebustrans}The quaternionic M\"obius transformations are represented   by 
\begin{align*} \mathcal   T(q)= (aq+b)(cq+d)^{-1}, \end{align*}  where $a,b,c,d\in \mathbb H$ such that   $b-ac^{-1}d \neq 0 $ if  $c\neq 0$ and $ad^{-1}\neq 0$ if $c=0$.  Recall that    any  quaternionic M\"obius transformation is a composition of a finite number of the following basic transformations: 
\begin{enumerate}
	\item Rotation,  
	${}_{u}R_v(x)=uxv$ for all $x\in \mathbb H$ where $u,v\in\mathbb S^3$.
	\item The  quaternionic inversion, $J(x)=x^{-1}=\displaystyle  \frac{\bar x}{\|x\|^2} $ for all $x\in \mathbb H\setminus\{0\}$.
	\item Translation,    $T_b(x) = x + b $ for all $x\in \mathbb H$ where $b \in \mathbb H  $.
	\item Dilations, $D_{\lambda}(x) = \lambda  x$ for all $x\in\mathbb H$ with $\lambda  >0$.
\end{enumerate}
Particularly, the quaternionic rotations preserving $\mathbb R^3$ are given by 
${}_{u}R_{\bar u} (x)=ux\bar u$ for all $x\in \mathbb H$ where $u\in\mathbb S^3$.
\end{defn}
\begin{defn}    {Given two domains $\Xi,\Omega\subset  \mathbb R^4$, let
		$\alpha:\Xi \to \Omega$ be a one-to-one correspondence;  if $f$
		belongs to a function space on $\Omega$, then  denote  $W_\alpha: f
		\mapsto f\circ
		\alpha$.}
	{If $\beta$ is a $\mathbb
		H$-valued function then  denote   $  \  {{}^\beta M}: f \mapsto  \beta f,\quad M^\beta : f
		\mapsto   f \beta$ on the same function space}
\end{defn}   

\begin{prop}\cite{GG2}\label{ConfCovPropG} (Conformal covariance property of $G$) 
Let $\Omega\subset\mathbb H$ be a domain and  
	\begin{align*} 
		{\mathcal T}(x)= (ax+b)(cx+d)^{-1} ,
	\end{align*}  
	where $a,b,c,d\in\mathbb H$ satisfy   {that}  $a\bar c, (b-ac^{-1} d)\bar c, d \overline{(b-ac^{-1}d)} \in \mathbb R$. 
	Define 
	\begin{align} \label{functions}
		& \mathcal A_{\mathcal T}(x)=  \bar c, \quad  
		\mathcal B_{\mathcal T}(y) =     \|c\|  \|b-ac^{-1}d\| \bar c  (y - ac^{-1})^{-2},
	\end{align}
	where $y = {\mathcal T}(x)$.  
	Then  
	\begin{align*}
		G\circ ({}^{\mathcal A_{\mathcal T}}M\circ W_{\mathcal T})= ({}^{\mathcal B_{\mathcal T}}M\circ W_{\mathcal T})\circ G
	\end{align*}
	on  $C^1({\mathcal T} (\Omega),\mathbb H)$.
\end{prop}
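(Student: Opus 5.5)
The plan is to reduce the general transformation to elementary ones. For each of these, $G$ becomes an operator acting slice by slice, and we compose at the end.

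\textbf{Step 1: the normal form of $\mathcal T$.} Take $c\neq 0$. The hypothesis $a\bar c\in\mathbb R$ gives $a=\lambda c$ with $\lambda\in\mathbb R$. Next, $(b-ac^{-1}d)\bar c\in\mathbb R$ gives $b-ac^{-1}d=\mu c$ with $\mu\in\mathbb R\setminus\{0\}$. Finally, $d\,\overline{(b-ac^{-1}d)}=\mu\, d\bar c\in\mathbb R$ gives $d=\nu c$ with $\nu\in\mathbb R$. Writing $\mathcal T(x)=ac^{-1}+(b-ac^{-1}d)(cx+d)^{-1}$, we obtain
$$\mathcal T(x)=\lambda+\mu\, c\,(x+\nu)^{-1}c^{-1}.$$
Thus $\mathcal T$ is a composition of the following maps:
\begin{itemize}
\item a real translation $T_\nu$;
\item the inversion $J$;
\item the rotation $x\mapsto c x c^{-1}=\frac{1}{\|c\|^2}\,c x\bar c$, which preserves $\mathbb R$ and $Vec(\mathbb H)$;
\item a real dilation by $\mu$ (with a sign allowed);
\item a real translation $T_\lambda$.
\end{itemize}
The case $c=0$ needs no factor of this kind, and it is handled by the same steps with the inversion omitted.

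\textbf{Step 2: slice form of $G$.} Write $x=u+Iv$ with $v=\|\underline{x}\|$. Then $\sum_j x_j\partial_{x_j}f=v\,\partial_v f$, and hence
$$G[f](u+Iv)=v^2\Big(\frac{\partial}{\partial u}+I\frac{\partial}{\partial v}\Big)f(u+Iv).$$
This is $v^2\bar\partial_I$ applied to $f|_{\mathbb C_I}$, with $I$ acting on the left.

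\textbf{Step 3: each elementary map.}
\begin{itemize}
\item \emph{Real translations.} The coefficients of $G$ depend only on $\underline{x}$, so $G$ commutes with $W_{T_\lambda}$ for $\lambda\in\mathbb R$.
\item \emph{Real dilations.} $G$ is a first order operator with quadratic coefficients. A chain-rule count therefore gives $G[f\circ D_\mu](x)=\mu^{-1}(G f)(\mu x)$.
\item \emph{Rotations $x\mapsto u x\bar u$ with $u\in\mathbb S^3$.} The chain rule gives $\sum_j x_j\partial_j(f\circ R)=\big(\sum_j y_j\partial_{y_j}f\big)\circ R$ and $\|\underline{x}\|=\|\underline{y}\|$. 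The vector coefficient transforms as $\underline{x}=\bar u\,\underline{y}\,u$. Hence $G[\bar u^{-1}\cdots]$ intertwines correctly, i.e. $G\circ({}^{\bar u}M\circ W_R)=({}^{\bar u}M\circ W_R)\circ G$ up to normalisation. This is where the left factor $\mathcal A_{\mathcal T}=\bar c$ originates.
\item \emph{Inversion.} $J$ maps $\mathbb C_I$ to itself via $z\mapsto 1/z=\bar z/|z|^2$. Since $f\circ J$ restricted to $\mathbb C_I$ is $f|_{\mathbb C_I}\circ J$, the antiholomorphic chain rule inside $\mathbb C_I$ gives $\bar\partial_I(f\circ J)(z)=-\bar z^{-2}\,(\bar\partial_I f)(1/z)$. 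Here $\bar z^{-2}\in\mathbb C_I$ sits on the left, which is legitimate because $I$ acts on the left. The imaginary parts satisfy $v_{1/z}=v/|z|^2$, so $v^2=|z|^4v_{1/z}^2$. Altogether,
$$G[f\circ J](x)=-|x|^4\bar x^{-2}\,(Gf)(x^{-1})=-\,y^{-2}(Gf)(y),\qquad y=x^{-1}.$$
This produces the factor $(y-ac^{-1})^{-2}=(y-\lambda)^{-2}$ once the final translation is included.
\end{itemize}

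\textbf{Step 4: composition.} We chain these intertwining relations in the order dictated by Step 1. At each stage the left multipliers are pushed through the next operator, using that they are either real or lie in the same slice. The product of the multipliers is then identified with $\mathcal B_{\mathcal T}(y)=\|c\|\,\|b-ac^{-1}d\|\,\bar c\,(y-ac^{-1})^{-2}$. The scalar $\|c\|\,\|b-ac^{-1}d\|=|\mu|\,\|c\|^2$ should come out of the dilation factor together with the normalisation of the rotation.

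I expect this last bookkeeping to be the main obstacle: checking that the quaternionic left factors from the rotation and the inversion combine, in the right order and without commutation errors, exactly into $\bar c$ on one side and $\mathcal B_{\mathcal T}$ on the other. If a direct composition is messy, I would instead verify the identity for $\mathcal T$ in the normal form of Step 1 by a single chain-rule computation in slice coordinates.
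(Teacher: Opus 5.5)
The paper gives no proof of this proposition; it imports it from \cite{GG2}. So your argument has to stand on its own.

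Steps 1--3 are correct:
\begin{itemize}
\item the normal form $\mathcal T(x)=\lambda+\mu\,c(x+\nu)^{-1}c^{-1}$;
\item the slice form $G=v^2(\partial_u+I\partial_v)$ on $\mathbb C_I$;
\item the elementary relations: $G\circ W_{T_\lambda}=W_{T_\lambda}\circ G$, $G[f\circ D_\mu]=\mu^{-1}(Gf)\circ D_\mu$, $G[\bar u\,f\circ R]=\bar u\,(Gf)\circ R$ (exactly, for $u\in\mathbb S^3$), and $G[f\circ J](x)=-y^{-2}(Gf)(y)$ with $y=x^{-1}$.
\end{itemize}
No multiplier ever has to be commuted past anything. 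The constant $\bar c$ can be absorbed into the function, $\bar c\,(m\circ J)=(\bar c\,m)\circ J$, until it meets the rotation, where the relation is exact. Every other factor appears outside $G$ and simply stacks. Step 4 is therefore not delicate bookkeeping but a one-line product. Using $(y-\lambda)^{-1}=\mu^{-1}c(x+\nu)c^{-1}$, hence $(x+\nu)^2\bar c=\mu^2\,\bar c\,(y-\lambda)^{-2}$, you get
\[
G[\bar c\, f\circ\mathcal T](x)=-\mu^{-1}(x+\nu)^2\,\bar c\,(Gf)(y)=-\mu\,\bar c\,(y-ac^{-1})^{-2}(Gf)(y)=-\overline{(b-ac^{-1}d)}\,(y-ac^{-1})^{-2}(Gf)(y).
\]

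This is where the proof breaks. The multiplier you obtain is $-\mathrm{sgn}(\mu)\|c\|^{-2}\,\mathcal B_{\mathcal T}$, not $\mathcal B_{\mathcal T}$. Your expectation that $\|c\|\,\|b-ac^{-1}d\|=|\mu|\,\|c\|^2$ comes out of the dilation and the rotation normalisation is wrong:
\begin{itemize}
\item the rotation relation is exact, so no power of $\|c\|$ is produced;
\item the dilation gives $\mu^{-1}$, which against $(x+\nu)^2$ yields $\mu$ with its sign, not $|\mu|$;
\item the inversion contributes a factor $-1$.
\end{itemize}
The fault is in the statement as printed, not in your method, because that statement is false. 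For $\mathcal T(x)=x^{-1}$ (take $a=d=0$, $b=c=1$) it asserts $G[f\circ J]=y^{-2}(Gf)\circ J$. But $f(x)=x_0$ gives $G[f\circ J](x)=-x^2|\underline{x}|^2/\|x\|^4=-y^{-2}(Gf)(y)$, which is nonzero off the real axis. Moreover, replacing $(a,b,c,d)$ by $(ta,tb,tc,td)$ with $t>0$ leaves $\mathcal T$ and the hypotheses unchanged. Yet the left side scales by $t$ and the right side by $t^3$.

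So no arrangement of Step 4 can reach $\mathcal B_{\mathcal T}$. What your steps actually prove is the corrected identity $G\circ({}^{\bar c}M\circ W_{\mathcal T})=({}^{\widetilde{\mathcal B}}M\circ W_{\mathcal T})\circ G$ with $\widetilde{\mathcal B}(y)=-\overline{(b-ac^{-1}d)}\,(y-ac^{-1})^{-2}$. This coincides with the printed $\mathcal B_{\mathcal T}$ only when $\|c\|=1$ and $\mu=(b-ac^{-1}d)c^{-1}<0$. You should state and prove it in that form, and say explicitly that the normalisation in the given statement cannot hold. (For $c=0$ one has $\mathcal A_{\mathcal T}=0$ and $\mathcal B_{\mathcal T}$ involves $c^{-1}$, so that case is degenerate rather than ``the same steps''.)
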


 \section{Main results}\label{MR}

\subsection{Cliffordian framework}\label{CLF}

This section  introduces an extension of the function theory induced by $G$  in Clifford analysis   framework and its consequence for the $(n+1)$-dimensional material derivative.

\begin{defn}\label{Def111}
	Let $U,V\subset \mathbb R^{n }$  be two domains and 
	let  $(s,t) \subset \mathbb R$ be an open interval.  Given    
	$b \in C^1((s,t)\times V, \mathbb R_{0,n})$ and  a diffeomorphism  $\underline{a}=\sum_{i=1}^n a_i e_i  \in C^1(   U,  V)$, where $a_i$ is a real-valued function for all $i$. Define  $a : (s,t)\times U \to (s,t)\times V$  given by  $$a(x) =x_0 + \sum_{i=1}^n a_ i (\underline{x}) e_i. $$
	Denote 	$y= a(x)$. Then    $x_0 =(a(x))_0 = y_0$,  $\underline{y}=    \underline{ a}( \underline{ x})  $ for all $x\in (s,t)\times U$, and  
		$$a^{-1}(y) =y_0 +  \sum_{j=1}^n (a^{-1}) _ j(\underline{y}) e_j, $$ where 
		$(a^{-1}) _ j$ is the j-th real-component of $(\underline{a})^{-1}$ for $j=1,\dots ,n$. 
Given  $f\in C^{1} ((x,t)\times U,\mathbb R_{0,n} )$ define:
 \begin{align*}
 H_{a,b}[f] (x):= & ( G [b  ]\circ a )(x) f (x) +     |\underline{a }(x)|^2 \ ( b\circ a ) (x) \frac{\partial  f }{\partial x_0} (x)  \\
   &  + \underline{a } (x)\
	( b \circ a ) (x) \sum_{j=1}^n  \left(  \sum_{i=1}^n  a _i (x) \   \frac{\partial   ( a^{-1})_j  }{\partial  y_i} \circ a  (x) \right) \frac{\partial  f  }{\partial  x_j}(x) , \quad \forall x\in (s,t)\times U.  
	\end{align*}	 \end{defn}

From now on we shall assume the hypotheses and notations of functions $a,b$ and their domains    introduced in the previous definition throughout this subsection.
 Also given the functions $f$,  $\alpha$ and $\beta$ we shall denote  $W_\alpha[ f]= f\circ
		\alpha$ and $  {{}^\beta M}[ f ]= \beta f $ if the composition and product are valid, respectively.

\begin{prop}\label{HyG} 
$H_{a,b} =W_{a } \circ G \circ {}^{b}M\circ W_{a^{-1}}  $ on  $C^{1} ( (s,t)\times U, \mathbb R_{0,n})$
 or equivalently,  
 $$H_{a,b}[f]=W_{a } \circ G \circ {}^{b}M\circ W_{a^{-1}}  [f]  , 
 \quad \forall  f\in C^{1} ((x,t)\times U,\mathbb R_{0,n}).$$
\end{prop}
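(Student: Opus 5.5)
We will verify the identity by a direct chain-rule computation. Fix $f\in C^1((s,t)\times U,\mathbb R_{0,n})$ and set $h:= {}^{b}M\circ W_{a^{-1}}[f] = b\cdot (f\circ a^{-1})$, a function on $(s,t)\times V$ in the variable $y=y_0+\underline y$. We then evaluate $G[h](y)$ and finally compose with $a$, i.e. put $y=a(x)$, so that $y_0=x_0$ and $\underline y=\underline a(\underline x)$. In particular $|\underline y|^2=|\underline a(x)|^2$ and the $1$-vector factor in $G$ becomes $\underline a(x)$.

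\textbf{Step 1 (Leibniz rule).} Since $G$ is a first order operator whose coefficients $|\underline y|^2$ and $\underline y$ sit to the left, and $b$ is to the left of $f\circ a^{-1}$, we obtain
\begin{align*}
G[b\,(f\circ a^{-1})](y)= G[b](y)\,(f\circ a^{-1})(y) + |\underline y|^2 b(y)\frac{\partial (f\circ a^{-1})}{\partial y_0}(y) + \underline y\, b(y)\sum_{i=1}^n y_i \frac{\partial (f\circ a^{-1})}{\partial y_i}(y).
\end{align*}
The derivatives hitting $b$ recombine exactly into $G[b]$. The remaining terms keep the order $|\underline y|^2 b$ and $\underline y\, b$, which matters because the algebra is noncommutative. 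Composing the first term with $a$ gives $(G[b]\circ a)(x)f(x)$, as required.

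\textbf{Step 2 (chain rule for $f\circ a^{-1}$).} Because $a^{-1}(y)=y_0+\sum_j (a^{-1})_j(\underline y)e_j$ leaves the time coordinate unchanged, we have
\begin{align*}
\frac{\partial (f\circ a^{-1})}{\partial y_0}=\frac{\partial f}{\partial x_0}\circ a^{-1}, \qquad \frac{\partial (f\circ a^{-1})}{\partial y_i}=\sum_{j=1}^n \frac{\partial (a^{-1})_j}{\partial y_i}\,\Big(\frac{\partial f}{\partial x_j}\circ a^{-1}\Big).
\end{align*}
Substituting $y=a(x)$, and using $y_i=a_i(x)$ and $a^{-1}(a(x))=x$, the second term becomes $|\underline a(x)|^2(b\circ a)(x)\,\partial_{x_0}f(x)$. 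The third becomes
\begin{align*}
\underline a(x)(b\circ a)(x)\sum_j\Big(\sum_i a_i(x)\,\frac{\partial (a^{-1})_j}{\partial y_i}\circ a(x)\Big)\partial_{x_j}f(x),
\end{align*}
after exchanging the finite sums. The scalar Jacobian entries commute with everything, so they can be moved freely.

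\textbf{Main obstacle.} The only delicate point is bookkeeping rather than any real difficulty. One must check that the real-valued Jacobian factors may be pulled past the Clifford-valued $b$. One must also confirm that the domains match: $a$ maps $(s,t)\times U$ diffeomorphically onto $(s,t)\times V$, so all compositions are defined and $C^1$. Note that the statement's "$(x,t)$" should read $(s,t)$. Comparing the result term by term with Definition \ref{Def111} gives $W_a\circ G\circ {}^bM\circ W_{a^{-1}}[f]=H_{a,b}[f]$.
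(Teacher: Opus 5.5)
Your proof is correct and matches the paper's: expand $G[b\,(f\circ a^{-1})]$ by the Leibniz rule to isolate $G[b]\,(f\circ a^{-1})$, apply the chain rule (using that $a^{-1}$ fixes the $y_0$ coordinate), and substitute $y=a(x)$. Your added remarks on keeping $b$ to the right of $|\underline y|^2$ and $\underline y$, on the real Jacobian factors commuting with everything, and on the typo $(x,t)$ are accurate bookkeeping within that same computation.
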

\begin{proof} From direct computations we see that
	\begin{align*}
		&  W_{a } \circ G \circ {}^{b}M \circ W_{a^{-1}}  [f] =  W_{a }( G ( b f \circ {a^{-1}} ) )
		\\
		=& W_{a }( G ( b ) f \circ {a^{-1}}   )   +    
		W_{a } \left( |\underline{y}|^2  b  \frac{\partial  f \circ a^{-1} }{\partial y_0}  + \underline{y} 
		b \sum_{i=1}^n   y_i  \frac{\partial  f \circ a^{-1}  }{\partial  y_i}  \right) \\
		= & ( G [b  ]\circ a ) f +     |\underline{a }|^2 ( b\circ a )  \frac{\partial  f }{\partial x_0}    + \underline{a } 
		( b \circ a ) \sum_{j=1}^n  \left(  \sum_{i=1}^n  a _i    \frac{\partial   ( a^{-1})_j  }{\partial  y_i} \circ a \right) \frac{\partial  f  }{\partial  x_j}    
	\end{align*}   	
\end{proof}

\begin{cor}\label{corKerHKerG}
$f\in \textrm{Ker}(H_{a,b})$ if and only if \ ${}^{b}M \circ W_{a^{-1}}  [f]\in\textrm{Ker}(G)$.
\end{cor}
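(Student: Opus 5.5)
The corollary follows from the factorization in Proposition \ref{HyG}: $H_{a,b}=W_a\circ G\circ {}^{b}M\circ W_{a^{-1}}$. The outer operator $W_a$ is composition with the bijection $a:(s,t)\times U\to(s,t)\times V$, so it cannot create or destroy zeros. We therefore aim to show that $H_{a,b}[f]$ vanishes identically on $(s,t)\times U$ exactly when $G[\,b\,(f\circ a^{-1})\,]$ vanishes identically on $(s,t)\times V$.

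First, for $f\in C^1((s,t)\times U,\mathbb R_{0,n})$, set $g:={}^{b}M\circ W_{a^{-1}}[f]=b\,(f\circ a^{-1})$. Since $\underline a$ is a $C^1$ diffeomorphism, $a^{-1}$ is $C^1$ with $a^{-1}(y)=y_0+\sum_j (a^{-1})_j(\underline y)e_j$. Together with $b\in C^1$, this gives $g\in C^1((s,t)\times V,\mathbb R_{0,n})$, so $G[g]$ is defined and continuous. By Proposition \ref{HyG},
$$H_{a,b}[f](x)=G[g](a(x)),\qquad \forall x\in(s,t)\times U.$$

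For the forward direction, suppose $H_{a,b}[f]=0$. Given any $y\in(s,t)\times V$, surjectivity of $a$ lets us write $y=a(x)$, and then $G[g](y)=H_{a,b}[f](x)=0$. Hence $g\in\textrm{Ker}(G)$. For the converse, suppose $G[g]\equiv 0$ on $(s,t)\times V$. Since $a$ maps $(s,t)\times U$ into $(s,t)\times V$, the identity above gives $H_{a,b}[f](x)=0$ for every $x$, so $f\in\textrm{Ker}(H_{a,b})$.

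There is no serious obstacle; this is bookkeeping. The two points to check are the $C^1$ regularity of $g$, which makes the kernel of $G$ in the sense of $\mathcal{GM}$ meaningful, and the bijectivity of $a$ on the stated product domains. Both follow directly from the hypotheses of Definition \ref{Def111}.
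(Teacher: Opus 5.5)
Your proposal is correct and takes the same route as the paper, which states this corollary without proof as an immediate consequence of the factorization $H_{a,b}=W_a\circ G\circ {}^{b}M\circ W_{a^{-1}}$ in Proposition \ref{HyG}. You make explicit the two facts the paper uses tacitly: that $b\,(f\circ a^{-1})$ is $C^1$, and that $a$ maps $(s,t)\times U$ bijectively onto $(s,t)\times V$, so $G[b\,(f\circ a^{-1})]\circ a\equiv 0$ holds exactly when $G[b\,(f\circ a^{-1})]\equiv 0$.
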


\begin{rem}
	Although the operator $H_{a,b}$ is obtained from $G$ as shown in Proposition \ref{HyG} and 
	  we can surely see that many properties of  $H_{a,b}$   are consequences of the developed theory of $G$, we must note that  $H_{a,b}$ is a  generalization of   $G$   since if $U=V$, $a$ is the identity function $I$ and $b$ is the  constant function $1$ then we obtain  that $H_{I,1} = G$
	 on  $C^{1} ( (s,t)\times U, \mathbb R_{0,n})$. Therefore, the study of the operator $H_{a,b}$ offers an extension of the theory on the operator $G$ and 
	 all results presented in Section \ref{prel} can be used to show many properties of operator $H_{a,b}$ using  Proposition \ref{HyG} and the operator $G$ with direct computations extending the function theories  	   $\mathcal{SM}((s,t)\times U)$, $\mathcal { HM}((s,t)\times U)$ and  $\mathcal {GM}((s,t)\times U)$.

	On the other hand, it is important to comment that the  part    
	$$ \sum_{j=1}^n  \left(  \sum_{i=1}^n  a _i    \frac{\partial   ( a^{-1})_j  }{\partial  y_i} \circ a \right) \frac{\partial  f  }{\partial  x_j},$$
of   $H_{a,b}[f]$  	is a particular case  of the operator: 
	$$ \sum_{j=1}^n   u_j \frac{\partial  f  }{\partial  x_j},$$
when $ u_j=    \sum_{i=1}^n  a _i    \dfrac{\partial   ( a^{-1})_j  }{\partial  y_i} \circ a  $ for $j=1,\dots, n$,  which is an important part  of  the multidimensional material derivative.
\end{rem}

As our objective is to show new  properties of $D_u$ we are going to assume   that $b=1$. 

\begin{defn} Denote  $\mathcal H_a = H_{a,1}$ on  $C^{1} ( (s,t)\times U, \mathbb R_{0,n})$, i.e., 
	\begin{align*} \mathcal H_a (f) (x) := {\underline a}   ( {x})  \frac{\partial f}{\partial x_0} ( x)  
		- \sum_{i=1}^n     \left( \sum_{j=1}^n   a _j  ( {x}) \frac{\partial (a^{-1})_i  }{\partial  y_j}\circ    a  ( {x})  \right)  \frac{\partial f}{\partial  x_i} (x),  
	\end{align*}
	for all $x\in (s,t)\times U$ and all $f\in C^{1} ((s,t)\times U, \mathbb R_{0,n})$.
\end{defn}

\begin{rem}\label{remDyH}	
From Proposition \ref{HyG} 	one has that 
	\begin{align}\label{GyH}
		W_{a } \circ G  \circ W_{a^{-1}}  [f] = G[f\circ a^{-1}]\circ a  = -\underline{a} \mathcal H_a (f), \quad  \forall  f\in C^{1}((s,t)\times U , \mathbb R_{0,n})
	\end{align}
and supposing that  
$$u_i =   \sum_{j=1}^n a_j  \frac{\partial (a^{-1})_i  }{\partial  y_j}\circ    a    ,$$ 
for $i=1,\dots, n$, then $\mathcal H_a$ has a  direct participation in the multidimensional  incompressible Navier-Stokes equations.   What is more,  the  multidimensional material derivative satisfies 
	\begin{align*} {\mathrm D}_u (f) = &  \frac{\partial  f}{\partial x_0 } 
		+ \sum_{j=1}^n u_j \frac{\partial f }{\partial x_j } 
		=  (1  +{\underline a} ) 
		{\frac {\partial f}{\partial x_0}}
		- \mathcal H_a(f)  , \quad \forall f\in C^1((s,t)\times U, R^n)    
	\end{align*}
	and the $n-$dimensional incompressible Navier-Stokes equations are given by
	\begin{align*}
		(1  +{\underline a} )   \frac{\partial u}{\partial x_0}    
		- \mathcal H_a(u)  - \alpha \sum_{j=1}^n   \frac{\partial^2 u}{\partial x_j^2 }  =-  \nabla p+ f(x, t).\end{align*}
\end{rem} 

\begin{ex}\label{examples}
We will see some examples of   $\mathcal H_a$   in terms of  simple functions $a$ but that provide us with interesting generalizations of $G$  and  all its consequences in certain function theories.
 Assume that   $a_j(x)= a_j(x_j) = y_j$ for $j=1,\dots, n$, then   
	\begin{align*} \mathcal H_a (f) (x) = {\underline a}  (\underline{x})  \frac{\partial f}{\partial x_0} ( x)  
		- \sum_{i=1}^n     \frac{ a _j  (\underline{x})}{a'_j  (\underline{x})}   \frac{\partial f}{\partial  x_i} (x) , 
	\end{align*}
	for all $f\in C^1((s,t)\times U,\mathbb R_{0,n})$. In the following examples we consider particular cases of domains $(s,t)\times U$ in which we admit $(s,t)=(-\infty, \infty)$, for short explanations. 
	\begin{enumerate}
		\item If  $\displaystyle  a(x)= x_0+  \sum_{j=1}^n(r_j{x_j}+ s_j) e_j$ for all $x_0,y_0, y_j,x_j\in \mathbb R$,  for $j=1,\dots, n$, where $r_j\neq 0 $ for $j=1,\dots, n$, then  
		\begin{align*} \mathcal  H_a (f) (x) = \sum_{j=1}^n (r_j{x_j}+ s_j) e_j \frac{\partial f}{\partial x_0} ( x)  
			-   \sum_{i=1}^n     ( x_i + \frac{s_i}{r_i})    \frac{\partial f}{\partial  x_i} (x). 
		\end{align*} 
		
		\item Given $\alpha_1,\dots,\alpha_n\in \mathbb R\setminus \{0\}$ and    $ \displaystyle  a(x)= x_0+  \sum_{j=1}^n{x_j}^{\alpha_j} e_j$, 
		where $x_0,y_0 \in \mathbb R$,  $y_j,x_j >0$  for $j=1,\dots, n$.
		Then  
		\begin{align*}  \mathcal  H_a (f) (x) = \left(   \sum_{j=1}^n{x_j}^{\alpha_j} e_j \right)  \frac{\partial f}{\partial x_0} ( x)  
			-   \sum_{i=1}^n     \frac{x_i}{ \alpha_i}    \frac{\partial f}{\partial  x_i} (x). 
		\end{align*}
				
		\item Set  $\displaystyle  a(x)= x_0+ \sum_{j=1}^n e^{x_j} e_j$,  where $x_0, y_0\in \mathbb R$ and  $y_j,x_j \in (0,\infty)$,  for $j=1,\dots, n$. Therefore,
		
		\begin{align*}  \mathcal  H_a (f) (x) = \sum_{j=1}^n e^{x_j} e_j   \frac{\partial f}{\partial x_0} ( x)  
			-  \sum_{i=1}^n         \frac{\partial f}{\partial  x_i} (x) .
		\end{align*}
		
		\item Set  $ \displaystyle  a(x)= x_0+ \sum_{j=1}^n \sin({x_j}) e_j$,  
		where  $x_0,y_0, y_j,x_j\in (0, \pi)$,   for $j=1,\dots, n$. Then
		
		\begin{align*}  \mathcal  H_a (f) (x) = \sum_{j=1}^n \sin({x_j}) e_j  \frac{\partial f}{\partial x_0} ( x)  
			-  \sum_{i=1}^n   \tan(x_i)      \frac{\partial f}{\partial  x_i} (x). 
		\end{align*}
		
		\item Set  $\displaystyle  a(x)= x_0+ \sum_{j=1}^n \ln({x_j}) e_j $,  
		where $x_0, y_0\in \mathbb R$ and  $y_j,x_j \in (0,\infty)$,  for $j=1,\dots, n$. Therefore,
		
		\begin{align*}  \mathcal  H_a (f) (x) = & \sum_{j=1}^n \ln({x_j}) e_j   \frac{\partial f}{\partial x_0} ( x)  
			-  \sum_{i=1}^n    x_i \ln(x_i)    \frac{\partial f}{\partial  x_i} (x) 
			\\
			= & \sum_{j=1}^n \ln({x_j}) \left(e_j   \frac{\partial f}{\partial x_0} ( x)  
			-    x_j   \frac{\partial f}{\partial  x_j} (x) \right).\end{align*}
	\end{enumerate}
\end{ex}
\begin{prop} \label{propSMHMH} Let  $a((s,t)\times U )\subset \mathbb R^{n+1}$ be  an axially symmetric s-domain and given $f\in C^1((s,t)\times U ),  \mathbb R_{0,n} )$.
	\begin{enumerate}
		\item  If  $
		f\circ  a^{-1}   \in \mathcal { SM}( (s,t)\times V))$  then   $ \mathcal  H_a[f]=0$ on $(s,t)\times U$.
		\item  If $f\circ  a^{-1} \in \mathcal{ HM}((s,t)\times V)$  then  $ \mathcal  H_a[f]=0$ on $(s,t)\times  U$.
	\end{enumerate}
\end{prop}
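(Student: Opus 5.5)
The plan is to reduce everything to the known theory of $G$ through the intertwining identity of Remark~\ref{remDyH}, namely
\begin{align*}
G[f\circ a^{-1}]\circ a = -\underline{a}\, \mathcal H_a(f), \qquad f\in C^1((s,t)\times U,\mathbb R_{0,n}),
\end{align*}
which comes from Proposition~\ref{HyG} with $b=1$. Set $g=f\circ a^{-1}$ on $a((s,t)\times U)=(s,t)\times V$. Since $\underline a$ is a diffeomorphism onto $V$ and $a$ leaves $x_0$ unchanged, $g$ is $C^1$ on this set. By hypothesis $(s,t)\times V$ is an axially symmetric s-domain.

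For item 1, assume $g\in\mathcal{SM}((s,t)\times V)$. Part 1 of Theorem~\ref{teoHGM} then gives $g\in\mathcal{GM}((s,t)\times V)$, so $G[g]=0$ on $(s,t)\times V$. Composing with $a$ and using the identity above yields $\underline a(x)\,\mathcal H_a(f)(x)=0$ for every $x\in(s,t)\times U$. For item 2 the argument is identical, except that we invoke part 2 of Theorem~\ref{teoHGM}, which only requires axial symmetry, to get $G[g]=0$.

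The remaining step, and the only real obstacle, is cancelling the factor $\underline a(x)$. Whenever $\underline a(x)\neq 0$, this 1-vector is invertible in $\mathbb R_{0,n}$, with $\underline a(x)^{-1}=-\underline a(x)/|\underline a(x)|^2$. Multiplying on the left gives $\mathcal H_a(f)(x)=0$.

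The set $\{x:\underline a(x)=0\}$ is the preimage under $a$ of the real axis meeting $(s,t)\times V$. Because $\underline a$ is injective, this set is $(s,t)\times\{\underline a^{-1}(0)\}$, which has empty interior, so its complement is dense. Since $f\in C^1$ and $a$ is $C^1$, the function $\mathcal H_a(f)$ is continuous, and continuity extends the vanishing to all of $(s,t)\times U$.

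Before relying on this, I would also double-check the sign and form of the identity in Remark~\ref{remDyH}. Expanding $W_a\circ G\circ W_{a^{-1}}$ directly, the $x_0$-term is $|\underline a|^2\partial_{x_0}f$ and the remaining term is $\underline a\sum_j u_j\partial_{x_j}f$. Factoring out $-\underline a$ and using $\underline a^2=-|\underline a|^2$ should reproduce $-\underline a\,\mathcal H_a(f)$, which confirms the identity up to the conventions the paper uses.
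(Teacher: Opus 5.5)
Your proposal is correct and follows the paper's proof: apply Theorem~\ref{teoHGM} to $f\circ a^{-1}$ to get $G[f\circ a^{-1}]=0$, then use the identity $G[f\circ a^{-1}]\circ a=-\underline a\,\mathcal H_a(f)$ from \eqref{GyH}. Your final cancellation step is more careful than the paper's, which only says ``$a$ is a diffeomorphism''; because an s-domain meets the real axis, $\underline a$ really does vanish on the segment $(s,t)\times\{\underline a^{-1}(0)\}$, and your argument (invert $\underline a$ where it is nonzero, then extend by density and continuity) is what actually closes that step.
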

\begin{proof} If 	$f\circ  a^{-1}   \in \mathcal { SM}( (s,t)\times V )$  or $f\circ  a^{-1} \in \mathcal{ HM}((s,t)\times V)$ then  Theorem \ref{teoHGM}   
 shows us that  $f\circ  a^{-1}    \in\mathcal{ GM} ( (s,t)\times V)$.  From  \eqref{GyH} we see that 
 $0= G[f\circ a^{-1}]\circ a  = -\underline{a} \mathcal H_a (f)$ on  $(s,t)\times U$. Recall that  $a$ is a diffeomorphism. Therefore   $ \mathcal H_a (f)=0$ on  $(s,t)\times U$.
\end{proof}

\begin{prop}\label{kerHPower}(On $\textrm{Ker}(\mathcal H_a)$ and the power series development). 
 Given   $f\in C^1((s,t)\times U ,  \mathbb R_{0,n} )$ such that 
   $f\circ a^{-1}\in \mathcal {SM} ((s,t)\times V) $  and $\mathbb B^{n+1}(0,1) \subset (s,t)\times V$.  Then  $f\in \textrm{Ker}(\mathcal H_a)\cap C^1((s,t)\times U ,\mathbb R_{0,n}) $ and there exists a sequence $(a_n)_{n\geq 0}$ of elements of $\mathbb R_{0,n}$ such that 
	\begin{align*}  f (x)=   \sum_{n=0}^\infty (a(x))^n a_n,\quad \forall x\in a^{-1}(\mathbb B^{n+1}(0,1)).  
	\end{align*}
\end{prop}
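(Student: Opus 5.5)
The proof has two parts: membership of $f$ in the kernel, and the power series expansion. Both reduce to statements about $g:=f\circ a^{-1}$ on $(s,t)\times V$.

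\emph{Kernel membership.} The hypothesis says $g\in\mathcal{SM}((s,t)\times V)$. I would argue as in Proposition \ref{propSMHMH}(1). Theorem \ref{teoHGM}(1) gives $g\in\mathcal{GM}((s,t)\times V)$, that is, $G[g]=0$. By \eqref{GyH},
\[
0=G[f\circ a^{-1}]\circ a=-\underline{a}\,\mathcal H_a(f).
\]
To cancel the factor $\underline{a}$, I note that $\underline a(x)$ is a nonzero $1$-vector off the set where $\underline a$ vanishes, and then $\underline a^{-1}=-\underline a/|\underline a|^2$. So $\mathcal H_a(f)=0$ wherever $\underline a(x)\neq0$. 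Since $\underline a$ is a diffeomorphism, the set where it vanishes is at most the slice $\{x_0\}\times\{\underline a^{-1}(0)\}$, which has empty interior. As $\mathcal H_a(f)$ is continuous for $f\in C^1$, it vanishes everywhere. Theorem \ref{teoHGM}(1) needs an axially symmetric s-domain; I would assume this is implicit, as in Proposition \ref{propSMHMH}. Alternatively, one can verify $G[g]=0$ directly on each slice $\mathbb C_I$, where $G$ reduces to $|\underline x|\,\underline x\,(\partial_u+I\partial_v)$ after writing $x=u+Iv$. This local computation avoids the domain assumption.

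\emph{Power series.} Since $\mathbb B^{n+1}(0,1)\subset(s,t)\times V$, the restriction $g|_{\mathbb B^{n+1}(0,1)}$ is slice monogenic on the ball; being slice monogenic is a pointwise condition on each slice, so it is inherited by restriction. Proposition \ref{Power} then gives coefficients $(a_n)\subset\mathbb R_{0,n}$ with $g(y)=\sum_{n\ge0}y^na_n$ for all $y\in\mathbb B^{n+1}(0,1)$. For $x\in a^{-1}(\mathbb B^{n+1}(0,1))$, put $y=a(x)$; then $f(x)=g(a(x))=\sum_n (a(x))^na_n$. Here $a(x)=x_0+\underline a(\underline x)$ is a paravector, so the powers make sense in $\mathbb R_{0,n}$.

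\emph{Main obstacle.} The only delicate point is dividing by $\underline a$ in the kernel step, which is handled by the continuity and density argument above. Everything else is a direct transfer of known results on $G$ through the conjugation $W_a\circ G\circ W_{a^{-1}}$ of Proposition \ref{HyG}.
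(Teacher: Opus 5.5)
Your proposal is correct and follows the paper's route: slice monogenicity of $f\circ a^{-1}$ gives $G[f\circ a^{-1}]=0$, \eqref{GyH} transfers this to $\mathcal H_a(f)=0$, and Proposition \ref{Power} with the substitution $y=a(x)$ gives the series. Two points where you are more careful than the paper: your continuity/density argument for cancelling $\underline a$ (which vanishes only on the line $(s,t)\times\{\underline a^{-1}(0)\}$), and your use of slice monogenicity on all of $(s,t)\times V$ rather than only on the ball. One small slip in your alternative argument: on $\mathbb C_I$ one has $G=|\underline x|^2(\partial_u+I\partial_v)$, not $|\underline x|\,\underline x\,(\partial_u+I\partial_v)$, but the two differ by the invertible left factor $\pm I$, so the kernel conclusion is unaffected.
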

\begin{proof}	As $f\circ a^{-1}\mid_{\mathbb B^{n+1}(0,1)}  \in \textrm{Ker}(   G)
	\cap C^1(\mathbb B^{n+1}(0,1),R^n) $. Then  
	from  \eqref{GyH} we have   $f\in \textrm{Ker}(\mathcal H_a)\cap C^1((s,t)\times U ,\mathbb R_{0,n}) $ and Proposition \ref{Power} implies that 
	 	\begin{align*}  f\circ a^{-1}\mid_{\mathbb B^{n+1}(0,1)}  (y) = \sum_{n=0}^\infty y^n a_n,\quad \forall y\in\mathbb B^{n+1}(0,1).  
	\end{align*}
	Therefore using $y=a(x)$ we obtain the power series.
\end{proof}
 
\begin{thm}\label{KerHRepr}
	(Representation formula). 
		Suppose that  $ (s,t)\times V  $ is an axially symmetric
	s-domain and  given   $f\in C^1((s,t)\times U ,  \mathbb R_{0,n} )$ such that 
  and   $f\circ a^{-1}\in \mathcal {SM} ((s,t)\times V) $. Then 
	$f  \in \textrm{Ker}(\mathcal H_a)$ and  for any $x\in  (s,t)\times U $ we have that  
	$$
	f   (x) = \frac{1}{2} (1 - I_y I)f (x_I^+) + \frac{1}{2}  (1 + I_y I) f( x_I^- ), \quad I\in \mathbb  S,
	$$where 
	$ a (x) = y =  u + I_y v $,  $ x_I^+=  a^{-1} (u + I_y  v)$ and  $ x_I^-=  a^{-1}(u - I_y  v)$.
\end{thm}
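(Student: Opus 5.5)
The plan is to move everything to the $y$-variables through the diffeomorphism $a$, apply the known results for slice monogenic functions there, and then pull back along $a^{-1}$. Set $g:=f\circ a^{-1}$ on $(s,t)\times V$. By hypothesis $g\in\mathcal{SM}((s,t)\times V)$ and $(s,t)\times V$ is an axially symmetric s-domain.

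For the kernel claim, Theorem \ref{teoHGM}(1) gives $g\in\mathcal{GM}((s,t)\times V)$, that is, $G[f\circ a^{-1}]=0$. Identity \eqref{GyH} then yields $0=G[f\circ a^{-1}]\circ a=-\underline{a}\,\mathcal H_a(f)$ on $(s,t)\times U$. This is exactly the argument of Proposition \ref{propSMHMH}(1), so I would simply cite it. To cancel the factor $\underline{a}$, note that a nonzero 1-vector is invertible in $\mathbb R_{0,n}$, since $\underline{a}^{-1}=-\underline{a}/|\underline{a}|^2$. This shows $\mathcal H_a(f)=0$ wherever $\underline{a}(x)\neq 0$. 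On the remaining set $\{\underline{a}=0\}$, which is the preimage of the real axis and hence nowhere dense because $a$ is a diffeomorphism, I would conclude by continuity of $\mathcal H_a(f)$. I expect this cancellation step to be the main (mild) obstacle, since the paper glosses it.

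For the representation formula, fix $x\in(s,t)\times U$ and write $y=a(x)=u+I_y v$ with $u=y_0=x_0$ and $v=|\underline{y}|$. Axial symmetry of $(s,t)\times V$ guarantees that $u\pm Iv$ lies in $(s,t)\times V$ for every $I\in\mathbb S$. Theorem \ref{Repth} applied to $g$ then gives
$$g(y)=\tfrac12(1-I_yI)\,g(u+Iv)+\tfrac12(1+I_yI)\,g(u-Iv).$$
Since $g(y)=f(a^{-1}(y))=f(x)$ and $g(u\pm Iv)=f(a^{-1}(u\pm Iv))$, substituting these yields the claimed identity. Here the points $x_I^{\pm}$ are to be read as $a^{-1}(u\pm Iv)$; the statement writes $I_y$ inside $x_I^\pm$, which for $I=I_y$ gives the trivial case. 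I would note this and prove the formula with $I$, which contains the stated version.

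The only remaining care is well-definedness of $a^{-1}(u\pm Iv)$. This holds because $a$ maps onto $(s,t)\times V$ and that set is axially symmetric, so these points lie in its range. The case $\underline{y}=0$ is also harmless: then $I_y$ is arbitrary, $v=0$, and the formula reduces to $f(x)=f(x)$.
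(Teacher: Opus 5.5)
Your proposal is correct and follows the paper's route: set $g=f\circ a^{-1}$, obtain the kernel statement from Theorem~\ref{teoHGM} via \eqref{GyH}, apply Theorem~\ref{Repth} to $g$, and substitute $y=a(x)$. Reading $x_I^{\pm}$ as $a^{-1}(u\pm Iv)$ is what the paper's proof actually uses. Your cancellation of $\underline{a}$ off the nowhere dense set $\{\underline{a}=0\}$, followed by continuity, fills in a step the paper only gestures at.
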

\begin{proof}
  Theorem \ref{Repth} and \eqref{GyH} give us that $f  \in \textrm{Ker}(\mathcal H_a)$ and  if  $I\in \mathbb  S$ then 
	$$
	f \circ a^{-1}  (y) = \frac{1}{2} (1 - I_y I)f\circ a^{-1} (u + I  v) + \frac{1}{2}  (1 + I_y I) f( u - I v ),  
	$$ where $y= u + I_y v$. The main identity is obtained using 
	$y= a(x) $,  $ x_I^+=  a^{-1} (u + I  v)$ and  $ x_I^-=  a^{-1}(u - I  v)$.

\end{proof}

\begin{lem}\label{kerHSplit} (Splitting Lemma). 
	Suppose that  $ (s,t)\times V  $ is an axially symmetric
	s-domain and  given $f\in C^1((s,t)\times U ,  \mathbb R_{0,n} )$ such that 
	 $f\circ a^{-1}\in \mathcal {SM} ((s,t)\times V) $. Then   
	$f  \in \textrm{Ker}(\mathcal H_a)$ and for every $I = I_1 \in \mathbb S$, let $ I_2, 
	\dots , I_n$ be
	a completion to a basis of $\mathbb R_{0,n}$ satisfying the defining relations $I_rI_s + I_sI_r 
	= -2\delta_{r,s}$. Then there exist $2^{n-1}$ holomorphic functions $F_A :( (s,t)\times V  )_{I}= ( (s,t)\times V  ) \cap \mathbb C_I\to \mathbb C_I$ such that
	for every $x\in a^{-1}( ( (s,t)\times V  )_{I} )$  we have 
	$$f(x) = \sum^{n-1}_{|A|=0}
	F_A(a(x))I_A, \quad  I_A = I_{i_1}\cdots I_{i_s},$$
where $A = i_1\dots i_s$ is a subset of $\{2, . . . , n\}$, with $i_1 < \dots < i_s$, or $|A| = 0$,  $I_{\emptyset} = 1$.
\end{lem}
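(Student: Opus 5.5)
The plan is to transport the classical Splitting Lemma through the diffeomorphism $a$, exactly as in the proofs of Proposition \ref{kerHPower} and Theorem \ref{KerHRepr}. Set $g:=f\circ a^{-1}$, defined on $(s,t)\times V$. By hypothesis $g\in\mathcal{SM}((s,t)\times V)$, and $(s,t)\times V$ is an axially symmetric s-domain.

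The first claim, $f\in\textrm{Ker}(\mathcal H_a)$, follows from Theorem \ref{teoHGM}(1). It gives $g\in\mathcal{GM}((s,t)\times V)$, that is, $G[f\circ a^{-1}]=0$. Identity \eqref{GyH} then yields
$$-\underline{a}\,\mathcal H_a(f)=G[f\circ a^{-1}]\circ a=0.$$
To cancel $\underline{a}$ we argue as in Proposition \ref{propSMHMH}. Wherever $\underline{a}(x)\neq 0$, the paravector $\underline{a}(x)$ is invertible in $\mathbb R_{0,n}$, since $\underline a^2=-|\underline a|^2$. At points where $\underline{a}(x)=0$, we pass to the limit using continuity of $\mathcal H_a(f)$. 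This works provided the zero set of $\underline a$ has empty interior, which holds because $\underline a$ is a diffeomorphism and so the zero set is at most a single point of $U$ times $(s,t)$. Alternatively, Proposition \ref{propSMHMH}(1) can simply be invoked directly.

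For the decomposition, fix $I=I_1\in\mathbb S$ and a completion $I_2,\dots,I_n$. Lemma \ref{Split} applied to $g$ on $(s,t)\times V$ gives $2^{n-1}$ holomorphic functions $F_A:((s,t)\times V)_I\to\mathbb C_I$ such that
$$g(z)=\sum_{|A|=0}^{n-1}F_A(z)I_A \quad\text{for all } z\in((s,t)\times V)_I.$$
Given $x\in a^{-1}(((s,t)\times V)_I)$, put $z=a(x)$, which lies in $((s,t)\times V)_I$. Then $f(x)=g(a(x))=\sum_A F_A(a(x))I_A$, which is the asserted formula.

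The only delicate point is a bookkeeping one: the slice $((s,t)\times V)\cap\mathbb C_I$ must be the correct domain for the Splitting Lemma. It lives in the $y$-variables, and we pull it back via $a^{-1}$, never intersecting $(s,t)\times U$ with $\mathbb C_I$ directly. Because $a$ preserves $x_0$ and is bijective, this pullback is well defined, and no regularity beyond $f\in C^1$ and $a$ being a $C^1$ diffeomorphism is needed. I expect no genuine obstacle beyond this and the invertibility remark above.
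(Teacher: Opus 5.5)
Your proof is correct and takes the paper's route: set $g=f\circ a^{-1}$, obtain $f\in\textrm{Ker}(\mathcal H_a)$ from the kernel result for $G$ together with \eqref{GyH}, apply Lemma \ref{Split} to $g$ on $((s,t)\times V)_I$, and substitute $y=a(x)$. Your continuity argument for cancelling $\underline a$ on the segment where it vanishes is a useful detail; the paper only asserts that step by noting that $a$ is a diffeomorphism.
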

\begin{proof}
	 Lemma \ref{Split}  and \eqref{GyH} give us that  
	 	 $f  \in \textrm{Ker}(\mathcal H_a)$ and  
	$$f\circ a^{-1}(y) = \sum^{n-1}_{|A|=0}
	F_A(y)I_A, \quad  I_A = I_{i_1}\cdots I_{i_s},$$
for all $y\in  ( (s,t)\times V  )_{I}  $. The formula is obtained doing 
$y=a(x)$.
	 
\end{proof}

\begin{thm} \label{KerHCauchyForm} (Cauchy formula). Let $ (s,t)\times V \subset \mathbb R^{n+1}$ be an open set and   given $f\in C^1((s,t)\times U ,  \mathbb R_{0,n} )$ such that 
		$f\circ  a^{-1} \in \mathcal {HM}( (s,t)\times V)$ or 
	$f\circ  a^{-1} \in \mathcal{GM}_p((s,t)\times V)
	$. Let $\Omega \subset (s,t)\times V$ be a bounded axially symmetric open set. Suppose
	that the boundary of $\Omega\cap \mathbb  C_I$ consists of a finite number of rectifiable Jordan curves
	for any $I\in \mathbb  S$. Then  
	$f  \in \textrm{Ker}(\mathcal H_a)$  and 
	if $y \in (s,t)\times U$, we have
	$$
	f (x) = \frac{1}{2\pi} \int_{\partial (\Omega\cap \mathbb C_I )}
	S^{-1}(s, a(x))ds_I (f\circ  a^{-1})(s), $$
	where $ds_I = \frac{ds}{I}$ and
	$$
	S^{-1}(s, a(x)) = -\left[ (a(x))^2-  2a(x) \textrm{Re}[s] + 
	|s|^2\right]^{-1}(a(x) - \overline{s}),$$
	and the integral does not depend on $\Omega$ nor on the imaginary unit $I \in \mathbb S$.
\end{thm}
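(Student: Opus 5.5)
The plan follows the pattern of Proposition \ref{propSMHMH} and Theorem \ref{KerHRepr}: move everything to the $y=a(x)$ side by composing with $a^{-1}$, apply the known Cauchy formulas there, and pull the result back through $a$. Set $g:=f\circ a^{-1}$, defined on $(s,t)\times V$; since $a$ is a diffeomorphism, $g$ is $C^1$. Note that the statement should read $x\in a^{-1}(\Omega)$ (written ``$y\in(s,t)\times U$'' in the statement); this is what is needed for $y=a(x)$ to lie in $\Omega$, and I will prove the formula for such $x$.

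\textbf{Step 1: $f\in\textrm{Ker}(\mathcal H_a)$.}
\begin{itemize}
\item If $g\in\mathcal{HM}((s,t)\times V)$, then Theorem \ref{teoHGM}(2) gives $g\in\mathcal{GM}((s,t)\times V)$. Here the axial symmetry is that of $(s,t)\times V$, as in the hypotheses of the earlier propositions.
\item If $g\in\mathcal{GM}_p((s,t)\times V)$, then $g\in\mathcal{GM}$ by definition.
\end{itemize}
In both cases $G[g]=0$. By \eqref{GyH},
$$0=G[f\circ a^{-1}]\circ a=-\underline{a}\,\mathcal H_a(f).$$
Wherever $\underline a(x)\neq0$, $\underline a(x)$ is invertible in $\mathbb R_{0,n}$, with inverse $-\underline a/|\underline a|^2$, so $\mathcal H_a(f)(x)=0$ there. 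The set $\{\underline a=0\}$ is the preimage of the real axis under the diffeomorphism $a$, so it has empty interior. By continuity of $\mathcal H_a(f)$, which holds because $f\in C^1$, we get $\mathcal H_a(f)=0$ everywhere. This is the same argument as in Proposition \ref{propSMHMH}.

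\textbf{Step 2: the Cauchy formula for $g$.}
\begin{itemize}
\item In the $\mathcal{HM}$ case, apply Theorem \ref{CauchyFormSM} with $W=(s,t)\times V$ and $U=\Omega$. For every $y\in\Omega$,
$$g(y)=\frac{1}{2\pi}\int_{\partial(\Omega\cap\mathbb C_I)}S^{-1}(s,y)\,ds_I\,g(s),$$
and the integral is independent of $\Omega$ and of $I$.
\item In the $\mathcal{GM}_p$ case, apply Theorem \ref{CauchyFormSM2} instead. That theorem needs $\Omega$ not to meet the real line (note that $(s,t)\times V$ is not itself required to avoid it). I would add this as an implicit hypothesis in this case, as in the source theorem.
\end{itemize}

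\textbf{Step 3: pull back.} Substitute $y=a(x)$ for $x\in a^{-1}(\Omega)$ and use $g(a(x))=f(x)$. The kernel $S^{-1}(s,y)$ becomes exactly the displayed $S^{-1}(s,a(x))$. The integrand $(f\circ a^{-1})(s)$ is unchanged, since the integration variable stays on the $V$ side. Independence of $\Omega$ and $I$ is inherited directly from Step 2.

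\textbf{Main obstacle.} The only delicate points are bookkeeping rather than analysis:
\begin{itemize}
\item Cancelling the factor $\underline a$ in Step 1, handled by the density and continuity argument above.
\item Making sure the hypotheses of Theorem \ref{CauchyFormSM2} (avoiding the real axis) are met in the $\mathcal{GM}_p$ case.
\end{itemize}
Everything else is a direct transfer through the diffeomorphism $a$.
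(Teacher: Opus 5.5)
Your proposal is correct and follows the paper's own proof: pass to $g=f\circ a^{-1}$, combine Theorems \ref{CauchyFormSM} and \ref{CauchyFormSM2} with \eqref{GyH}, and substitute $y=a(x)$. The points the paper leaves implicit, and which you handle correctly, are these: you invoke Theorem \ref{teoHGM} to actually obtain $G[g]=0$; you cancel $\underline a$ outside the set $\{\underline a=0\}$ and conclude by continuity; you read the conclusion for $x\in a^{-1}(\Omega)$; and you require $\Omega\cap\mathbb R=\emptyset$ in the $\mathcal{GM}_p$ case.
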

\begin{proof} Using 
 Theorems \ref{CauchyFormSM} and \ref{CauchyFormSM2} and \eqref{GyH} one has that 
 $f  \in \textrm{Ker}(\mathcal H_a)$ and  
 	$$
	f \circ a^{-1}(y) = \frac{1}{2\pi} \int_{\partial (\Omega\cap \mathbb C_I )}
	S^{-1}(s, y)ds_I (f\circ  a^{-1})(s). $$
Use $y=a(x)$. 
\end{proof}

\begin{rem}
Denoting   $u_i =   \left( \sum_{j=1}^n   a _j   \frac{\partial (a^{-1})_i  }{\partial  y_j}\circ    a     \right) $ for $i=1,\dots, n$ we see that the multidimensional  material derivative  satisfies 
	$$ { {D}_u  }f =   (1  +{\underline a} ) 
	{\frac {\partial f }{\partial x_0}}
	- \mathcal H_a[f], \quad  \forall f\in C^1((s,t)\times U ,  \mathbb R_{0,n} ).$$
	Then,  $f\in\textrm{Ker}(\mathcal H_a)$ iff
	\begin{align}\label{DkerHa}
		{  D}_u (f) =    (1  +{\underline a} ) 
		{\dfrac {\partial f}{\partial x_0}},  \quad \textrm{on} \quad (s,t)\times U.
	\end{align}
	Therefore,      Propositions \ref{propSMHMH}, \ref{kerHPower},  Theorems  
	\ref{KerHRepr},  \ref{KerHCauchyForm} 
	and Lemma \ref{kerHSplit}  are 
	  are valid for the set of functions induced by $D_u$ under condition \eqref{DkerHa}.
	  
Also, note that    $f\in\textrm{Ker}(D_u)$ iff
	\begin{align}\label{HakerDu}
  \mathcal H_a [f ] = (1  +{\underline a} ) 
	{\frac {\partial f }{\partial x_0}}
	 , \quad \textrm{on} \quad (s,t)\times U,
	\end{align}
which  shows us that  
	  the function  set  $\textrm{Ker}(D_u)$  is directly associated to is associated with a problem of generalized functions of $\mathcal H_a $ in a Vekua type sense.

	 \end{rem} 

\begin{rem}
	Using the theory  of the  operator
	$$
	G_R[f](x) = |\underline{x}|^2 \frac{\partial f}{\partial x_0}(x)
	+ \sum_{j=1}^n x_j \frac{\partial f}{\partial x_j}(x)  \underline{x}  ,$$
	we can obtain some properties of the operator
	\begin{align*} \mathcal H_{a,R} (f) (x) :=  \frac{\partial f}{\partial x_0} ( x)\  {\underline a}   (\underline{x})   - \sum_{i=1}^n     \left( \sum_{j=1}^n   a _j  (\underline{x}) \frac{\partial (a^{-1})_i  }{\partial  y_j}\circ    a  (\underline{x})  \right)  \frac{\partial f}{\partial  x_i} (x),  
	\end{align*} for all $x\in (s,t)\times U$,
	and  if  $u_i =   \left( \sum_{j=1}^n   a _j  (\underline{x}) \frac{\partial (a^{-1})_i  }{\partial  y_j}\circ    a  (\underline{x})  \right) $ for $i=1,\dots, n$,  then 
	$$ {  {D}_u  (f)} =   
	{\frac {\partial f }{\partial y_0}}  \ (1  +{\underline a} )
	- \mathcal H_{a,R} [f].$$
The properties obtained on $ {D}_u$ will be very similar to those obtained in this subsection using $G$ and $\mathcal H_a$.\end{rem}

\subsection{Quaternionic framework}\label{QFR}

\begin{defn}	
	Let  $U,V\subset \mathbb R^{3}$ be  two domains and $(s,t) \subset \mathbb R$ such that   there exists   
	$b \in C^1((s,t)\times V, \mathbb H)$ and a  diffeomorphism  $\underline{a} 
	= \sum_{i=1}^3 a_ i   e_i   \in C^1(   U,  V)$, where $a_i$ is a real valued function for all $i$. Define  $a:  (s,t)\times U \to (s,t)\times V$ as follows  
		$$a(x) =x_0 +  \sum_{i=1}^3 a_ i (\underline{x}) e_i, \quad \forall x\in (s,t)\times U.$$ 
If we denote 	$y= a(x)$ then   $x_0 =(a(x))_0 = y_0$ and $\underline{y}= \underline{  a}( \underline x)$.  Also 	$x= a^{-1}(y) =y_0 +  \sum_{j=1}^3 (a^{-1}) _ j(y) e_j$, where 
$(a^{-1}) _ j$ is  the j th real component function of $a^{-1}$. 

Define  
	\begin{align*}
		& H_{a,b}[f] :=\\
		&  (  G [b  ]\circ a ) f +     |\underline{a }|^2 \ ( b\circ a )  \frac{\partial  f }{\partial x_0}    + \underline{a } \   ( b \circ a ) \sum_{j=1}^3  \left(  \sum_{i=1}^3 a _i  \  \frac{\partial   ( a^{-1})_j  }{\partial  y_i} \circ a \right) \frac{\partial  f  }{\partial  x_j} , \\   
		& H_{a,b,r}[f] := \\
		& f  G_r [b  ]\circ a  +      \frac{\partial  f }{\partial x_0} |\underline{a }|^2  \  ( b\circ a )    + \sum_{j=1}^3  \left(  \sum_{i=1}^3 a _i   \ \frac{\partial   ( a^{-1})_j  }{\partial  y_i} \circ a \right) \frac{\partial  f  }{\partial  x_j}  
		( b \circ a )    \underline{a } , \\
		&\mathcal H_a (f) : =  {\underline a}     \frac{\partial f}{\partial x_0}   
		- \sum_{i=1}^3     \left( \sum_{j=1}^3   a _j   \ \frac{\partial (a^{-1})_i  }{\partial  y_j}\circ      a    \right)  \frac{\partial f}{\partial  x_i}  ,\\
		&\mathcal  H_{a,r} (f) :=  \frac{\partial f}{\partial x_0}   {\underline a}   
		- \sum_{i=1}^3     \left( \sum_{j=1}^3   a _j   \ \frac{\partial (a^{-1})_i  }{\partial  y_j}\circ     a    \right)  \frac{\partial f}{\partial  x_i}  ,
	\end{align*}
	for all $f\in C^{1} ((s,t)\times U,\mathbb H)$. 
\end{defn}

From now on we shall assume the hypotheses and notations of functions $a,b$ and their domains    introduced in the previous definition throughout this subsection. Also given the functions $f$,  $\alpha$ and $\beta$   denote  $W_\alpha[ f]= f\circ
		\alpha$ and $  {{}^\beta M}[ f ]= \beta f $ if the composition and product are valid, respectively.

\begin{prop}\label{HGQuaternions}  Given 
	$f\in C^{1}((s,t)\times U  , \mathbb H)$ 
	we have that    
	\begin{align*}
		H_{a,b}[f]=& W_{a } \circ  G \circ {}^{b}M \circ W_{a^{-1}}  [f]  ,\\
		   {}^{-\underline{a}}M\circ \mathcal H_a (f) = & W_{a } \circ  G   \circ W_{a^{-1}}  [f], \\
		{}^{-\underline{a}}M\circ \mathcal H_{a,r} (f) = &  W_{a } \circ  G_r   \circ W_{a^{-1}}  [f], \quad \textrm{ on} \ \   (s,t)\times U.
	\end{align*}
	
\end{prop}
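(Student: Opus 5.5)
The plan is to verify the three identities by the chain rule, exactly as in the proof of Proposition \ref{HyG}, now with $\mathbb H$-valued functions. The only facts used are that $a$ changes only the vector variables, i.e. $y_0=x_0$ and $\underline y=\underline a(\underline x)$, and that $|\underline y|^2=-\underline y\,\underline y$.

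\textbf{Step 1 (chain rule).} For $g=f\circ a^{-1}$, differentiating $g(y)=f(a^{-1}(y))$ gives
$$\frac{\partial g}{\partial y_0}=\frac{\partial f}{\partial x_0}\circ a^{-1},\qquad \frac{\partial g}{\partial y_i}=\sum_{j=1}^3\frac{\partial (a^{-1})_j}{\partial y_i}\,\frac{\partial f}{\partial x_j}\circ a^{-1}.$$
Composing with $a$ and writing $u_j=\sum_{i=1}^3 a_i\,\frac{\partial (a^{-1})_j}{\partial y_i}\circ a$, we obtain
$$\Big(\sum_i y_i\frac{\partial g}{\partial y_i}\Big)\circ a=\sum_j u_j\frac{\partial f}{\partial x_j}.$$
The $u_j$ are real-valued, so they commute with every quaternion. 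This is the step where some care is needed.

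\textbf{Step 2 (first identity).} Expand $G[bg]$ by the Leibniz rule. Since $G$ applies the left factor $\underline y$ to derivatives, we get $G[bg]=G[b]\,g+|\underline y|^2 b\,\partial_{y_0}g+\underline y\, b\sum_i y_i\partial_{y_i}g$. Composing with $a$ and using Step 1 reproduces $H_{a,b}[f]$ term by term.

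\textbf{Step 3 (second identity).} Put $b=1$ in Step 2. This gives $W_a\circ G\circ W_{a^{-1}}[f]=|\underline a|^2\partial_{x_0}f+\underline a\sum_j u_j\partial_{x_j}f$. On the other side,
$$-\underline a\,\mathcal H_a(f)=-\underline a\,\underline a\,\partial_{x_0}f+\underline a\sum_i u_i\partial_{x_i}f .$$
Since $-\underline a\,\underline a=|\underline a|^2$ is real, the two expressions coincide.

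\textbf{Step 4 (third identity).} Here $G_r$ puts $\underline y$ on the right: $W_a\circ G_r\circ W_{a^{-1}}[f]=|\underline a|^2\partial_{x_0}f+\sum_j u_j\partial_{x_j}f\,\underline a$. We then compare this with the multiplication by $-\underline a$ applied to $\mathcal H_{a,r}(f)=\partial_{x_0}f\,\underline a-\sum_i u_i\partial_{x_i}f$.

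The $x_0$-term causes no trouble once $\underline a$ sits next to the factor $\underline a$ already present in $\mathcal H_{a,r}$. In that position it produces $-\underline a\,\underline a=|\underline a|^2$, which is real and hence central. Likewise, in the spatial terms the real coefficients $u_i$ commute freely, so $\underline a$ ends up on the side dictated by $G_r$.

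The main obstacle is this noncommutativity. The literal left product $-\underline a\,\partial_{x_0}f\,\underline a$ equals $|\underline a|^2\partial_{x_0}f$ only when $\partial_{x_0}f$ commutes with $\underline a$. For example, this holds for functions that are $\mathbb C_{I}$-valued on each slice, where $I$ is the unit direction of $\underline a$.

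I would therefore carry out the comparison coefficientwise: first the real scalars $|\underline a|^2$ and $u_i$, then the placement of $\underline a$. I would read the multiplier ${}^{-\underline a}M$ in the third line in the sense consistent with the right-linear structure of $G_r$, that is, acting on the same side as $\underline a$ appears in $G_r$. In the commuting case this makes it agree with the literal left product.
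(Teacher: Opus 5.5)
Your Steps 1--3 are the paper's argument: the paper proves this proposition only by pointing back to the chain-rule computation of Proposition \ref{HyG}, and your computation carries it out correctly for the first two identities.

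For the third identity your suspicion is justified, and you should state it as a correction rather than as a ``reading'' of the multiplier. With ${}^{-\underline a}M$ taken literally as left multiplication, the identity is false. For a counterexample, take $a$ the identity map and $f(x)=x_0e_1$. At a point with $\underline x=e_2$ one gets $-\underline x\,\mathcal H_{a,r}(f)=-e_2e_1e_2=-e_1$, whereas $G_r[f]=|\underline x|^2e_1=e_1$. The spatial terms also fail to match in general ($\underline a\sum_i u_i\partial_{x_i}f$ versus $\sum_i u_i\partial_{x_i}f\,\underline a$), not only the $x_0$-term you single out.

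What your computation actually proves is $M^{-\underline a}\circ\mathcal H_{a,r}(f)=-\mathcal H_{a,r}(f)\,\underline a=W_a\circ G_r\circ W_{a^{-1}}[f]$. This is the form the paper itself uses later: the term $\mathcal H_{a,r}(g)\,\underline a$ appears in its Borel--Pompeiu and Stokes formulas.
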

\begin{proof}
	Follows from similar computations to those presented in Proposition \ref{HyG} and Corollary \ref{corKerHKerG}.
\end{proof}

\begin{rem} Given 	$f\in C^{1}((s,t)\times U  , \mathbb H)$. 
Note that $f\in \textrm{Ker}(\mathcal H_{a}) $ iff \ $    W_{a^{-1}}  [f]\in\textrm{Ker}(G)$.
	In addition,  $f\in \textrm{Ker}(\mathcal H_{a,r}) $ iff 
		 $  W_{a^{-1}}  [f]\in\textrm{Ker}(G_r)$.
\end{rem}

\begin{ex}
The quaternionic right version of operators given in Examples \ref{examples} are the following:
	\begin{align*}
	\mathcal 	H_{a,r} (f) (x) =& \frac{\partial f}{\partial x_0} ( x)  \sum_{j=1}^3(r_j{x_j}+ s_j) e_j  
		-   \sum_{i=1}^3     ( x_i + \frac{s_i}{r_i})    \frac{\partial f}{\partial  x_i} (x), \\
	\mathcal 	H_{a,r} (f) (x) =&  \frac{\partial f}{\partial x_0} ( x) \left(   \sum_{j=1}^3{x_j}^{2k} e_j \right)   
		- \frac{1}{2k}  \sum_{i=1}^3     x_i    \frac{\partial f}{\partial  x_i} (x), \\
	\mathcal 	H_{a,r} (f) (x) = &   \frac{\partial f}{\partial x_0} ( x) \sum_{j=1}^3 e^{x_j} e_j   
		-  \sum_{i=1}^3         \frac{\partial f}{\partial  x_i} (x) ,\\
	\mathcal 	H_{a,r} (f) (x) = &   \frac{\partial f}{\partial x_0} ( x) \sum_{j=1}^3 \sin({x_j}) e_j  
		-  \sum_{i=1}^3   \tan(x_i)      \frac{\partial f}{\partial  x_i} (x),\\
	\mathcal 	H_{a,r} (f) (x) = &   \frac{\partial f}{\partial x_0} ( x)  \sum_{j=1}^3 \ln({x_j}) e_j 
		-  \sum_{i=1}^3   x_i \ln(x_i)    \frac{\partial f}{\partial  x_i} (x),
	\end{align*}
respectively, where $f$ satisfies the respective necessary conditions for each operator. 
\end{ex}

\begin{ex} An interesting example in the    quaternionic case occurs when   $a$ is the quaterninic rotation preserving $\mathbb R^3$, i.e.,  there exists
	$c\in \mathbb S^2$ such that   $a(x)=c x\bar c$ for all $x\in \mathbb H $. 
	Then  direct computations allows to see that
	\begin{align*}   \mathcal  H_a (f) (x) = &  c\underline{x}\bar c  \frac{\partial f}{\partial x_0} ( x)  
		- \sum_{i=1}^3     \left( \sum_{j=1}^3 (c\underline{x}\bar c e_j + e_j c\underline{x}\bar c )   (\bar c e_j  c e_i + e_i  \bar ce_j c )   \right)   \frac{\partial f}{\partial  x_i} (x) ,\\ 
    \mathcal   H_{a,r} (f) (x) =  
		&  \frac{\partial f}{\partial x_0} ( x)   c\underline{x}\bar c 
		- \sum_{i=1}^3     \left( \sum_{j=1}^3 (c\underline{x}\bar c e_j + e_j c\underline{x}\bar c )   (\bar c e_j  c e_i + e_i  \bar ce_j c )   \right)   \frac{\partial f}{\partial  x_i} (x) , \quad \forall x\in \mathbb H, 
	\end{align*}
	for all $f\in C^1(\mathbb H , \mathbb H)$.
\end{ex}

\begin{rem}\label{remDHQuater}
	Suppose that  $u_i =   \sum_{j=1}^3   a _j   \frac{\partial (a^{-1})_i  }{\partial  y_j}\circ    a      $ for $i=1,2, 3$. The 4-dimensional incompressible Navier-Stokes equation and the   material derivative  
are rewritten as
	\begin{align*}
		(1  +{\underline a} )   \frac{\partial u}{\partial x_0} &   
		- \mathcal H_a(u)  - \alpha \sum_{j=1}^3  \frac{\partial^2 u}{\partial x_j^2 }  =-  \nabla p+ f(x, t),\\
	  {D}_u f = &   (1  +{\underline a} ) 
		{\frac {\partial f}{\partial y_0}}
		- \mathcal  H_a(f)  , 
	\end{align*} 
	respectively, 	and  Euler equations of gas-dynamics,
	\begin{align*}
		\frac{\partial \rho }{\partial t}   + \nabla\cdot  (\rho u) =& 0 ,\\
		\rho\left(  \frac{\partial u }{\partial t}   + (u \cdot \nabla)   u \right)+ \nabla p  =& \rho a ,\\
		\frac{\partial p  }{\partial t}   + (u \cdot \nabla)   p + \gamma_p ( \nabla \cdot u ) =& 0 ,\\
	\end{align*}
	becomes at 
	\begin{align*}
		\frac{\partial \rho }{\partial y_0}   + \nabla\cdot  (\rho u) =& 0 ,\\
		\rho\left(  (1+{\underline a} ) 
		{\frac {\partial u}{\partial y_0}}
		- \mathcal H_a(u)  \right)+ \nabla p  =& \rho a ,\\
		(1+  {\underline a} ) 
		{\frac {\partial p}{\partial y_0}}
		-\mathcal H_a(p)  + \gamma_p ( \nabla \cdot u ) =& 0.\\
	\end{align*}
	In a similar way the operator $\mathcal H_{a,r}$ can be used to express the 4-dimensional incompressible Navier-Stokes equation,  the   material derivative  	and  Euler equations of gas-dynamics. 
\end{rem}

\begin{prop}\label{SerKerHa} (Power series development).  
	Suppose     $\mathbb B^{4}(0,1)\subset (s,t)\times V $. If $f\in \textrm{Ker}( \mathcal H_a)\cap C^1((s,t)\times U,\mathbb H)$  then there exists a sequence of quaternions $(a_n)_{n\geq 0}$ such that    
	\begin{align*}  f (x)=   \sum_{n=0}^\infty (a(x))^n a_n,\quad \forall x\in a^{-1}(\mathbb B^{4}(0,1)).  
	\end{align*}
	Similarly, if $f\in \textrm{Ker}( \mathcal H_{a,r})\cap C^1((s,t)\times U,\mathbb H) $  then    
	\begin{align*}  f (x)=   \sum_{n=0}^\infty  a_n (a(x))^n ,\quad \forall x\in a^{-1}(\mathbb B^{4}(0,1)).  
	\end{align*}
\end{prop}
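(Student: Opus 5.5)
The plan is to transport the problem to the $y=a(x)$ variables via Proposition \ref{HGQuaternions}, exactly as in the proof of Proposition \ref{kerHPower}. Let $f\in \textrm{Ker}(\mathcal H_a)\cap C^1((s,t)\times U,\mathbb H)$ and set $g=W_{a^{-1}}[f]=f\circ a^{-1}$, which is $C^1$ on $(s,t)\times V$ because $a$ is a diffeomorphism. The second identity of Proposition \ref{HGQuaternions} gives
\begin{align*}
G[g]\circ a = -\underline{a}\,\mathcal H_a(f) = 0 \quad \textrm{on } (s,t)\times U,
\end{align*}
so $g\in \mathcal{GR}((s,t)\times V)$. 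In particular $g\mid_{\mathbb B^4(0,1)}\in \textrm{Ker}(G)\cap C^1(\mathbb B^4(0,1),\mathbb H)$.

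The crucial step is to pass from $g\in\textrm{Ker}(G)$ on the ball to slice regularity of $g$, so that the quaternionic analogue of Proposition \ref{Power} applies. That analogue says a slice regular function on $\mathbb B^4(0,1)$ equals $\sum y^n a_n$ with $a_n=\frac{1}{n!}\partial_u^n g(0)$. Theorem \ref{HGquater} only gives the inclusion $\mathcal{SR}\subset\mathcal{GR}$, so I would prove the converse on each slice directly. Write $y=u+Iv$ with $v>0$. On $\mathbb C_I$ one has $\underline{y}=Iv$ and $\sum_j y_j\partial_{y_j}=v\partial_v$ along the slice. Hence
\begin{align*}
G[g](u+Iv)=v^2\partial_u g + Iv\cdot v\partial_v g = v^2\left(\partial_u + I\partial_v\right) g\mid_{\mathbb C_I}.
\end{align*}
Thus $G[g]=0$ forces $(\partial_u+I\partial_v)g\mid_{\mathbb C_I}=0$ off the real axis. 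By continuity of the $C^1$ derivatives, the same holds at real points.

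At that stage $g$ is slice regular on the axially symmetric s-domain $\mathbb B^4(0,1)$. The quaternionic power series theorem from the slice regular literature cited in Subsection \ref{QSRFT} then gives $g(y)=\sum_{n\ge 0}y^n a_n$ on $\mathbb B^4(0,1)$. Substituting $y=a(x)$ for $x\in a^{-1}(\mathbb B^4(0,1))$ yields the claim.

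For the right version I would run the same argument with the third identity of Proposition \ref{HGQuaternions}, which gives $g\in\textrm{Ker}(G_r)$. On a slice, $G_r[g]=v^2(\partial_u g+\partial_v g\, I)$, so $g$ is right slice regular. The right power series $g(y)=\sum a_n y^n$ follows, and composition with $a$ finishes the proof.

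I expect the main obstacle to be justifying the slicewise identity at points of the real axis. There $\underline y=0$, so $G$ degenerates and the derivatives along the slice must be handled with care. I would first try the continuity argument above. If that is delicate because $\partial_v$ on $\mathbb C_I$ at $v=0$ depends on $I$, I would instead note that $g$ restricted to $\mathbb C_I\setminus\mathbb R$ is holomorphic and continuous up to $\mathbb R$. Morera's theorem or Painlevé's removability theorem then shows the restriction is holomorphic on all of $\mathbb C_I\cap\mathbb B^4(0,1)$.
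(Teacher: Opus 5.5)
Your proposal is correct and follows the paper's route. You pull back to $g=f\circ a^{-1}$ via Proposition \ref{HGQuaternions}, invoke the power series expansion of slice regular functions on $\mathbb B^4(0,1)$, substitute $y=a(x)$, and use $G_r$ in the same way for the right case. What you add is the step the paper leaves implicit: Theorem \ref{HGquater} only gives $\mathcal{SR}\subset\mathcal{GR}$, and your slicewise identity $G[g](u+Iv)=v^2(\partial_u+I\partial_v)g\mid_{\mathbb C_I}$ (valid for every real $v\neq 0$, not only $v>0$), together with continuity of the $C^1$ derivatives at $v=0$, shows that $\textrm{Ker}(G)\cap C^1$ on the ball is slice regular, so the Morera/Painlev\'e fallback is unnecessary.
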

\begin{proof}
We have $f\circ a^{-1} \in \textrm{Ker}(G)	\cap C^1(\mathbb B^{4}(0,1),\mathbb H)$. From the power series expansion theorem for slice regular  functions, see \cite{CSS},  there exists a sequence of quaternions $(a_n)_{n\geq 0}$   such that 
	\begin{align*}  f\circ a^{-1} (y) = \sum_{n=0}^\infty y^n a_n,\quad \forall y\in\mathbb B^{4}(0,1).  
	\end{align*}
	Notation $y=a(x)$ gives us the first power series development. 
		
	In addition,  if $f\in \textrm{Ker}(\mathcal  H_{a,r})\cap C^1((s,t)\times U,\mathbb H) $, then the power series expansion theorem for  $f\circ a^{-1}$ gives us the second result.   
\end{proof}

\begin{thm}\label{thmKerHASSD} 
Let  $(s,t)\times V \subset \mathbb H$ be  an axially symmetric s-domain and 
$f\in   C^1((s,t)\times U,\mathbb H) $.
\begin{enumerate}
			\item If  $f\circ a^{-1} \in \mathcal {SR}((s,t)\times V)$  then $f \in \textrm{Ker} (\mathcal H_a) $.
			\item If $f\circ a^{-1} \in  \mathcal {HR}( (s,t)\times V)$  then $f \in \textrm{Ker} (\mathcal H_a)  $.
\end{enumerate}  
\end{thm}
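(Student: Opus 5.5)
The plan mirrors the proof of Proposition \ref{propSMHMH}, now in the quaternionic setting. Take $f\in C^1((s,t)\times U,\mathbb H)$ and put $g:=f\circ a^{-1}=W_{a^{-1}}[f]$. Since $a$ is a $C^1$ diffeomorphism from $(s,t)\times U$ onto $(s,t)\times V$, we have $g\in C^1((s,t)\times V,\mathbb H)$. In both items the hypothesis is that $g\in\mathcal{SR}((s,t)\times V)$ or $g\in\mathcal{HR}((s,t)\times V)$, where $(s,t)\times V$ is an axially symmetric s-domain. Theorem \ref{HGquater} (items 1 and 2) then gives $g\in\mathcal{GR}((s,t)\times V)$, that is, $G[g]=0$ on $(s,t)\times V$.

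Next, use the second identity of Proposition \ref{HGQuaternions}:
\begin{align*}
-\underline{a}\,\mathcal H_a(f) = W_a\circ G\circ W_{a^{-1}}[f] = G[f\circ a^{-1}]\circ a = G[g]\circ a = 0 \quad \textrm{on } (s,t)\times U.
\end{align*}
The identity itself is the chain-rule computation of Proposition \ref{HyG} with $b=1$. Writing $y=a(x)$, we have $\partial/\partial y_0=\partial/\partial x_0$, and $\partial/\partial y_i=\sum_j \frac{\partial(a^{-1})_j}{\partial y_i}\circ a\,\partial/\partial x_j$. This turns $|\underline y|^2\partial_{y_0}+\underline y\sum_i y_i\partial_{y_i}$ into $-\underline a$ times the expression defining $\mathcal H_a$, using $\underline{a}^2=-|\underline a|^2$ in $\mathbb H$.

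It remains to cancel the factor $-\underline a$, and this is the step I expect to be the main obstacle. In $\mathbb H$ every nonzero quaternion is invertible, so wherever $\underline a(\underline x)\neq 0$ we can multiply on the left by $(-\underline a)^{-1}=\underline a/|\underline a|^2$ and get $\mathcal H_a(f)(x)=0$.

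The set $Z=\{x:\underline a(\underline x)=0\}$ is the preimage under the diffeomorphism $a$ of $(s,t)\times V$ intersected with the real axis. This is the single line $\{x_0+\underline{a}^{-1}(0)\}$ (or empty), so it has empty interior. Since $f\in C^1$ and $a\in C^1$, the coefficients of $\mathcal H_a$ are continuous (assuming, as implicitly in the paper, that $a^{-1}$ is $C^1$). Hence $\mathcal H_a(f)$ is continuous on $(s,t)\times U$. It vanishes on the dense open set $((s,t)\times U)\setminus Z$, so it vanishes everywhere.

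Therefore $f\in\textrm{Ker}(\mathcal H_a)$ in both cases.
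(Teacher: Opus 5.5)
Your proof is correct and is essentially the paper's proof, which combines Theorem~\ref{HGquater} (giving $G[f\circ a^{-1}]=0$) with the identity $-\underline{a}\,\mathcal H_a(f)=G[f\circ a^{-1}]\circ a$ from Proposition~\ref{HGQuaternions}. Your treatment of the zero set of $\underline a$ fills in a cancellation step the paper leaves implicit, and it can be done even more directly: where $\underline a(\underline x)=0$, every coefficient of $\mathcal H_a$ vanishes, so $\mathcal H_a(f)(x)=0$ there without any continuity or density argument.
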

\begin{proof}Use Propositions \ref{HGquater} and \ref{HGQuaternions}. 
\end{proof}

The previous proposition has a quaternionic right version applying $G_r$ and $H_{a,r}$.

\begin{thm}\label{quaternionic_Borel-Pompeiu_form_La} (Borel-Pompeiu-type formula associated to $H_a$ and $H_{a,r}$).
	Let $\Omega\subset (s,t) \times U$ be a domain such that 
	such that $\overline{a(\Omega)}\subset (s,t)\times V  $ and  $\partial ( \Omega )$  
	and $\partial ( a(\Omega))$ are 3-dimensional compact smooth surfaces. Also suppose  $\overline{a(\Omega)}\subset \mathbb H\setminus \mathbb R$. 
	Then
	\begin{align*}   & \displaystyle \int_{\partial  \Omega }  \|\underline a(\underline \tau)  \|^2  \big[  {g (\tau)} J_{a}\nu_{a(\tau)}   A ( a(x), a(\tau))  - A (a(\tau), a(x)) J_a \nu_{a(\tau)}   f (\tau) \big]  \nonumber \\
		& +  		\int_{ \Omega } \big[ B (a(y), a(x))f (y)  -     
		g (y) C (a(x),a(y))	\big]
		M_a	dy  \nonumber   \\ 
		&  +  2	\int_{ \Omega }  \big[ 	   A (a(y), a(x)) 
		\vec{a}(\underline y) \mathcal H_a [f] (y)    + 
		\mathcal H_{a,r} (g)  \underline{a}(\vec  y)  A (a(x),a(y))  \big]
		M_a dy \nonumber \\
		= &       \left\{ \begin{array}{ll} f (x) + g (x), &  x\in  ( \Omega) , \\ 0 ,&  x\in \mathbb H\setminus\overline{  ( \Omega) },  \end{array}       \right.    
	\end{align*}
	for all  $f ,g  \in C^{1}(\overline{ \Omega },\mathbb H)$, where $J_a$ and $M_a$ are the differential increases in the  differential  volume caused by the variable change $x=a(y)$ in the respective integrals. For example, in $\Omega$ we see that  $M_a$ is given en terms of the determinant of the Jacobian matrix of $a$.	
\end{thm}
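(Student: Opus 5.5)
The plan is to obtain the formula by pulling the global quaternionic Borel--Pompeiu formula (Theorem \ref{quaternionic_Borel-Pompeiu_form}) back along the diffeomorphism $a$. Set $\tilde\Omega := a(\Omega)$. By hypothesis $\tilde\Omega$ is a domain with $\partial\tilde\Omega$ a compact smooth $3$-surface and $\overline{\tilde\Omega}\subset \mathbb H\setminus\mathbb R$, so Theorem \ref{quaternionic_Borel-Pompeiu_form} applies on $\tilde\Omega$. We apply it to the functions $\tilde f := W_{a^{-1}}[f] = f\circ a^{-1}$ and $\tilde g := g\circ a^{-1}$, which belong to $C^1(\overline{\tilde\Omega},\mathbb H)$. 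Evaluating at the point $a(x)$, the right-hand side becomes $\tilde f(a(x))+\tilde g(a(x)) = f(x)+g(x)$ when $a(x)\in\tilde\Omega$, that is, when $x\in\Omega$. It vanishes when $a(x)\notin\overline{\tilde\Omega}$, which (as $a$ is a bijection) corresponds to $x\notin\overline\Omega$ in the domain of $a$.

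Next, we transform each integral by the change of variables $\tau' = a(\tau)$ on $\partial\tilde\Omega$ and $y'=a(y)$ in $\tilde\Omega$.

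\textbf{Volume integrals.} Here $dy' = M_a\,dy$, where $M_a=|\det Da|$. Then $B(y',a(x))\tilde f(y')$ becomes $B(a(y),a(x))f(y)$, and similarly for the $C$ term. This produces the second line of the statement.

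\textbf{The term containing $G$ and $G_r$.} We invoke Proposition \ref{HGQuaternions}. It gives $G[\tilde f]\circ a = -\underline a\,\mathcal H_a[f]$. For the right version, the analogous computation gives $G_r[\tilde g]\circ a = -\mathcal H_{a,r}[g]\,\underline a$; here I will recheck on which side $\underline a$ sits, since for the right operator the factor $\underline y$ multiplies on the right. Substituting into $2\int[A(y',a(x))G[\tilde f](y') - G_r[\tilde g](y')A(a(x),y')]dy'$ yields the third line, and the signs must be tracked in this step. The statement as written has $+$ in both places and $\vec a(\underline y)$, so I expect to reconcile these sign and notation conventions with the definition of $\mathcal H_a$, where the minus sign is already absorbed.

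\textbf{Boundary integral.} This is the main obstacle. On $\partial\tilde\Omega$ the form is $\|\underline{\tau'}\|^2[\tilde g\,\nu_{\tau'}A(a(x),\tau') - A(\tau',a(x))\nu_{\tau'}\tilde f]$, where $\nu_{\tau'}$ is the quaternion-valued $3$-form built from $d\hat{\tau}'_k$. Pulling back by $a$ replaces each $d\hat y_k$ with a combination of the $d\hat x_l$ weighted by cofactors of $Da$ (since $a$ fixes $x_0$, only the spatial $3\times 3$ block matters). This pulled-back form is by definition what the statement calls $J_a\nu_{a(\tau)}$, and $\|\underline{\tau'}\|^2=\|\underline a(\underline\tau)\|^2$. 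So the first line follows once one accepts $J_a$ as the notation for the pullback factor. I would make this precise by writing $a^*(d\hat y_k)=\sum_l \operatorname{cof}(Da)_{kl}\,d\hat x_l$ and absorbing it into $J_a$, noting that the statement explicitly defines $J_a,M_a$ as these Jacobian factors.

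Combining the three transformed pieces with the evaluation of the right-hand side gives the claimed identity.
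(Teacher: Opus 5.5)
Your route is the paper's: apply Theorem \ref{quaternionic_Borel-Pompeiu_form} on $a(\Omega)$ to $f\circ a^{-1}$ and $g\circ a^{-1}$, evaluate at $a(x)$, and change variables. The pulled-back boundary form is simply named $J_a\nu_{a(\tau)}$, as the paper does. The $G$ and $G_r$ terms are then converted via Proposition \ref{HGQuaternions}.

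The problem is the step you left open: the signs will not reconcile with the statement. Since $a$ fixes $x_0$ and $\underline a$ depends only on $\underline x$, the chain rule gives the identities below. They use $\|\underline a\|^2=-\underline a^{2}$ and $u_i=\sum_j a_j\frac{\partial (a^{-1})_i}{\partial y_j}\circ a$.
\begin{align*}
G[f\circ a^{-1}](a(x)) &= \|\underline a\|^2\frac{\partial f}{\partial x_0}(x)+\underline a\sum_{i=1}^3 u_i\frac{\partial f}{\partial x_i}(x) = -\underline a\,\mathcal H_a[f](x),\\
G_r[g\circ a^{-1}](a(x)) &= \frac{\partial g}{\partial x_0}(x)\,\|\underline a\|^2+\sum_{i=1}^3 u_i\frac{\partial g}{\partial x_i}(x)\,\underline a = -\mathcal H_{a,r}[g](x)\,\underline a .
\end{align*}
Your worry about the side is justified. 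For $G_r$ the factor $\underline a$ sits on the right. Proposition \ref{HGQuaternions} as printed uses ${}^{-\underline a}M$ (left multiplication) for $\mathcal H_{a,r}$, which would produce $\underline a\,\mathcal H_{a,r}(g)$ instead. With right multiplication, and the minus already present in $-G_r[g]A$, the $g$-term becomes $+\mathcal H_{a,r}(g)\,\underline a\,A(a(x),a(y))$, as stated.

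For the $f$-term, nothing absorbs the minus. The original formula has $+A\,G[f]$, so after substitution it becomes $-A(a(y),a(x))\,\underline a(\underline y)\,\mathcal H_a[f](y)$. The paper's proof performs the same substitution without tracking this sign. So what your argument (and the paper's) actually proves has
$$-2\int_\Omega A(a(y),a(x))\,\underline a(\underline y)\,\mathcal H_a[f](y)\,M_a\,dy$$
in the third line.

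The printed $+$ is not a harmless convention. The two versions differ by $4\int_\Omega A(a(y),a(x))\underline a(\underline y)\mathcal H_a[f](y)M_a\,dy$, which does not vanish for general $f\in C^1(\overline\Omega,\mathbb H)$. So do not try to reconcile with the printed sign; carry the computation through and state the corrected identity.
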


\begin{proof} Apply   Theorem \ref{quaternionic_Borel-Pompeiu_form} to functions  	   $f\circ a^{-1},g\circ a^{-1} \in C^{1}(\overline{a(\Omega)},\mathbb H)$    in  $a(\Omega)$ to see  that 
	\begin{align*}   & \displaystyle \int_{\partial a(\Omega)}  \|\underline{\tau} \|^2  \big[  {g\circ a^{-1}(\tau)} \nu_\tau  A ( x, \tau)  - A (\tau, x)  \nu_\tau   f\circ a^{-1}(\tau) \big]  \nonumber \\
		& +  		\int_{a(\Omega)} \big[ B (y, x)f\circ a^{-1}(y)  -     
		g\circ a^{-1}(y) C(x,y)	\big]
		dy \nonumber   \\ 
		&  +  2	\int_{a(\Omega)}  \big[ 	   A (y, x) G[f\circ a^{-1}](y)    -     G_r[g\circ a^{-1}](y) A (x,y)  \big]
		dy \nonumber \\
		= &       \left\{ \begin{array}{ll} f\circ a^{-1}(x) + g\circ a^{-1}(x), &  x\in a(\Omega), \\ 0 ,&  x\in \mathbb H\setminus\overline{a(\Omega)} .  \end{array}       \right.    
	\end{align*}
	Then change of  variable $y=a(q)$ gives us that 
	\begin{align*}   & \displaystyle \int_{\partial  \Omega }  \|\underline a(\underline \tau)  \|^2  \big[  {g (\tau)} J_{a}\nu_{a(\tau)}   A ( x, a(\tau))  - A (a(\tau), x) J_a \nu_{a(\tau)}  f (\tau) \big]  \nonumber \\
		& +  		\int_{ \Omega } \big[ B (a(y), x)f (y)  -     
		g (y) C (x,a(y))	\big]
		M_a	dy  \nonumber   \\ 
		&  +  2	\int_{ \Omega }  \big[ 	   A (a(y), x)  G[f\circ a^{-1}] \circ a (y)    -   G_r[g\circ a^{-1}]\circ a (y) A (x,a(y))  \big]
		M_a dy \nonumber \\
		= &       \left\{ \begin{array}{ll} f\circ a^{-1}(x) + g\circ a^{-1}(x), &  x\in  a(\Omega ), \\ 0 ,&  x\in \mathbb H\setminus\overline{a( \Omega )}.  \end{array}       \right.    
	\end{align*} 
	Finally, write   $a(x)$ for   $x\in  \Omega$, then    
	\begin{align*}   & \displaystyle \int_{\partial  \Omega }  \|\underline{a(\underline\tau)} \|^2  \big[  {g (\tau)} J_{a}\nu_{a(\tau)}   A ( a(x), a(\tau))  - A (a(\tau), a(x)) J_a \nu_{a(\tau)}   f (\tau) \big]  \nonumber \\
		& +  		\int_{ \Omega } \big[ B (a(y), a(x))f (y)  -     
		g (y) C (a(x),a(y))	\big]
		M_a	dy  \nonumber   \\ 
		&  +  2	\int_{ \Omega }  \bigg[ 	   A (a(y), a(x))  G[f\circ a^{-1}] \circ a (y)  \\
		& \qquad   -   G_r[g\circ a^{-1}]\circ a (y) A (a(x),a(y))  \bigg]
		M_a dy \nonumber \\
		= &       \left\{ \begin{array}{ll} f (x) + g (x), &  x\in    \Omega  , \\ 0 ,&  x\in \mathbb H\setminus\overline{ ( \Omega) },  \end{array}       \right.    
	\end{align*}
	Finally, use 
	$$   {}^{-\underline{a}}M\circ \mathcal H_a (f) =   W_{a } \circ  G   \circ W_{a^{-1}}  [f]  , 
	\quad 	{}^{-\underline{a}}M\circ \mathcal H_{a,r} (g) =    W_{a } \circ  G_r   \circ W_{a^{-1}}  [g]  $$
of 	Proposition \ref{HGQuaternions}.
\end{proof}
\begin{cor} \label{cor_quaternionic_Borel-Pompeiu_form_L_a} (Cauchy-type formula  in $\textrm{Ker}(\mathcal H_a)$ and $\textrm{Ker}(\mathcal H_{a,r})$).
According to the hypothesis of Theorem \ref{quaternionic_Borel-Pompeiu_form_La} and if $f\in \textrm{Ker}(\mathcal H_a)\cap  C^{1}(\overline{ \Omega },\mathbb H)$ and	$g\in \textrm{Ker}(\mathcal H_{a,r})\cap  C^{1}(\overline{ \Omega },\mathbb H)$, then 
	\begin{align*}   & \displaystyle \int_{\partial  \Omega }  \|\underline{a}(\underline\tau)  \|^2  \big[  {g (\tau)} J_{a}\nu_{a(\tau)}   A ( a(x), a(\tau))  - A (a(\tau), a(x)) J_a \nu_{a(\tau)}   f (\tau) \big]  \nonumber \\
		& +  		\int_{ \Omega } \big[ B (a(y), a(x))f (y)  -     
		g (y) C (a(x),a(y))	\big]
		M_a	dy  \nonumber   \\ 
		= &       \left\{ \begin{array}{ll} f (x) + g (x), &  x\in  ( \Omega) , \\ 0 ,&  x\in \mathbb H\setminus\overline{  ( \Omega) }.  \end{array}       \right.    
	\end{align*}
\end{cor}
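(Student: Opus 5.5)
The plan is to derive the corollary directly from Theorem \ref{quaternionic_Borel-Pompeiu_form_La}, by showing that the third (volume) integral in that Borel--Pompeiu-type formula vanishes when $f\in\textrm{Ker}(\mathcal H_a)$ and $g\in\textrm{Ker}(\mathcal H_{a,r})$. The hypotheses on $\Omega$ and $a(\Omega)$ are exactly those of the theorem, and $f,g\in C^1(\overline{\Omega},\mathbb H)$. Hence the full identity of Theorem \ref{quaternionic_Borel-Pompeiu_form_La} holds verbatim for this pair, and it only remains to drop one term.

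For the vanishing, use the integrand of the third integral,
$$A(a(y),a(x))\,\underline a(\underline y)\,\mathcal H_a[f](y)+\mathcal H_{a,r}[g](y)\,\underline a(\underline y)\,A(a(x),a(y)).$$
Since $\mathcal H_a[f]\equiv 0$ and $\mathcal H_{a,r}[g]\equiv 0$ on $\Omega$, both summands vanish pointwise. Left or right multiplication by the bounded factors $\underline a$, $A(\cdot,\cdot)$ and $M_a$ does not affect this.

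To be safe against the sign and factor bookkeeping in the last step of that proof, I would instead argue through Proposition \ref{HGQuaternions}. It gives
$$W_a\circ G\circ W_{a^{-1}}[f]=-\underline a\,\mathcal H_a(f)=0,\qquad W_a\circ G_r\circ W_{a^{-1}}[g]=-\underline a\,\mathcal H_{a,r}(g)=0.$$
So $G[f\circ a^{-1}]=0$ and $G_r[g\circ a^{-1}]=0$ on $a(\Omega)$, because $a$ is a bijection. This is exactly the remark stating that $f\in\textrm{Ker}(\mathcal H_a)$ iff $W_{a^{-1}}[f]\in\textrm{Ker}(G)$, together with its right analogue.

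The cleanest route then avoids the third integral altogether. Apply Proposition \ref{quaternionic_Cauchy_integral_form} to $f\circ a^{-1}\in\textrm{Ker}(G)$ and $g\circ a^{-1}\in\textrm{Ker}(G_r)$ on $a(\Omega)$; this is legitimate since $\overline{a(\Omega)}\subset\mathbb H\setminus\mathbb R$ and $\partial a(\Omega)$ is compact and smooth. Then perform the same change of variables $y=a(q)$ as in the proof of Theorem \ref{quaternionic_Borel-Pompeiu_form_La}, replacing the evaluation point by $a(x)$ with $x\in\Omega$. This produces the boundary term with the factors $J_a$ and $\|\underline a\|^2$ and the volume term with weight $M_a$, with the right-hand side $f(x)+g(x)$ for $x\in\Omega$ and $0$ outside.

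The only delicate point is the exterior case. Since $a$ is a diffeomorphism, $a(x)\notin\overline{a(\Omega)}$ exactly when $x\notin\overline{\Omega}$, so the zero branch transfers correctly. The rest is routine.
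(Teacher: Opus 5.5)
Your proposal is correct, and its first step is exactly the paper's argument. The paper gives no separate proof of the corollary: it is read off Theorem \ref{quaternionic_Borel-Pompeiu_form_La} by noting that the third volume integral disappears because $\mathcal H_a[f]\equiv 0$ and $\mathcal H_{a,r}[g]\equiv 0$ on $\Omega$. One small wording point: $A(a(y),a(x))$ is not bounded, since it behaves like $\|a(y)-a(x)\|^{-3}$ near $y=x$. This is harmless, because the integrand is zero for every $y\neq x$.

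The route you end up preferring is organized differently. You first transfer the kernel conditions to $f\circ a^{-1}\in\textrm{Ker}(G)$ and $g\circ a^{-1}\in\textrm{Ker}(G_r)$ on $a(\Omega)$. You then apply Proposition \ref{quaternionic_Cauchy_integral_form} there, and only afterwards change variables. In effect you rerun the proof of Theorem \ref{quaternionic_Borel-Pompeiu_form_La} with the Cauchy formula in place of Theorem \ref{quaternionic_Borel-Pompeiu_form}.

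What this buys is independence from the last step of that proof, and your suspicion there is justified. Since $G[f\circ a^{-1}]\circ a=-\underline a\,\mathcal H_a(f)$, transporting Theorem \ref{quaternionic_Borel-Pompeiu_form} literally gives $-A(a(y),a(x))\,\underline a\,\mathcal H_a[f]$, not $+$. A direct computation also gives $G_r[g\circ a^{-1}]\circ a=-\mathcal H_{a,r}(g)\,\underline a$. That is a right multiplication, not the left multiplication written in Proposition \ref{HGQuaternions} and repeated in your quotation of it.

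For the corollary itself none of this matters, since those terms vanish whatever their sign or side. So the one-line specialization is just as rigorous and shorter. Both routes rely equally on the paper's informal change of variables encoded in $J_a$ and $M_a$.

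Finally, in either route the zero branch only makes sense for $x\in((s,t)\times U)\setminus\overline{\Omega}$, where $a(x)$ is defined. For such $x$ your equivalence holds because $a$ is a homeomorphism onto $(s,t)\times V$ and $\overline{a(\Omega)}\subset(s,t)\times V$. Hence $a\big(\overline{\Omega}\cap((s,t)\times U)\big)=\overline{a(\Omega)}$.
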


\begin{thm} \label{quaternionic_Borel-Pompeiu_form_D}(Borel-Pompeiu-type formula associated to $  {D}_u$).	
Suppose   the hypothesis and notation of Theorem \ref{quaternionic_Borel-Pompeiu_form_La}. If
	$u_i =     \sum_{j=1}^3  a _j   \frac{\partial (a^{-1})_i  }{\partial  y_j}\circ    a    $ for $i=1,2, 3$, then
	\begin{align*}   & \displaystyle \int_{\partial  \Omega }  \|\underline a(\underline \tau)  \|^2  \big[  {g (\tau)} J_{a}\nu_{a(\tau)}  A ( a(x), a(\tau))  - A (a(\tau), a(x)) J_a \nu_{a(\tau)}  f (\tau) \big]  \nonumber \\
		& +  		\int_{ \Omega } \big[ B (a(y), a(x))f (y)  -     
		g (y) C (a(x),a(y))	\big]
		M_a	dy  \nonumber   \\ 
		&  +  2	\int_{ \Omega }  \big[ 	   A (a(y), a(x)) 
		\underline{a}(\underline y) (1  +{\underline a} (\underline y) )
		{\frac {\partial f}{\partial y_0}}(y)
		\\
		& \qquad + 
		{\frac {\partial f}{\partial y_0}}(y) (1  +{\underline a}(\underline y) )  \underline{a}(\underline y)   A (a(x),a(y))  \big]
		M_a dy \nonumber \\
		&  +  2	\int_{ \Omega }  \big[ 	   A (a(y), a(x)) 
		\underline{a}(\underline y) {  {D}_u f}(y)    + 
		{ {D}_u g}(y) \underline{a}(\underline y)   A (a(x),a(y))  \big]
		M_a dy \nonumber \\
		= &       \left\{ \begin{array}{ll} f (x) + g (x), &  x\in  ( \Omega) , \\ 0 ,&  x\in \mathbb H\setminus\overline{  ( \Omega) },  \end{array}       \right.    
	\end{align*}
	for all  $f ,g  \in C^{1}(\overline{ \Omega },\mathbb H)$.	
\end{thm}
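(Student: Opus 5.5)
The plan is to derive the formula directly from Theorem \ref{quaternionic_Borel-Pompeiu_form_La}, using only the algebraic link between $\mathcal H_a$, $\mathcal H_{a,r}$ and $D_u$ from Remark \ref{remDHQuater}. With $u_i=\sum_{j=1}^3 a_j \frac{\partial (a^{-1})_i}{\partial y_j}\circ a$, the definitions of $\mathcal H_a$ and $D_u$ give
\begin{align*}
\mathcal H_a[f] = (1+\underline a)\frac{\partial f}{\partial x_0} - D_u f .
\end{align*}
The real coefficients $u_i$ commute with quaternions, so the right-hand analogue is
\begin{align*}
\mathcal H_{a,r}[g] = \frac{\partial g}{\partial x_0}(1+\underline a) - D_u g .
\end{align*}
I will check both identities term by term from the definitions; they are the only new ingredient.

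First, I write down the Borel-Pompeiu formula of Theorem \ref{quaternionic_Borel-Pompeiu_form_La} for the given $f,g\in C^1(\overline\Omega,\mathbb H)$. Its boundary integral and its $B,C$ volume integral are already the first two lines of the claimed identity. It remains to treat the third integral,
\begin{align*}
2\int_\Omega \big[A(a(y),a(x))\,\underline a(\underline y)\,\mathcal H_a[f](y) + \mathcal H_{a,r}[g](y)\,\underline a(\underline y)\,A(a(x),a(y))\big] M_a\, dy .
\end{align*}
I substitute the two identities above into this integral and expand by distributivity. Since $A$, $\underline a$ and $M_a$ are the same factors throughout, the integral splits into a piece containing $(1+\underline a)\partial_{y_0}$ and a piece containing $D_u$. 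Quaternionic multiplication is associative, so each product regroups as $A\,\underline a\,(1+\underline a)\,\partial_{y_0} f$ and $\partial_{y_0} g\,(1+\underline a)\,\underline a\, A$.

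The $(1+\underline a)\partial_{y_0}$ piece becomes the third integral in the statement. The $D_u$ piece becomes the fourth integral, with sign determined by the minus in the identities.

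The main obstacle is bookkeeping rather than analysis. I have to track three things consistently.
\begin{itemize}
\item The signs coming from ${}^{-\underline a}M$ in Proposition \ref{HGQuaternions}. The previous theorem's proof passes from $-G_r[\cdot]\circ a$ to $+\mathcal H_{a,r}\,\underline a$, so the sign conventions are already somewhat loose.
\item Whether the $g$-term should read $\partial_{y_0}g$ rather than $\partial_{y_0}f$. The statement writes $f$, which looks like a typo; I expect it should be $g$.
\item The placement of $\underline a$ relative to $(1+\underline a)$. This is harmless, since the two commute.
\end{itemize}
I will adopt the sign convention of Theorem \ref{quaternionic_Borel-Pompeiu_form_La} as stated and propagate it. The final signs of the $D_u$ and $\partial_{y_0}$ terms should then be read off consistently: they carry opposite relative signs, which may need adjusting in the displayed formula. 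No further analytic justification is needed, because all integrands are continuous on $\overline\Omega$ under the standing hypotheses.
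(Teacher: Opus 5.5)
Your argument is the paper's. Its proof of Theorem \ref{quaternionic_Borel-Pompeiu_form_D} consists only of substituting the identities \eqref{HDquaternionic} into the third volume integral of Theorem \ref{quaternionic_Borel-Pompeiu_form_La}, and your check of those identities is correct. The discrepancy you sensed is real, and you should state it as a definite correction rather than a hedge. Substituting into Theorem \ref{quaternionic_Borel-Pompeiu_form_La} as printed replaces its third integral by
\begin{align*}
&2\int_\Omega\big[A(a(y),a(x))\,\underline a\,(1+\underline a)\,\tfrac{\partial f}{\partial y_0}+\tfrac{\partial g}{\partial y_0}\,(1+\underline a)\,\underline a\,A(a(x),a(y))\big]M_a\,dy\\
&-2\int_\Omega\big[A(a(y),a(x))\,\underline a\,D_uf+D_ug\,\underline a\,A(a(x),a(y))\big]M_a\,dy .
\end{align*}
So the displayed statement, with $+$ in front of the $D_u$ integral and $\partial f/\partial y_0$ in the $g$-term, is not what this argument produces. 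The paper's one-line proof does not notice this either.

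You have also placed the sign looseness in the wrong term. The $g$-step you singled out is actually correct. A direct computation gives $G_r[g\circ a^{-1}]\circ a=-\mathcal H_{a,r}(g)\,\underline a$. This is right multiplication by $\underline a$, not the left multiplication ${}^{-\underline a}M$ written in Proposition \ref{HGQuaternions}. Hence $-G_r[g\circ a^{-1}]\circ a\;A=+\mathcal H_{a,r}(g)\,\underline a\,A$, as printed.

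The real slip is in the $f$-term. Since $G[f\circ a^{-1}]\circ a=-\underline a\,\mathcal H_a(f)$, the term $A\,G[f\circ a^{-1}]\circ a$ equals $-A\,\underline a\,\mathcal H_a(f)$, not the $+A\,\underline a\,\mathcal H_a(f)$ appearing in Theorem \ref{quaternionic_Borel-Pompeiu_form_La}. If you carry the signs from Theorem \ref{quaternionic_Borel-Pompeiu_form} itself, the $f$-parts of both new integrals flip. The correct replacement is then
\begin{align*}
&2\int_\Omega\big[-A(a(y),a(x))\,\underline a\,(1+\underline a)\,\tfrac{\partial f}{\partial y_0}+\tfrac{\partial g}{\partial y_0}\,(1+\underline a)\,\underline a\,A(a(x),a(y))\big]M_a\,dy\\
&+2\int_\Omega\big[A(a(y),a(x))\,\underline a\,D_uf-D_ug\,\underline a\,A(a(x),a(y))\big]M_a\,dy .
\end{align*}
Under either reading, the identity as displayed in the statement does not result from the substitution. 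Your proof should therefore conclude with this corrected formula.
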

\begin{proof}
	Use the identities  
	\begin{align}\label{HDquaternionic}
		\mathcal H_a(f)  =  & (1  +{\underline a} ) 
		{\frac {\partial f}{\partial y_0}}
		-  {  {D}_u f}  , \nonumber \\ 
	\mathcal 	H_{a,r}(g)  =  &  
		{\frac {\partial g}{\partial y_0}} (1  +{\underline a} )
		- {  {D}_u g},   
	\end{align} 
	deduced from Remark \ref{remDHQuater} and also use Theorem \ref{quaternionic_Borel-Pompeiu_form_La}. 
\end{proof}

\begin{cor}\label{quaternionic_Cauchy_form_D} (Cauchy-type formula associated to $  {D}_u$).  According to  hypothesis and notation of Theorem \ref{quaternionic_Borel-Pompeiu_form_La}. Also suppose that 
	$u_i =     \sum_{j=1}^3  a _j   \frac{\partial (a^{-1})_i  }{\partial  y_j}\circ    a    $ for $i=1,2, 3$. 
  If $f ,g  \in \textrm{Ker}( {D}_u) \cap C^{1}(\overline{ \Omega },\mathbb H)$, then 
	\begin{align*}   
		& \displaystyle \int_{\partial  \Omega }  \|\underline a(\underline\tau) \|^2  \big[  {g (\tau)} J_{a}\nu_{a(\tau)}   A ( a(x), a(\tau))  - A (a(\tau), a(x)) J_a \nu_{a(\tau)}   f (\tau) \big]  \nonumber \\
		& +  		\int_{ \Omega } \big[ B (a(y), a(x))f (y)  -     
		g (y) C (a(x),a(y))	\big]
		M_a	dy  \nonumber   \\ 
		&  +  2	\int_{ \Omega }  \big[ 	   A (a(y), a(x)) 
		\underline{a}(\underline y) (1  +{\underline a} (\underline y)) 
		{\frac {\partial f}{\partial y_0}}(y)
		\\
		& \qquad  + 
		{\frac {\partial g}{\partial y_0}}(y) (1  +{\underline a} (\underline y))  \underline{a}(\underline y)   A (a(x),a(y))  \big]
		M_a dy \nonumber \\
		= & \left\{ \begin{array}{ll} f (x) + g (x), &  x\in  ( \Omega) , \\ 0 ,&  x\in \mathbb H\setminus\overline{(\Omega)}. 
		\end{array} \right.    
	\end{align*}
\end{cor}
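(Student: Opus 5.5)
The corollary will follow from Theorem \ref{quaternionic_Borel-Pompeiu_form_D} by deleting the last volume integral. I take $f,g\in C^{1}(\overline{\Omega},\mathbb H)$ with $D_u f = D_u g = 0$ on $\Omega$. The hypotheses of Theorem \ref{quaternionic_Borel-Pompeiu_form_La} are assumed, and $u_i$ is chosen as in the statement. Hence the four-line Borel--Pompeiu identity of Theorem \ref{quaternionic_Borel-Pompeiu_form_D} holds verbatim for this pair.

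Its last volume term is
\[
2\int_{\Omega}\big[A(a(y),a(x))\,\underline a(\underline y)\,D_u f(y) + D_u g(y)\,\underline a(\underline y)\,A(a(x),a(y))\big]M_a\,dy .
\]
Because $D_u f\equiv 0$ and $D_u g\equiv 0$ on $\Omega$, the integrand vanishes identically, so the whole integral is zero. Nothing is needed about the integrability of the kernels $A$ beyond what Theorem \ref{quaternionic_Borel-Pompeiu_form_D} already gives.

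The remaining terms are the boundary integral, the $B$, $C$ integral and the $\partial/\partial y_0$ integral. Together they equal $f(x)+g(x)$ for $x\in\Omega$ and $0$ for $x\notin\overline{\Omega}$, which is the claim.

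The one point needing care is the $\partial/\partial y_0$ term for $g$. In Theorem \ref{quaternionic_Borel-Pompeiu_form_D} it is written with $\partial f/\partial y_0$, but in the corollary it appears with $\partial g/\partial y_0$. I would check, via the identity $\mathcal H_{a,r}(g)=\frac{\partial g}{\partial y_0}(1+\underline a)-D_u g$ from \eqref{HDquaternionic}, that the right-hand term of the theorem really carries $g$; the $f$ there is a typo. With that reading, the corollary is an immediate specialization of the theorem.
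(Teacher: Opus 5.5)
Your reduction is the route the paper intends. The paper gives no separate proof: the corollary is Theorem~\ref{quaternionic_Borel-Pompeiu_form_D} with the $D_u$-integral set to zero. Your reading of the $\partial f/\partial y_0\to\partial g/\partial y_0$ typo is also right.

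The step that fails is taking the rest of the third integral of Theorem~\ref{quaternionic_Borel-Pompeiu_form_D} verbatim. Its $f$-term has the wrong sign, and your specialization inherits this. The check you propose has to go back to Theorem~\ref{quaternionic_Borel-Pompeiu_form}; it cannot stop at Theorem~\ref{quaternionic_Borel-Pompeiu_form_La}, which already contains the slip.

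By Proposition~\ref{HGQuaternions}, $G[f\circ a^{-1}]\circ a=-\underline a\,\mathcal H_a(f)$. A direct computation, using $|\underline a|^2=-\underline a^2$, gives $G_r[g\circ a^{-1}]\circ a=-\mathcal H_{a,r}(g)\,\underline a$, with $\underline a$ on the right (Proposition~\ref{HGQuaternions} puts it on the left, which is not correct for $G_r$). So the term $2\int[A(y,x)G[f](y)-G_r[g](y)A(x,y)]\,dy$ of Theorem~\ref{quaternionic_Borel-Pompeiu_form} becomes
\[
2\int_{\Omega}\big[-A(a(y),a(x))\,\underline a\,\mathcal H_a[f](y)+\mathcal H_{a,r}(g)(y)\,\underline a\,A(a(x),a(y))\big]M_a\,dy ,
\]
with opposite signs on the two pieces. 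This agrees with Proposition~\ref{prop3.8D}, where the same substitution gives the minus sign on both $\mathcal H$-terms, matching their relative sign in Proposition~\ref{StokesG}. Theorem~\ref{quaternionic_Borel-Pompeiu_form_La} instead prints $+$ on both. No convention for $J_a$ or $M_a$ can absorb this, since both pieces sit in the same integral against the same factor $M_a$.

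For $f,g\in\textrm{Ker}(D_u)$, \eqref{HDquaternionic} gives $\mathcal H_a(f)=(1+\underline a)\frac{\partial f}{\partial y_0}$ and $\mathcal H_{a,r}(g)=\frac{\partial g}{\partial y_0}(1+\underline a)$. So the identity that actually follows from Theorem~\ref{quaternionic_Borel-Pompeiu_form} has third integral
\[
2\int_{\Omega}\Big[-A(a(y),a(x))\,\underline a(1+\underline a)\frac{\partial f}{\partial y_0}(y)+\frac{\partial g}{\partial y_0}(y)(1+\underline a)\,\underline a\,A(a(x),a(y))\Big]M_a\,dy .
\]
The formula splits into an identity for $f$ (put $g=0$) and one for $g$ (put $f=0$). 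The printed version therefore holds only if $\int_\Omega A(a(y),a(x))\,\underline a(1+\underline a)\frac{\partial f}{\partial y_0}\,M_a\,dy=0$, which is false in general.

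For a concrete case, take:
\begin{itemize}
\item $a$ the identity, so $u_i=x_i$;
\item $g=0$ and $f(x)=e^{-x_0}x_1\in\textrm{Ker}(D_u)$;
\item $\Omega$ a small ball in $\{x_1>0\}$ and $x=Re_1$.
\end{itemize}
Then the integral equals $-\frac{1}{2\pi^2R^4}\int_\Omega(1+\underline y)e^{-y_0}y_1\,dy+O(R^{-5})$, which is nonzero for large $R$.

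Your vanishing step for the last integral is unaffected. That integral has its own sign slip in Theorem~\ref{quaternionic_Borel-Pompeiu_form_D} (its $g$-term should carry $-D_ug\,\underline a\,A$), but it is zero anyway. So the specialization is sound, but the statement it delivers is only true once the $f$-term of the $\partial/\partial y_0$ integral carries a minus sign.
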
 
\begin{prop}\label{prop3.8D} (Stokes' theorem associated to $ \mathcal H_a$  and $  \mathcal  H_{a,r}$).
	Let $\Omega\subset (s,t) \times U$ be a domain such that $\overline{a(\Omega)}\subset (s,t)\times V  $ and  $\partial ( \Omega )$  
	and $\partial ( a(\Omega))$ are $3-$dimensional compact smooth surfaces  and $\overline{a(\Omega)}\subset \mathbb H\setminus \mathbb R$. Then
	\begin{align*} & \displaystyle  \int_{\partial \Omega } g J_a \nu_{a(x)} f  = 4 \int_{ \Omega } g  \frac{\underline a(\underline x )}{\|\underline a(\underline x) \|^2} f M_a dx \\ 
		&- 2 \int_{ \Omega }  \frac{1}{\|\underline a(\underline x) \|^2}   \left[ 
		\mathcal H_{a,r}(g) (x)  \underline a (\underline x) f(x)   +    g(x)  \underline a(\underline x)   \mathcal H_a(f)(x)  \right] M_a dx  
	\end{align*}
	for all  $f ,g  \in C^{1}(\overline{ \Omega },\mathbb H)$.	
\end{prop}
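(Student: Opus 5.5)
The plan is to transport Proposition \ref{StokesG} from $a(\Omega)$ back to $\Omega$ through the diffeomorphism $a$, in the same way Theorem \ref{quaternionic_Borel-Pompeiu_form_La} was obtained from Theorem \ref{quaternionic_Borel-Pompeiu_form}.

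\textbf{Step 1: apply the Stokes formula on $a(\Omega)$.} Given $f,g\in C^1(\overline{\Omega},\mathbb H)$, the functions $f\circ a^{-1}$ and $g\circ a^{-1}$ lie in $C^1(\overline{a(\Omega)},\mathbb H)$. The set $a(\Omega)$ is a domain whose boundary is a compact smooth $3$-surface, and $\overline{a(\Omega)}\subset\mathbb H\setminus\mathbb R$. So Proposition \ref{StokesG} applies and gives
$$\int_{\partial a(\Omega)} (g\circ a^{-1})\,\nu_y\, (f\circ a^{-1}) = 4\int_{a(\Omega)} (g\circ a^{-1}) \frac{\underline y}{\|\underline y\|^2}(f\circ a^{-1})\,dy + 2\int_{a(\Omega)}\frac{1}{\|\underline y\|^2}\Big(G_r[g\circ a^{-1}]\,(f\circ a^{-1}) + (g\circ a^{-1})\,G[f\circ a^{-1}]\Big)dy .$$

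\textbf{Step 2: change variables $y=a(x)$.} Since $a$ is a diffeomorphism of $\overline\Omega$ onto $\overline{a(\Omega)}$, it maps $\partial\Omega$ onto $\partial a(\Omega)$. On the boundary, the form $\nu_y$ becomes $J_a\nu_{a(x)}$, with $J_a$ the surface Jacobian factor as in Theorem \ref{quaternionic_Borel-Pompeiu_form_La}. In the volume integrals, $dy$ becomes $M_a\,dx$ and $\underline y$ becomes $\underline a(\underline x)$. The first two terms then take exactly the form stated.

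\textbf{Step 3: convert the $G$-terms into $\mathcal H_a$-terms.} By Proposition \ref{HGQuaternions},
$$G[f\circ a^{-1}]\circ a=-\underline a\,\mathcal H_a(f).$$
Substituting this into $(g\circ a^{-1})\,G[f\circ a^{-1}]$ gives $-g\,\underline a\,\mathcal H_a(f)$, which is the second bracketed term with its minus sign.

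For the right operator the product order matters. Computing directly from $G_r[h]=|\underline y|^2\partial_{y_0}h+\sum_j y_j\partial_{y_j}h\,\underline y$ as in Proposition \ref{HyG}, I expect
$$G_r[g\circ a^{-1}]\circ a=-\mathcal H_{a,r}(g)\,\underline a,$$
that is, with $\underline a$ multiplied on the right. This matches the term $\mathcal H_{a,r}(g)\,\underline a\,f$ in the statement. Collecting the terms gives the claimed identity with the factor $-2$.

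\textbf{Main obstacle.} The delicate step is this last sign and order bookkeeping. Proposition \ref{HGQuaternions} writes ${}^{-\underline a}M\circ\mathcal H_{a,r}$ as a left multiplication, so I would redo the chain-rule computation for $G_r$ explicitly. I would check that
$$\sum_j y_j\,\partial_{y_j}(g\circ a^{-1})\circ a=\sum_i u_i\,\partial_{x_i}g,$$
and confirm that $|\underline a|^2\partial_{x_0}g$ factors as $-\big(\partial_{x_0}g\,\underline a\big)\underline a$ using $\underline a^2=-|\underline a|^2$. The second check is what makes $\underline a$ appear on the right, so that $\mathcal H_{a,r}(g)\,\underline a$ comes out.

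A secondary point is the meaning of $J_a$ for the boundary $3$-form $\nu$: it is the pullback of $\nu_y$ under $a$. I would take it as defined by that pullback, exactly as in Theorem \ref{quaternionic_Borel-Pompeiu_form_La}.
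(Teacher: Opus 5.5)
Your proposal is correct and follows the paper's own proof: apply Proposition \ref{StokesG} to $f\circ a^{-1}, g\circ a^{-1}$ on $a(\Omega)$, change variables $y=a(x)$, and replace the $G$- and $G_r$-terms by $-\underline a\,\mathcal H_a(f)$ and $-\mathcal H_{a,r}(g)\,\underline a$. Your explicit check that $\underline a$ must multiply $\mathcal H_{a,r}(g)$ on the right is what the stated formula actually requires; the paper's Proposition \ref{HGQuaternions} writes this as a left multiplication and its proof cites \eqref{HDquaternionic} instead, so your version is the more careful justification, and treating $J_a$ as the pullback of $\nu$ matches the paper's own informal convention.
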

\begin{proof}Apply Proposition \ref{StokesG}  	to funtions   $f\circ a^{-1},g\circ a^{-1} \in C^{1}(\overline{a(\Omega)},\mathbb H)$  
	and use \eqref{HDquaternionic}  to obtain  that 
	\begin{align*} & \displaystyle  \int_{\partial a(\Omega)} g\circ a^{-1} \nu_x  f\circ a^{-1} = 4 \int_{a(\Omega)} g\circ a^{-1}  \frac{\vec x }{\|\vec x \|^2} f\circ a^{-1}(x) dx \\ 
		&+ 2 \int_{a(\Omega)}  \frac{1}{\|\underline x \|^2}   \left(    G_r[g\circ a^{-1}] (x)f\circ a^{-1}(x)  +    g\circ a^{-1} (x) G[f\circ a^{-1}](x)  \right)dx  
	\end{align*}
	and change of variables    $ a(x)$ instead of $x$  to  obtain 
	\begin{align*} & \displaystyle  \int_{\partial  \Omega } g J_a \nu_{a(x)}  f 
		= 4 \int_{ \Omega } g  \frac{\underline a(\underline x )}{\|\underline  a(\underline x) \|^2} f(x) M_a dx \\ 
		&- 2 \int_{ \Omega }  \frac{1}{\|\underline  a(\underline x) \|^2}   \left[ 
		H_{a,r}(g)  (x) \underline a(\underline x)  f (x)  +    g(x)  \underline a(\underline x)   H_a(f)(x)  \right] M_a dx  .
	\end{align*}
\end{proof}

\begin{cor}\label{corstokesH} (Cauchy Theorem) According to the hypothesis of the previous proposition. If 
	$f \in \textrm{Ker}(\mathcal H_a)\cap  C^{1}(\overline{ \Omega },\mathbb H)$
	and $ g\in   \textrm{Ker}(\mathcal H_{a,r}) \cap   \in C^{1}(\overline{ \Omega },\mathbb H)$,
	then 
	\begin{align*} & \displaystyle  \int_{\partial \Omega } g J_a \nu_{a(x)} f  = 4 \int_{ \Omega } g  \frac{\underline a(\underline x )}{\|\underline a(\underline x) \|^2} f M_a dx . 
	\end{align*}
\end{cor}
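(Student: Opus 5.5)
The Cauchy Theorem is the special case of Proposition \ref{prop3.8D} in which both factors are annihilated by the relevant operators. The plan is to take the Stokes identity of Proposition \ref{prop3.8D}, valid for all $f,g\in C^1(\overline{\Omega},\mathbb H)$, and substitute the kernel conditions.

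First I note that the hypotheses of the corollary are exactly those of Proposition \ref{prop3.8D}. Namely, $\Omega\subset(s,t)\times U$ is a domain with $\overline{a(\Omega)}\subset ((s,t)\times V)\cap(\mathbb H\setminus\mathbb R)$, and both $\partial\Omega$ and $\partial(a(\Omega))$ are compact smooth $3$-dimensional surfaces. Hence for our $f,g\in C^1(\overline\Omega,\mathbb H)$ we have
\begin{align*}
\int_{\partial \Omega } g J_a \nu_{a(x)} f = {} & 4 \int_{ \Omega } g \frac{\underline a(\underline x )}{\|\underline a(\underline x) \|^2} f M_a dx \\
& - 2 \int_{ \Omega } \frac{1}{\|\underline a(\underline x) \|^2} \left[ \mathcal H_{a,r}(g)\, \underline a\, f + g\, \underline a\, \mathcal H_a(f) \right] M_a dx .
\end{align*}

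Next I use the kernel conditions. Since $f\in\textrm{Ker}(\mathcal H_a)$, we have $\mathcal H_a(f)\equiv 0$ on $\overline\Omega$, so $g\,\underline a\,\mathcal H_a(f)=0$. Since $g\in\textrm{Ker}(\mathcal H_{a,r})$, we have $\mathcal H_{a,r}(g)\equiv0$, so $\mathcal H_{a,r}(g)\,\underline a\, f=0$. Both vanishings hold pointwise, so the second volume integral disappears and the stated identity follows.

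The only point requiring care is that the removed integral is meaningful. The weight $1/\|\underline a(\underline x)\|^2$ is bounded on $\overline\Omega$, because $\overline{a(\Omega)}$ is compact and disjoint from $\mathbb R$, which means $\underline a$ does not vanish there. Thus all integrands are continuous on the compact set $\overline\Omega$ and no singular behaviour arises. Equivalently, one could read off the kernel conditions directly from Proposition \ref{HGQuaternions}: they say $G[f\circ a^{-1}]=0$ and $G_r[g\circ a^{-1}]=0$, and one could apply Proposition \ref{StokesG} on $a(\Omega)$ before changing variables. I expect no real obstacle here; the argument is a direct specialization.
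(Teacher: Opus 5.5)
Your proposal is correct and follows the paper's own (implicit) argument: the paper gives no separate proof and simply reads the corollary off Proposition \ref{prop3.8D}. As you do, it notes that $\mathcal H_a(f)=0$ and $\mathcal H_{a,r}(g)=0$ make the second volume integral vanish. Your check that $1/\|\underline a(\underline x)\|^2$ stays bounded on $\overline\Omega$, so every integral is well defined, is a harmless extra that the paper leaves unstated.
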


\begin{cor} (Stokes' theorem associated to $ {D}_u$). Suppose the hypothesis and notation  of Proposition \ref{prop3.8D}.   If
	 $u_i =    \sum_{j=1}^3 a_j    \frac{\partial (a^{-1})_i}{\partial  y_j}\circ    a   $ 
for $i=1,\dots, 3$. Then
	\begin{align*} & \displaystyle  \int_{\partial \Omega } g J_a \nu_{a(x)}  f  = 4 \int_{ \Omega } g  \frac{\underline a(\underline x )}{\|\underline a(\underline x) \|^2} f(x) M_a dx \\ 
		&- 2 \int_{ \Omega }  \frac{1}{\|\underline a(\underline x) \|^2}   \bigg[ 
		{\frac {\partial g}{\partial y_0}}(x) (1  +{\underline a} (\underline  x))
		\underline a(\underline  x)  f (x)  \\
		& \qquad  +    g(x)  \underline a (\underline x)  (1  +{\underline a}(\underline  x) ) 
		{\frac {\partial f}{\partial y_0}}(x)
		\bigg] M_a dx  \\
		&+ 2 \int_{ \Omega }  \frac{1}{\|\underline a(\underline x) \|^2}   \bigg[ 
		{ {D}_u g}(x)  \underline a(\underline x)  f (x)  +    g (x) \underline a (\underline x )  {  {D}_u f (x)}  \bigg] M_a dx  
	\end{align*}
	for all  $f ,g  \in C^{1}(\overline{ \Omega },\mathbb H)$.	
\end{cor}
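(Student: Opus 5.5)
The plan is to obtain this corollary directly from Proposition \ref{prop3.8D}, substituting the identities \eqref{HDquaternionic} for $\mathcal H_a(f)$ and $\mathcal H_{a,r}(g)$. The hypotheses coincide with those of Proposition \ref{prop3.8D}. The choice $u_i = \sum_{j=1}^3 a_j \frac{\partial (a^{-1})_i}{\partial y_j}\circ a$ is exactly the one under which Remark \ref{remDHQuater} gives ${D}_u f = (1+\underline a)\frac{\partial f}{\partial y_0} - \mathcal H_a(f)$, and correspondingly ${D}_u g = \frac{\partial g}{\partial y_0}(1+\underline a) - \mathcal H_{a,r}(g)$ for the right version. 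Since $y_0=x_0$ under $a$, the derivatives $\partial/\partial y_0$ and $\partial/\partial x_0$ coincide here.

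First, I will write the Stokes identity of Proposition \ref{prop3.8D} for arbitrary $f,g\in C^1(\overline\Omega,\mathbb H)$. Next, in the last integrand I replace $\mathcal H_{a,r}(g)$ by $\frac{\partial g}{\partial y_0}(1+\underline a) - {D}_u g$ and $\mathcal H_a(f)$ by $(1+\underline a)\frac{\partial f}{\partial y_0} - {D}_u f$. Multiplication is not commutative, so I must keep the factors in their original positions. The $\mathcal H_{a,r}(g)$ term then becomes $\frac{\partial g}{\partial y_0}(1+\underline a)\underline a f - {D}_u g\,\underline a f$. The $\mathcal H_a(f)$ term becomes $g\underline a(1+\underline a)\frac{\partial f}{\partial y_0} - g\underline a\,{D}_u f$. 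Finally, I distribute the factor $-2\int_\Omega \|\underline a\|^{-2}[\cdots]M_a\,dx$ over this difference, using linearity of the integral. The terms containing $\partial/\partial y_0$ keep the minus sign, and the terms containing ${D}_u$ acquire a plus sign, which gives exactly the stated formula.

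There is no real analytic obstacle, since all integrals are finite: $\overline{a(\Omega)}$ avoids $\mathbb R$, so $\|\underline a\|^{-2}$ is bounded on $\overline\Omega$. The only care needed is bookkeeping of signs and of the left/right placement of the quaternion-valued factors $\underline a$ and $(1+\underline a)$. In particular, $\underline a$ and $1+\underline a$ commute, but neither commutes with $f$ or $g$ in general. I will check this against Proposition \ref{HGQuaternions}, where ${}^{-\underline a}M\circ\mathcal H_a = W_a\circ G\circ W_{a^{-1}}$. This ensures the minus sign in Proposition \ref{prop3.8D} matches the plus sign in Proposition \ref{StokesG}.
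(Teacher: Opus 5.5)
Your proposal is correct and follows the paper's own one-line proof: substitute the identities \eqref{HDquaternionic} for $\mathcal H_a(f)$ and $\mathcal H_{a,r}(g)$ into Proposition \ref{prop3.8D}. Your explicit tracking of signs, and of the left/right placement of $\underline a$ and $(1+\underline a)$ relative to $f$ and $g$, is exactly what that argument leaves implicit.
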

\begin{proof}
Use  \eqref{HDquaternionic} in Proposition \ref{prop3.8D}.
\end{proof}

\begin{cor}   
	Suppose the hypothesis   of Proposition \ref{prop3.8D}. If 
	 $u_i =   \sum_{j=1}^3 a_j (\underline{x}) \frac{\partial (a^{-1})_i  }{\partial  y_j}\circ a  $,  
for $i=1,2, 3$, and  $  {D}_u g =  {D}_u f =0$ on $\Omega$, then
	\begin{align*} & \displaystyle  \int_{\partial \Omega } g J_a \nu_{a(x)} f = 4 \int_{\Omega} g \frac{\underline a(\underline x )}{\|\underline a(\underline x) \|^2} f (x)M_a dx \\ 
		&- 2 \int_{ \Omega }  \frac{1}{\|\underline a(\underline x) \|^2}   \bigg[ 
		{\frac {\partial g}{\partial y_0}}(x) (1  +{\underline a} (\underline x))
		\underline a(\underline x)  f(x)   +    g (x) \underline a (\underline x)  (1  +{\underline a}(\underline  x) ) 
		{\frac {\partial f}{\partial y_0}}(x)
		\bigg] M_a dx , 
	\end{align*}
	for all  $f ,g  \in C^{1}(\overline{ \Omega },\mathbb H)$.	
\end{cor}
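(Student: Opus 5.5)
The statement is the last corollary: if $D_u f = D_u g = 0$ on $\Omega$, the Stokes identity keeps only the boundary term, the $4\int g\,\underline a/\|\underline a\|^2 f$ term and the $\partial/\partial y_0$ terms. I plan to read it off from the preceding corollary (Stokes' theorem associated to $D_u$), which holds for all $f,g\in C^1(\overline\Omega,\mathbb H)$ under the hypotheses of Proposition \ref{prop3.8D} with the same choice $u_i=\sum_{j=1}^3 a_j\,\frac{\partial (a^{-1})_i}{\partial y_j}\circ a$. No new analysis is needed; only the vanishing of one integrand has to be checked.

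First, I will restate that corollary for the given $f,g$. Its right-hand side has three integrals: the $4\int_\Omega g\,\frac{\underline a}{\|\underline a\|^2} f\,M_a\,dx$ term, the term with the factors $(1+\underline a)\frac{\partial}{\partial y_0}$, and the term
$$2\int_\Omega \frac{1}{\|\underline a(\underline x)\|^2}\Big[D_u g(x)\,\underline a(\underline x)\, f(x)+g(x)\,\underline a(\underline x)\, D_u f(x)\Big]M_a\,dx .$$
Second, I will substitute $D_u f=D_u g=0$ pointwise on $\Omega$. Then the integrand of the last integral vanishes identically, since each summand contains a factor $D_u g$ or $D_u f$. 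Quaternionic noncommutativity causes no trouble here, because a product with a zero factor is zero in either order.

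Third, I must confirm that the $(1+\underline a)$ integral is well defined, i.e.\ that $1/\|\underline a(\underline x)\|^2$ stays bounded on $\overline\Omega$. This follows from $\overline{a(\Omega)}\subset\mathbb H\setminus\mathbb R$ and compactness of $\overline{a(\Omega)}$: $\underline a$ is bounded away from $0$ on $\overline\Omega$, and $M_a$ is continuous since $a$ is a $C^1$ diffeomorphism. Deleting the zero integral then gives exactly the stated identity.

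The only point needing care is this last well-definedness check. An alternative route is to use \eqref{HDquaternionic}, which gives $\mathcal H_a(f)=(1+\underline a)\partial_{y_0}f$ and $\mathcal H_{a,r}(g)=\partial_{y_0}g\,(1+\underline a)$, and insert these into Proposition \ref{prop3.8D}. I will use this only as a consistency check on signs and on the order of the factors $(1+\underline a)\underline a$ versus $\underline a(1+\underline a)$.
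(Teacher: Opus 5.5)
Your proposal is correct and follows the route the paper intends. The paper gives no separate proof; the corollary is simply the preceding Stokes-type corollary for $D_u$ (itself Proposition \ref{prop3.8D} combined with \eqref{HDquaternionic}) specialized to $D_u f = D_u g = 0$, so the last integral vanishes identically. Your third-step well-definedness check does no harm but is not needed, since the preceding corollary already presupposes that all its integrals exist.
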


\begin{rem}
Borel-Pompieu, Stokes and Cauchy type formulas  deeply related with  the solution of some Dirichlet type boundary-value problems.  Therefore, although it is not our objective in this work, the solution to some Dirichlet type boundary-value problems   induced by $\mathcal H_a$ and $D_u$ can be proposed.
\end{rem}

\begin{prop}\label{ConfCovPropL}(Conformal covariant property of $\mathcal H_a$). 
	Let $\Omega\subset \mathbb H$ be a domain and  
	let $T$ be a quternionic M\"obius transformation given by 
	Proposition \ref{ConfCovPropG}  such that   $\Omega, T (\Omega) \subset (s,t)\times V $. 
	Then  
	\begin{align*}
		{}^{\underline{I} }M \circ W_{a^{-1}} \circ  \mathcal  H_a \circ W_a \circ {}^{\mathcal  A_{\mathcal T} } M \circ W_{ \mathcal T }   =   {}^{\mathcal  B_{\mathcal T}     \underline{\mathcal T} }M \circ W_{\mathcal T} \circ  \mathcal H_a\circ W_a,    
	\end{align*}
	on $C^1({\mathcal T} (\Omega),\mathbb H)$, where $\underline I(x)= \underline x$ for all $x$
 	 and  $\mathcal A_{\mathcal T}$ and $\mathcal B_{\mathcal T}$ are  given by \eqref{functions}. 
\end{prop}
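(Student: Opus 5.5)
The plan is to transport the identity back to the operator $G$ and then invoke Proposition \ref{ConfCovPropG}. The key tool is the relation from Proposition \ref{HGQuaternions},
$$ {}^{-\underline{a}}M\circ \mathcal H_a = W_a\circ G\circ W_{a^{-1}} \quad \text{on } C^1((s,t)\times U,\mathbb H).$$
Composing on the left with $W_{a^{-1}}$ and on the right with $W_a$ gives, on functions defined on subsets of $(s,t)\times V$,
$$ W_{a^{-1}}\circ {}^{-\underline{a}}M\circ \mathcal H_a\circ W_a = G .$$
Since $W_{a^{-1}}\circ {}^{-\underline a}M = {}^{-\underline a\circ a^{-1}}M\circ W_{a^{-1}}$ and $\underline a\circ a^{-1}(y)=\underline y$, this reads
$$ {}^{-\underline I}M\circ W_{a^{-1}}\circ \mathcal H_a\circ W_a = G .$$
The sign is a harmless convention issue; I would track it consistently or absorb it, since it appears on both sides. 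In other words, $W_{a^{-1}}\circ\mathcal H_a\circ W_a$ equals $G$ up to left multiplication by $\underline I$ (with its inverse $-\underline y/\|\underline y\|^2$ off the real axis).

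Next, I would substitute this expression for $G$ into both sides of the covariance identity $G\circ({}^{\mathcal A_{\mathcal T}}M\circ W_{\mathcal T}) = ({}^{\mathcal B_{\mathcal T}}M\circ W_{\mathcal T})\circ G$ of Proposition \ref{ConfCovPropG}. The hypothesis $\Omega,\mathcal T(\Omega)\subset (s,t)\times V$ ensures every composition is defined.

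\begin{itemize}
\item On the left-hand side this directly yields ${}^{\underline I}M\circ W_{a^{-1}}\circ\mathcal H_a\circ W_a\circ {}^{\mathcal A_{\mathcal T}}M\circ W_{\mathcal T}$, up to the sign.
\item On the right-hand side we get ${}^{\mathcal B_{\mathcal T}}M\circ W_{\mathcal T}\circ {}^{\underline I}M\circ W_{a^{-1}}\circ\mathcal H_a\circ W_a$.
\end{itemize}

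The remaining step is commuting $W_{\mathcal T}$ past the multiplication operator: $W_{\mathcal T}\circ{}^{\underline I}M = {}^{\underline{\mathcal T}}M\circ W_{\mathcal T}$. Here $\underline{\mathcal T}$ denotes the vector part of $\mathcal T$, because $\underline I\circ\mathcal T=\underline{\mathcal T}$. This produces the multiplier $\mathcal B_{\mathcal T}\underline{\mathcal T}$ appearing in the statement.

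The main obstacle is bookkeeping in this last step: the composition order of $W_a$, $W_{a^{-1}}$ and $W_{\mathcal T}$ as they act on the right-hand side. Matching the statement exactly requires reading $W_{\mathcal T}\circ\mathcal H_a\circ W_a$ with the same implicit intertwining $W_{a^{-1}}$ that appears on the left. I would first verify the identity in the form
$${}^{\underline I}M\circ W_{a^{-1}}\circ\mathcal H_a\circ W_a\circ{}^{\mathcal A_{\mathcal T}}M\circ W_{\mathcal T} = {}^{\mathcal B_{\mathcal T}\underline{\mathcal T}}M\circ W_{\mathcal T}\circ W_{a^{-1}}\circ\mathcal H_a\circ W_a .$$
I would then interpret the stated version accordingly, checking the domains (where $\underline{\mathcal T}\neq 0$) so that multiplying by $\underline I$ and its inverse is legitimate.
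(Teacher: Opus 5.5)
Your proposal is correct and follows the paper's proof. Both rewrite $G$ as $W_{a^{-1}}\circ{}^{-\underline a}M\circ\mathcal H_a\circ W_a=-{}^{\underline I}M\circ W_{a^{-1}}\circ\mathcal H_a\circ W_a$ via Proposition \ref{HGQuaternions}, substitute this into Proposition \ref{ConfCovPropG}, commute $W_{\mathcal T}$ past ${}^{\underline I}M$, and cancel the common minus sign. Your reading that the right-hand side must carry $W_{a^{-1}}$, i.e.\ ${}^{\mathcal B_{\mathcal T}\underline{\mathcal T}}M\circ W_{\mathcal T}\circ W_{a^{-1}}\circ\mathcal H_a\circ W_a$, is what the paper's proof actually establishes, as its pointwise form $(\mathcal H_a[g\circ a]\circ a^{-1})\circ\mathcal T(q)$ shows.
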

\begin{proof}
	From Propositions \ref{HGQuaternions} and  \ref{ConfCovPropG} we obtain that  
	$
		  G=    W_{a^{-1}}\circ {}^{-\underline{a}}M \circ\mathcal H_a \circ W_a$   and 
	\begin{align*}
		  W_{a^{-1}}\circ {}^{-\underline{a}}M \circ \mathcal H_a \circ W_a \circ ({}^{\mathcal A_{\mathcal T}}M\circ W_{\mathcal T})  
		& =   ({}^{\mathcal B_{\mathcal T}}M\circ W_{\mathcal T})\circ W_{a^{-1}}\circ {}^{-\underline{a}}M \circ \mathcal H_a \circ W_a, 
	\end{align*}
	on 	    $ C^1( (s,t)\times V ,\mathbb H) $. 
	The second identity is equivalent to
	\begin{align*}
		   - \underline{q} \mathcal H_a [ (\mathcal A\circ a ) (g\circ\mathcal T \circ a) ]\circ a^{-1} (q)   
		= & \mathcal {B_{\mathcal T}} (q) \ \underline{\mathcal T}(q)\ 
		( \mathcal H_a [ g\circ a]\circ a^{-1})\circ \mathcal T(q), \quad \forall q\in \Omega, 
	\end{align*}
	for all   $g \in C^1({\mathcal T}(\Omega),\mathbb H)$.
\end{proof}

\begin{cor}\label{conformkerH}According to the previous proposition we see   that  

$f\in \textrm{Ker}(\mathcal H_a ) \cap C^1(a^{-1}( \mathcal T(\Omega)), \mathbb H)$ if and only if $(\mathcal A\circ a ) (f\circ a^{-1}\circ \mathcal T \circ a) \in \textrm{Ker} (\mathcal H_a)$.
\end{cor}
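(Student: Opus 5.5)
The plan is to read Corollary \ref{conformkerH} directly off the operator identity in Proposition \ref{ConfCovPropL}. That identity shows that applying $\mathcal H_a$ to the transformed function $(\mathcal A\circ a)(g\circ\mathcal T\circ a)$ equals, up to left multiplications and compositions with diffeomorphisms, $\mathcal H_a$ applied to $g\circ a$, pulled back by $\mathcal T$. Concretely, I take $f$ on $a^{-1}(\mathcal T(\Omega))$ and set $g=f\circ a^{-1}$, so that $g\in C^1(\mathcal T(\Omega),\mathbb H)$ and $g\circ a = f$. I then evaluate the pointwise form of the identity obtained at the end of the proof of Proposition \ref{ConfCovPropL}:
\begin{align*}
 - \underline{q}\, \mathcal H_a [ (\mathcal A_{\mathcal T}\circ a ) (g\circ\mathcal T \circ a) ]\circ a^{-1} (q) = \mathcal B_{\mathcal T} (q)\, \underline{\mathcal T}(q)\, ( \mathcal H_a [ f]\circ a^{-1})\circ \mathcal T(q),
\end{align*}
for all $q\in\Omega$. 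Note that $g\circ\mathcal T\circ a = f\circ a^{-1}\circ\mathcal T\circ a$, which is exactly the function appearing in the statement.

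For the forward direction, if $\mathcal H_a[f]=0$ on $a^{-1}(\mathcal T(\Omega))$, then the right-hand side vanishes for every $q\in\Omega$. Hence $\underline q\,\mathcal H_a[(\mathcal A\circ a)(f\circ a^{-1}\circ\mathcal T\circ a)](a^{-1}(q))=0$. Since $\Omega\subset(s,t)\times V$ should be taken away from the real axis, as in the quaternionic setting, $\underline q\neq 0$ is invertible in $\mathbb H$. Because $a$ is a bijection onto its image, the transformed function lies in $\textrm{Ker}(\mathcal H_a)$ on $a^{-1}(\Omega)$.

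For the converse, if the left-hand side vanishes, I need to cancel $\mathcal B_{\mathcal T}(q)\,\underline{\mathcal T}(q)$. The factor $\mathcal B_{\mathcal T}(q)=\|c\|\|b-ac^{-1}d\|\bar c\,(q-ac^{-1})^{-2}$ is a nonzero quaternion whenever $c\neq0$ and $b-ac^{-1}d\neq0$. The factor $\underline{\mathcal T}(q)$ is nonzero as long as $\mathcal T(\Omega)$ avoids the real axis. Since $\mathcal T$ is a bijection from $\Omega$ onto $\mathcal T(\Omega)$, this yields $\mathcal H_a[f]=0$ on $a^{-1}(\mathcal T(\Omega))$.

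The main obstacle is these invertibility and nondegeneracy conditions rather than any computation. The argument needs $c\neq0$ (otherwise $\mathcal A_{\mathcal T}=\bar c$ vanishes and the statement trivializes), $q\neq ac^{-1}$ on $\Omega$, and $\Omega,\mathcal T(\Omega)\subset\mathbb H\setminus\mathbb R$. I would state these as implicit standing assumptions inherited from Proposition \ref{ConfCovPropG}, together with the avoidance of the real axis used in the quaternionic Borel--Pompeiu setting.
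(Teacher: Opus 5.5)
Your proposal is correct and follows the paper's own route, which is a one-line reading of the pointwise identity in Proposition~\ref{ConfCovPropL} with $g=f\circ a^{-1}$ and cancellation of the left factors; your explicit use of $c\neq 0$ and $b-ac^{-1}d\neq 0$ for that cancellation is more careful than the paper, and the sign mismatch in the paper's displayed identity does not matter for kernels. Your extra assumptions that $\Omega,\mathcal T(\Omega)$ avoid $\mathbb R$ and that $q\neq ac^{-1}$ can be dropped (here $a,b,c,d$ are the M\"obius coefficients): at any $x$ with $\underline a(x)=0$ every coefficient of $\mathcal H_a$ vanishes, so $\mathcal H_a[\cdot](x)=0$ automatically, and $\mathcal B_{\mathcal T}$ is evaluated at $y=\mathcal T(q)$, where $\mathcal T(q)-ac^{-1}=(b-ac^{-1}d)(cq+d)^{-1}\neq 0$.
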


\begin{cor}\label{ConfCovPropDDD} (Conformal properties of $  {D}_u$).  
	Consider all hypothesis of Proposition \ref{ConfCovPropL}. If $u_i = \sum_{j=1}^3 a_j  \frac{\partial (a^{-1})_i}{\partial y_j}\circ a $ for $i=1,2, 3$ and $f \in C^1(a^{-1}({\mathcal T}(\Omega)),\mathbb H)$. Then  
	\begin{align*}
		& - \underline{q} (1  +{\underline q} ) 
		{\frac {\partial  (\mathcal A\circ a ) (f\circ a^{-1}\circ\mathcal T \circ a)}{\partial y_0}}\circ a^{-1} (q)  \\
		& \ \   + \underline{q}{ {D}_u [ (\mathcal A\circ a ) (f\circ a^{-1}\circ\mathcal T \circ a)]  \circ a^{-1} (q)  } \\
		= & \mathcal {B_{\mathcal T}} (q) \ \underline{\mathcal T}(q)\ 
		(1  +{\underline  T}(q) ) 
		{\frac {\partial f}{\partial y_0}}\circ a^{-1}   \circ \mathcal T(q) 
		-  \mathcal {B_{\mathcal T}} (q) \ \underline{\mathcal T}(q) {  {D}_u [f]}\circ a^{-1}   \circ \mathcal T(q) 
	\end{align*}
	for all $q\in \Omega$.
\end{cor}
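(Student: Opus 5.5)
The plan is to read Corollary \ref{ConfCovPropDDD} as Proposition \ref{ConfCovPropL} rewritten in pointwise form, with $\mathcal H_a$ replaced through the identity \eqref{HDquaternionic}, namely $\mathcal H_a(h) = (1+\underline a)\frac{\partial h}{\partial y_0} - D_u h$. This identity holds for every $h\in C^1$ once $u_i=\sum_j a_j \frac{\partial (a^{-1})_i}{\partial y_j}\circ a$, as recorded in Remark \ref{remDHQuater}. No new analysis is needed; everything reduces to bookkeeping of compositions with $a$ and $a^{-1}$.

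The first step is to fix $f\in C^1(a^{-1}(\mathcal T(\Omega)),\mathbb H)$ and set $g=f\circ a^{-1}\in C^1(\mathcal T(\Omega),\mathbb H)$. Applying the pointwise identity obtained at the end of the proof of Proposition \ref{ConfCovPropL} to this $g$ gives, for all $q\in\Omega$,
\begin{align*}
-\underline q\, \mathcal H_a\big[(\mathcal A\circ a)(f\circ a^{-1}\circ \mathcal T\circ a)\big]\circ a^{-1}(q) = \mathcal B_{\mathcal T}(q)\,\underline{\mathcal T}(q)\,\big(\mathcal H_a[f]\circ a^{-1}\big)\circ\mathcal T(q),
\end{align*}
since $g\circ a = f$.

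The second step is to substitute \eqref{HDquaternionic} on each side. On the left, take $h=(\mathcal A\circ a)(f\circ a^{-1}\circ\mathcal T\circ a)$ and evaluate at $a^{-1}(q)$. The factor $\underline a(a^{-1}(q))$ equals $\underline q$, because $a$ acts as the identity on the real part and as $\underline a$ on the vector part. This produces the two left-hand terms $-\underline q(1+\underline q)\partial_{y_0}h\circ a^{-1}(q)+\underline q\, D_u h\circ a^{-1}(q)$. On the right, take $h=f$ and evaluate at $a^{-1}(\mathcal T(q))$. The factor becomes $1+\underline{\mathcal T}(q)$, which yields the stated difference of the $\partial_{y_0}$ term and the $D_u$ term, each multiplied on the left by $\mathcal B_{\mathcal T}(q)\underline{\mathcal T}(q)$. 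Here $\partial/\partial y_0$ coincides with $\partial/\partial x_0$, since $y_0=x_0$.

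The only point needing care is the sign and order bookkeeping, because the quaternionic multiplication is noncommutative. The substitution \eqref{HDquaternionic} must respect the left placement of $\underline q$ and of $\mathcal B_{\mathcal T}\underline{\mathcal T}$. One must also check that the composition $\underline a\circ a^{-1}$ really reduces to the vector part of the argument. Neither issue is a genuine obstacle, so the corollary follows by direct rewriting.
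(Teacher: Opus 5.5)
Your argument is the paper's argument step for step. You specialize the pointwise identity from the proof of Proposition \ref{ConfCovPropL} to $g=f\circ a^{-1}$ and substitute \eqref{HDquaternionic}, using $\underline a(a^{-1}(q))=\underline q$ and $\underline a(a^{-1}(\mathcal T(q)))=\underline{\mathcal T}(q)$. Those evaluations and the substitution itself are fine.

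The problem is the identity you import, and it sits exactly in the sign bookkeeping you called harmless. Proposition \ref{HGQuaternions} gives $G[F](p)=-\underline p\,\mathcal H_a[F\circ a](a^{-1}(p))$. This conversion has to be applied on \emph{both} sides of $G[\mathcal A_{\mathcal T}(g\circ\mathcal T)]=\mathcal B_{\mathcal T}\,(G[g]\circ\mathcal T)$ from Proposition \ref{ConfCovPropG}. Doing so at $q\in\Omega$, with $h=(\mathcal A\circ a)(g\circ\mathcal T\circ a)$, yields
\begin{align*}
-\underline q\,\mathcal H_a[h]\circ a^{-1}(q)=-\mathcal B_{\mathcal T}(q)\,\underline{\mathcal T}(q)\,\big(\mathcal H_a[g\circ a]\circ a^{-1}\big)\circ\mathcal T(q).
\end{align*}
So the two minus signs cancel. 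This agrees, as far as signs go, with the operator identity in the statement of Proposition \ref{ConfCovPropL}, where both multipliers $\underline I$ and $\mathcal B_{\mathcal T}\underline{\mathcal T}$ enter with a plus sign. The pointwise version written in that proof, which is the one you quote, keeps $-\underline q$ on the left but drops the minus sign on the right.

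Consequently, if you carry out your substitution from Propositions \ref{HGQuaternions} and \ref{ConfCovPropG} as stated, the corollary's left-hand side equals
\[
-\mathcal B_{\mathcal T}(q)\,\underline{\mathcal T}(q)\,(1+\underline{\mathcal T}(q))\,\frac{\partial f}{\partial y_0}\circ a^{-1}\circ\mathcal T(q)+\mathcal B_{\mathcal T}(q)\,\underline{\mathcal T}(q)\,D_u[f]\circ a^{-1}\circ\mathcal T(q).
\]
That is the negative of the stated right-hand side. The displayed formula as stated would then hold only at points where $\mathcal H_a[f]\circ a^{-1}\circ\mathcal T$ vanishes, whenever $\mathcal B_{\mathcal T}(q)\underline{\mathcal T}(q)\neq 0$.

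The paper's own proof takes the same step and inherits the same slip, so your strategy is not at fault. Still, the argument is only sound once you derive the pointwise relation yourself from the operator identity and correct the sign of the conclusion accordingly. You cannot dismiss this check as not a genuine obstacle.
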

\begin{proof}
	Use Proposition \ref{ConfCovPropL} and  \eqref{HDquaternionic} 
	to obtain 
	\begin{align*}
		& - \underline{q} (1  +{\underline q} ) 
		{\frac {\partial  (\mathcal A\circ a ) (g\circ\mathcal T \circ a)}{\partial y_0}}\circ a^{-1} (q) 
		+ \underline{q}{ {D}_u [ (\mathcal A\circ a ) (g\circ\mathcal T \circ a)]  \circ a^{-1} (q)  } \\
		= & \mathcal {B_{\mathcal T}} (q) \ \underline{\mathcal T}(q)\ 
		(1  +{\underline  T}(q) ) 
		{\frac {\partial g\circ a}{\partial y_0}}\circ a^{-1}   \circ \mathcal T(q) 
		\\
		&  \ \  -  \mathcal {B_{\mathcal T}} (q) \ \underline{\mathcal T}(q) {  {D}_u [g\circ a]}\circ a^{-1}   \circ \mathcal T(q) 
	\end{align*}
	for all $q\in \Omega$ for all   $g \in C^1({\mathcal T}(\Omega),\mathbb H)$. 
\end{proof}

\begin{cor} (Invariant conformal property of  Ker$(  {D}_u )$).  
	Consider all hypothesis of Proposition \ref{ConfCovPropL}. Denote  $u_i =   \sum_{j=1}^3 a_j   \frac{\partial (a^{-1})_i}{\partial y_j}\circ a $ for $i=1,2, 3$. If $f \in \textrm{Ker}(  {D}_u) \cap C^1(a^{-1}({\mathcal T}(\Omega)),\mathbb H)$  then  
	\begin{align*}
		&  \underline{q}{ {D}_u [ (\mathcal A\circ a ) (f\circ a^{-1}\circ\mathcal T \circ a)]  \circ a^{-1} (q)  } \\
		= & \mathcal {B_{\mathcal T}} (q) \ \underline{\mathcal T}(q)\ 
		(1  +{\underline  T}(q) ) 
		{\frac {\partial f}{\partial y_0}}\circ a^{-1}   \circ \mathcal T(q) \\
		& +   \underline{q} (1  +{\underline q} ) 
		{\frac {\partial  (\mathcal A\circ a ) (f\circ a^{-1}\circ\mathcal T \circ a)}{\partial y_0}}\circ a^{-1} (q) , 
	\end{align*}
	for all $q\in \Omega$.
\end{cor}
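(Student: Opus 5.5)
The plan is to specialize Corollary \ref{ConfCovPropDDD} to $f\in\textrm{Ker}(D_u)$ and rearrange. Under the standing choice $u_i=\sum_{j=1}^3 a_j\frac{\partial (a^{-1})_i}{\partial y_j}\circ a$, Corollary \ref{ConfCovPropDDD} gives, for every $f\in C^1(a^{-1}(\mathcal T(\Omega)),\mathbb H)$ and every $q\in\Omega$, an identity relating four terms:
\begin{itemize}
\item $\underline q\, D_u[(\mathcal A\circ a)(f\circ a^{-1}\circ\mathcal T\circ a)]\circ a^{-1}(q)$;
\item $\underline q(1+\underline q)\,\partial_{y_0}[\cdots]\circ a^{-1}(q)$;
\item $\mathcal B_{\mathcal T}(q)\,\underline{\mathcal T}(q)(1+\underline{\mathcal T}(q))\,\partial_{y_0}f\circ a^{-1}\circ\mathcal T(q)$;
\item $\mathcal B_{\mathcal T}(q)\,\underline{\mathcal T}(q)\,D_u[f]\circ a^{-1}\circ\mathcal T(q)$.
\end{itemize}

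First, since $f\in\textrm{Ker}(D_u)$ on $a^{-1}(\mathcal T(\Omega))$, the composition $D_u[f]\circ a^{-1}\circ\mathcal T$ vanishes at every $q\in\Omega$, because $\mathcal T(q)\in\mathcal T(\Omega)$. So the term $-\mathcal B_{\mathcal T}(q)\,\underline{\mathcal T}(q)\,D_u[f]\circ a^{-1}\circ\mathcal T(q)$ drops out of the right-hand side.

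Second, we move the term $-\underline q(1+\underline q)\,\partial_{y_0}[(\mathcal A\circ a)(f\circ a^{-1}\circ\mathcal T\circ a)]\circ a^{-1}(q)$ from the left-hand side to the right-hand side. This gives exactly the claimed formula. No multiplicative inversion is needed, only additive rearrangement in $\mathbb H$, so noncommutativity causes no trouble.

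The only point requiring care is bookkeeping of domains. One must check two things:
\begin{itemize}
\item the function $(\mathcal A\circ a)(f\circ a^{-1}\circ\mathcal T\circ a)$ is $C^1$ on $a^{-1}(\Omega)$, so that Corollary \ref{ConfCovPropDDD} applies; this follows from $\Omega,\mathcal T(\Omega)\subset (s,t)\times V$ and $a$ being a diffeomorphism;
\item the vanishing of $D_u f$ is used at the points $a^{-1}(\mathcal T(q))$, which lie in the domain of $f$.
\end{itemize}
I expect this domain check to be the only (mild) obstacle; the rest is immediate from Corollary \ref{ConfCovPropDDD}.
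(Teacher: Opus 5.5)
Your proposal is correct and follows the paper's intended route. The paper gives this corollary no separate proof: it is the specialization of Corollary \ref{ConfCovPropDDD} to $D_u f=0$ on $a^{-1}(\mathcal T(\Omega))$, followed by moving the $\partial_{y_0}$-term to the right-hand side, which is exactly what you do, including the domain check for the points $a^{-1}(\mathcal T(q))$.
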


\begin{rem}
The conformal properties of $G$, $\mathcal H_a$ and $D_u$ are   certain generalizations of the well-known chain rule  of complex  analysis. Furthermore, these properties often induce some isomorphism between function modules induced by these operators and associated with conformally equivalent domains.
\end{rem}

\begin{rem}
	The  operator $\mathcal H_{a,r}$  has similar properties to $\mathcal  H_a$ and the properties it helps us discover about the $  {D}_u$ are deeply similar, so the right case is omitted.
	
	On the other hand, if $u_i =  \sum_{j=1}^3 a _j   \frac{\partial (a^{-1})_i}{\partial y_j}\circ a  $ for $i=1,2, 3$. Then 
	$\mathcal H_a(f) = 0$ if and only if ${  {D}_u f} = (1  +{\vec a}) {\frac {\partial f}{\partial y_0}}$, on $(s,t)\times U$. Therefore all properties given above about  $\textrm{Ker}(\mathcal H_a)$ under hypothesis  
	$$u_i =   \sum_{j=1}^3 a_j   \frac{\partial (a^{-1})_i}{\partial y_j}\circ a , \quad \textrm{for} \quad i=1,2, 3$$ are the same properties of the quaternionic right module  $ \mathcal  D$   formed by  $f\in C^{1}((s,t)\times U, \mathbb H)$
	such that $ { {D}_u f} = (1 +{\vec a}) {\frac {\partial f}{\partial y_0}}$ on $(s,t)\times U$, i.e., the  quaternionic right module $\mathcal D$ has deeply similar properties to commented in Propositions \ref{HGQuaternions}, \ref{SerKerHa}, Theorem \ref{thmKerHASSD}, and Corollaries \ref{cor_quaternionic_Borel-Pompeiu_form_L_a}, \ref{corstokesH}, \ref{conformkerH}. 
\end{rem}

\section{Conclusions and future works}
The study of the $G$ operator has been generalized via the theory found for $\mathcal H_a$  and that in turn under certain relations we can obtain information about the material derivative ${ D}_u$ either in the quaternionic case or in Clifford analysis.

It is important to note that the results of the quaternionic analysis on the operator $G$ can be discovered in Clifford analysis in a very similar way and thus find properties of the n-dimensional material  derivative that will be similar to the quaternionic material derivative.

We shall focus on describing important properties of $\mathcal H_a$ and ${  D}_u$ in order to develop future applications of their properties in some differential equations with many applications to physics, among other phenomena.  

Another important future work is the   development of the theory of functions induced by the operators of the form  $H_{a,b}$ and with it a generalization of the theory of slice monogeinc functions among other.


\begin{thebibliography}{99}
\bibitem{A1} D. Alpay, K. Diki, I. Sabadini. \textit{On the global operator and Fueter mapping theorem for slice polyanalytic functions}. Anal. Appl. (Singap.) 19, no. 6, 941--964,  (2021) 

\bibitem{A2} D. Alpay, K. Diki, M. Vajiac. \textit{New Fueter-type variables associated to the global operator in the quaternionic case}. Publ. Res. Inst. Math. Sci. 60, no. 4, 859--891, (2024) 

\bibitem{A3} D. Alpay, I. Cho, M. Vajiac. \textit{Scaled global operators and Fueter variables on non-zero scaled hypercomplex numbers}. Adv. Appl. Clifford Algebr. 34, no. 5, Paper No. 53, 61 pp.  (2024)

\bibitem{B} A.V. Baev. \textit{Solving the Navier-Stokes equation for a viscous incompressible fluid in an $n$-dimensional bounded region and in the entire space $\mathbb R^n$}. Comput. Math. Model. 33, no. 3, 255--272,  (2022)

\bibitem{Bat} G. K. Batchelor. \textit{An Introduction to Fluid Dynamics}. Cambridge University Press, pp. 72-73, (1967)

\bibitem{cgs} F. Colombo, G. Gentili,  I. Sabadini, D. Struppa. \textit{Extension results for slice regular functions of a quaternionic variable}. Adv. Math. 222, no. 5, 1793--1808,  (2009)

\bibitem{CSS} F. Colombo, I. Sabadini, D.C. Struppa, \textit{Noncommutative functional calculus. Theory and applications of slice hyperholomorphic functions}. Progress in Mathematics, 289. Birkhäuser/Springer Basel AG, Basel, vi+221 pp. (2011)

\bibitem{CZ} C. Ding, Z. Xu. \textit{Invariance of iterated global differential operator for slice monogenic functions}. Comput. Methods Funct. Theory 25, no. 3, 735--752, (2025)

\bibitem{GS} G. Gentili, D.C. Struppa, \textit {A new theory of regular functions of a quaternionic variable}. Adv. Math. 216, no. 1, 279--301,  (2007)

\bibitem{CS} F. Colombo, F. Sommen. \textit {Distributions and the global operator of slice monogenic functions}. Complex Anal. Oper. Theory 8, no. 6, 1257--1268,  (2014) 

\bibitem{GlobalOp} F. Colombo, J. O. Gonz\'alez-Cervantes, I. Sabadini. \textit{A nonconstant coefficients differential operator associated to slice monogenic functions}. Trans. Amer. Math. Soc. 365, no. 1, 303--318,  (2013) 

\bibitem{CSS9} F. Colombo, I. Sabadini, D.C. Struppa. \textit{Slice monogenic functions}. Israel J. Math. 171, 385--403,  (2009) 

\bibitem{CC} C. Sydney, T. G., Cowling. \textit{The mathematical theory of nonuniform gases. An account of the kinetic theory of viscosity, thermal conduction and diffusion in gases}. In co-operation with D. Burnett. Revised reprint of the third edition. With a foreword by Carlo Cercignani. Cambridge Mathematical Library. Cambridge University Press, Cambridge, (1990) 

\bibitem{F} H. G. Fortak. \textit{Material derivatives of higher dimension in geophysical fluid dynamics}. Meteorologische Zeitschrift Vol. 13 No. 6 , p. 499-510, (2004)

\bibitem{G} J. O. Gonz\'alez-Cervantes. \textit{On Cauchy integral theorem for quaternionic slice regular functions}. Complex Anal. Oper. Theory 13, no. 6, 2527--2539,  (2019) 

\bibitem{GG1} J. O. Gonz\'alez-Cervantes, D. Gonz\'alez-Campos. \textit{The global Borel-Pompieu-type formula for quaternionic slice regular functions}. Complex Var. Elliptic Equ. 66, no. 5, 721--730,  (2021) 

\bibitem{GG2} J. O. González-Cervantes, D. González-Campos. \textit{On the conformal mappings and the global operator $G$}. Adv. Appl. Clifford Algebr. 31, no. 1, Paper No. 6, 13 pp.  (2021) 

\bibitem{GPG} J. O. González-Cervantes, J. E. Paz-Cordero, D. González-Campos. \textit{On some quaternionic series}. Adv. Appl. Clifford Algebr. 33, no. 4, Paper No. 45, 17 pp.  (2023)

\bibitem{gp} R. Ghiloni, A. Perotti, \textit{Volume Cauchy formulas for slice functions on real associative *-algebras}. Complex Var. Elliptic Equ. 58, no. 12, 1701--1714,  (2013)

\bibitem{gpr} R. Ghiloni, A. Perotti, V. Recupero, \textit{Noncommutative Cauchy integral formula}. Complex Anal. Oper. Theory 11, no. 2, 289--306,  (2017)  

\bibitem{GR}  R. A. Granger, R.A. \textit{Fluid Mechanics}. Courier Dover Publications. (1995) 

\bibitem{GS1}  K. G\"urlebeck, W. Spr\"ossig, \textit{Quaternionic analysis and elliptic boundary value problems.} Birkha\"user Verlag, (1990)

\bibitem{GS2}  K. G\"urlebeck, W. Spr\"ossig, \textit{Quaternionic and Clifford calculus for physicists and engineers}. John Wiley and Sons. (1997)

\bibitem{gssbook} G. Gentili, C Stoppato, D.C. Struppa. \textit {Regular functions of a quaternionic variable}, Springer Monographs in Mathematics, Springer, Berlin-Heidelberg, (2013)

\bibitem{GP} R. Ghiloni, A. Perotti. \textit {Global differential equations for slice regular functions}. Math. Nachr. 287, no. 5-6, 561--573,  (2014)

\bibitem{OHJ} H. Ockendon, J.R. Ockendon. \textit {Waves and compressible flow}. Texts in Applied Mathematics, 47. Springer-Verlag, New York, (2004)	

\bibitem{PK}  W.J. Prosnak, Z.J. Kosma. \textit{On a new method for numerical solution of the Navier-Stokes equations}. Acta Mech. 89, no. 1-4, 45--63,  (1991)

\bibitem{Z} L. Zhang. \textit{Properties of solutions of $n$-dimensional incompressible Navier-Stokes equations}. Ann. Appl. Math. 35, no. 4, 392--448,  (2019)

\end{thebibliography}
\end{document}